\newlength{\bibitemsep}\setlength{\bibitemsep}{.2\baselineskip plus .05\baselineskip minus .05\baselineskip}
\newlength{\bibparskip}\setlength{\bibparskip}{0pt}
\let\oldthebibliography\thebibliography
\renewcommand\thebibliography[1]{%
  \oldthebibliography{#1}%
  \setlength{\parskip}{\bibitemsep}%
  \setlength{\itemsep}{\bibparskip}%
}
\newcommand\cC{{\mathcal C}}
\newcommand\cF{{\mathcal F}}
\newcommand\cO{{\mathcal O}}
\newcommand\cS{{\mathcal S}}
\newcommand\cW{{\mathcal W}}
\newcommand{\Vol}{\mathrm{Vol}\,}
\newcommand{\VolL}{\mathrm{Vol}\,L}
\newcommand{\ord}{\mathrm{ord}}
\newcommand{\Val}{\mathrm{Val}}
\newcommand{\lct}{\mathrm{lct}}
\newcommand{\CC}{\mathbb {C}}
\newcommand{\NN}{{\mathbb N}}
\newcommand{\PP}{{\mathbb P}}
\newcommand{\QQ}{{\mathbb Q}}
\newcommand{\RR}{{\mathbb R}}
\newcommand{\ZZ}{{\mathbb Z}}
\DeclareMathOperator{\co}{co}
\theoremstyle{plain}
\newtheorem{theorem}{Theorem}[section]
\newtheorem{proposition}[theorem]{Proposition}
\newtheorem{prop}[theorem]{Proposition}
\newtheorem{lemma}[theorem]{Lemma}
\newtheorem{claim}[theorem]{Claim}
\newtheorem{corollary}[theorem]{Corollary}
\newtheorem{conjecture}[theorem]{Conjecture}
\newtheorem{problem}[theorem]{Problem}
\theoremstyle{definition}
\newtheorem{definition}[theorem]{Definition}
\newtheorem{remark}[theorem]{Remark}
\newtheorem{example}[theorem]{Example}
\def\K{K\"ahler }
\def\KE{K\"ahler--Einstein }
\newcommand{\beq}{\begin{equation}}
\newcommand{\eeq}{\end{equation}}
\newcommand{\bpf}{\begin{proof}}
\newcommand{\epf}{\end{proof}}
\newcommand{\baligned}{\begin{aligned}}
\newcommand{\ealigned}{\end{aligned}}
\newcommand{\bdefn}{\begin{definition}}
\newcommand{\edefn}{\end{definition}}
\newcommand{\bremark}{\begin{remark}}
\newcommand{\eremark}{\end{remark}}
\newcommand{\bconj}{\begin{conjecture}}
\newcommand{\econj}{\end{conjecture}}
\newcommand{\bcor}{\begin{corollary}}
\newcommand{\ecor}{\end{corollary}}
\newcommand{\blem}{\begin{lemma}}
\newcommand{\elem}{\end{lemma}}
\newcommand{\bclaim}{\begin{claim}}
\newcommand{\eclaim}{\end{claim}}
\newcommand{\bprob}{\begin{problem}}
\newcommand{\eprob}{\end{problem}}
\newcommand{\bprop}{\begin{proposition}}
\newcommand{\eprop}{\end{proposition}}
\newcommand{\bthm}{\begin{theorem}}
\newcommand{\ethm}{\end{theorem}}
\def\lb#1{\label{#1}}
\def\ra{\rightarrow}
\def\q{\quad}
\def\disp{\displaystyle}
\def\eps{\epsilon}
\font\smlsev=cmr7
\def\fin{\operatorname{fin}}
\def\Div{\operatorname{div}}
\def\Val{\operatorname{Val}}
\newcommand{\ValX}{\Val_X}
\newcommand{\ValXfin}{{\Val}_X^{\fin}}
\newcommand{\ValXdiv}{{\Val}_X^{\Div}}
\title{
Asymptotics of discrete Okounkov bodies and thresholds}
\author{Chenzi Jin, Yanir A. Rubinstein, Gang Tian}
\date{January 25, 2025}
\begin{document}

\maketitle

\begin{abstract}
This article initiates the study of discrete Okounkov bodies
and higher-dimensional Weierstrass gap phenomena,
with applications to asymptotic analysis
of stability and global log canonical thresholds. 

\end{abstract}


\section{Introduction}

This article has three main goals. First, to initiate the study of {\it discrete Okounkov bodies}
and their asymptotic behavior. Second, to 
study asymptotics of algebraic geometric invariants that are fundamental in the study of K-stability and 
canonical \K metrics, generally referred to as {\it stability  thresholds} and {\it global
log canonical thresholds}.
The second goal is achieved partly as an application of the first.
Third, to point out that, surprisingly, 
estimates on thresholds follow from a new
higher-dimensional generalization of classical Weierstrass gap theory.

Towards these goals we introduce several new ideas to the study of asymptotic
invariants in algebraic geometry: volume quantiles from statistics to study the big cone, joint large $(k,m(k))$-limits 
using Grassmannians in linear systems to unify the study of 
global
log canonical thresholds and stability thresholds, 
a notion of collapsing Okounkov bodies inspired by Riemannian convergence theory,
and discrete versions of tangent cones and blow-up estimates for collapsing discrete Okounkov bodies.
This also leads to new valuative criteria for stability on the big cone.

\subsection{Asymptotics of stability thresholds}
\lb{DeltaSubSec}
Stability thresholds provide sufficient and/or necessary conditions  
for K-stability and Chow stability. They have been studied extensively in the past
decade. These invariants can also be characterized analytically as {\it coercivity thresholds} for the corresponding energy functionals whose critical points are K\"ahler--Einstein or balanced metrics (or their
logarithmic generalizations).
Despite the absolute centrality of these invariants to moduli problems in algebraic geometry
and to PDE arising in complex geometry, it seems very little is known about their asymptotic behavior in $k$. Here, $k$ is the power of the line bundle, with $1/k$ playing the r\^ole of
Planck's constant $\hbar$ in geometric quantization.

Let $L$ be a big line bundle on a projective variety $X$ with
klt singularities.
Stability thresholds, known as $\delta_k$-invariants 
(or still as 
basis log canonical thresholds), were introduced by Fujita--Odaka in 2016 to measure the complex singularity exponent of the ``most singular" basis-type divisor in $|kL|$ \cite{FO18}. By a theorem of Rubinstein--Tian--Zhang these invariants coincide with the analytic coercivity thresholds of the quantized
Ding functional and characterize existence of $k$-balanced metrics
\cite{RTZ21}.

According to a theorem of Blum--Jonsson the limit in $k$ of these invariants exists: it is the celebrated Fujita--Odaka $\delta$-invariant with 
the Fujita--Li valuative criterion 
$\delta>1$ characterizing uniform
K/Ding-stability \cite{Li17,F19,BX19},\cite[Theorem 0.3]{FO18},\cite[Theorems B, 4.4]{BJ20},
\cite{BermanBJ} (see
also \cite{Berman}). 
Beyond the foundational results of Fujita--Odaka and Blum--Jonsson, it seems almost nothing is known about 
the asymptotic behavior of these invariants in~$k$. 
In addition, the work of Blum--Liu has shown that
asymptotics of $\delta_k$-invariants is crucial for results on lower-semi-continuity
of the $\delta$-invariant, i.e., openness of K-(semi)stability \cite{BL18,BL22}.

\bprob
\lb{AsympProb}
Determine the asymptotics of $\delta_k$ in $k$.
\eprob

Recently, two of us solved Problem \ref{AsympProb}
in the toric and ample case \cite{JR2}. In this article, using
 completely different methods, we give the first general result on 
Problem \ref{AsympProb}, i.e.,
for an arbitrary big line bundle on a general variety.
It seems new already for surfaces.

\bthm
\lb{MainDeltaThm}
Let $X$ be a normal
projective variety
with  klt singularities, and $L$ a big line bundle on $X$.
If $\delta$ is computed by a divisorial valuation (Definition \ref{computeddivisorialDef}),
$$
    \delta_k=\delta+O(1/k).
$$
\ethm

By results in the toric case \cite[Corollary 2.12]{JR2}, 
the rate $O(1/k)$ in Theorem \ref{MainDeltaThm} is optimal. 
The condition ``$\delta$ is computed by a divisorial valuation" 
(Definition \ref{computeddivisorialDef})
is a natural
one in the literature (e.g., the Fujita--Li valuative criterion for 
$\delta$ is phrased only in terms
of divisorial valuations \cite{Li17,F19,BX19},\cite[Theorem 2.9]{LXZ22}), and there
are no known examples where it does not hold.
By a deep theorem of Liu--Xu--Zhuang it holds in rather great generality,
when $L=-K_X-D$ is ample, $(X,D)$ is klt, and $\delta<\frac{n+1}{n}$ \cite[Theorem 1.2]{LXZ22}.
In fact, the proof of Theorem \ref{MainDeltaThm} only requires this condition
for the upper bound: see Theorem \ref{DeltatauThm} for the stronger statement.

Theorem \ref{MainDeltaThm} is related to a seminal result of Blum--Jonsson who
showed that $\delta$ is always computed by a valuation when $L$ is ample \cite[Theorem E]{BJ20}.
A key step in their proof is to modify the $v$-graded linear series on $L$ 
(Definition
\ref{GradedLinearSerierDef}) by certain multiplier ideals in such a way that it can be `lifted' to a
birational model over $X$. For this modified series they obtain $O(1/k)$
estimates for a twisted expected vanishing order \cite[Theorem 5.3]{BJ20}, and, ultimatley, through 
a tour de force involving deep facts on multiplier ideal sheaves they are
able to obtain the necessary compactness in the space of valuations and guarantee a
minimizing valuation exists. Roughly, Theorem \ref{MainDeltaThm} is a sort of converse
(in the more general big case but under the more restrictive divisorial assumption),
starting by assuming that a minimizing valuation exists and showing the optimal
$O(1/k)$ asymptotics for the expected vanishing order 
on~the~{\it actual}~$v$-graded~linear~series. 

\subsection{Asymptotics of global log canonical thresholds}
\lb{AlphaSubSec}
Our approach to Problem \ref{AsympProb} 
allows us to tackle
another central open problem, concerning another classical family of
invariants, the $k$-th {\it global log canonical 
thresholds} 
capturing the ``most singular" divisor
in $|kL|$,
studied by Shokurov and Cheltsov in the algebraic geometry literature starting in the 1990's.
By a theorem of Demailly, these invariants coincide with 
Tian's analytic $\alpha_k$-invariants introduced in 1988 in the context
of the \KE problem \cite{Tian87,Tian90,Tian90-2,CS08}.

\bprob
\lb{AsympAlphaProb} 
Determine the asymptotics of $\alpha_k$ in $k$.
\eprob

Problem \ref{AsympAlphaProb} 
can be considered as a refinement of 
Tian's Stabilization Problem posed in 1988 asking whether the $\alpha_k$
become eventually constant in $k$ when $L=-K_X$ \cite{Tian90}.
It is also very loosely related to the
ACC conjecture in algebraic geometry 
that stipulates (roughly) that the set of all $\alpha_k$ on varieties of
a fixed dimension contains no infinite strictly increasing sequence
\cite[Remark 3.5]{DK01},
\cite{DeFernexMustata,DeFernexEinMustata}.
Finally, Demailly--Koll\'ar studied a {\it local}
version of $\alpha_k$ on $\CC^n$ for which they obtained $O(1/k)$
asymptotics,
$$
\big|\lct_x (f)-\lct_x (f_k)\big|\le \frac n{k+1},
$$
with $k$ being the Taylor series truncation instead of the power of the line bundle
\cite[(0.2.6), Theorem 2.9]{DK01}. This estimate
plays a key r\^ole in establishing their celebrated semi-continuity of complex singularity exponents,
that also implies 
$\inf_k\alpha_k=\alpha$
for the actual $\alpha_k$-invariants \cite[Theorem A.3]{CS08}.

Surprisingly, there are still
very few results on Problem \ref{AsympAlphaProb}.
According to a theorem of Demailly and Shi
the limit in $k$ exists \cite[Theorem A.3]{CS08}, \cite[Theorem 2.2]{Shi10}, and equals Tian's $\alpha$-invariant
(with $\alpha>\frac n{n+1}$ being a classical sufficient condition
for K-stability \cite{Tian87}; we generalize this in three different ways below). 
In the non-exceptional log
Fano setting (i.e., $\alpha\le 1$) a celebrated theorem of Birkar states that there is some $\ell\in\NN$ so that $\alpha_{k\ell}=\alpha$ for all $k\in\NN$ \cite[Theorem 1.7]{Birkar2022}.
The toric Fano case is the only setting where the stabilization problem is completely solved \cite{JR1}
(in addition, the case of smooth del Pezzo surfaces
can be deduced from \cite{Cheltsov08}). 
A beautiful and difficult theorem of Blum--Jonsson is the only general result on Problem \ref{AsympAlphaProb}, proved using sophisticated arguments involving multiplier ideal sheaves \cite[Corollary 5.2]{BJ20}:

\bthm
\lb{BJAlphaThm}
{\rm (Blum--Jonsson)}
Let $X$ be a normal
projective variety
with klt singularities, and $L$ an ample line bundle on $X$.
Then
$
    \alpha_k=\alpha+O(1/k).
$
\ethm

Our second main result addresses Problem 
\ref{AsympAlphaProb} in the general big case.

\bthm
\lb{MainAlphaThm}
Let $X$ be a normal
projective variety of dimension $n$ with  klt singularities, and $L$ a big line bundle on $X$.
If $\alpha$ is computed by a divisorial valuation (Definition \ref{computeddivisorialDef}),
$$
\alpha_k=\alpha+O(1/k^{1/n}).
$$
If for some $\ell\in\NN$, $\alpha=\alpha_\ell$, then $\alpha_k=\alpha+O(1/k)$.
\ethm

The convergence rate of Theorem \ref{MainAlphaThm} is sometimes
slower than the one in the ample setting. We do not know if this
can be improved. The proof involves delicate a priori estimates (Theorem \ref{maxp1Thm})
inspired by blow-up analysis of tangent cones in Riemannian convergence
theory, and adapted to collapsing {\it discrete} Okounkov bodies, introduced in 
\S\ref{DiscreteOkBSubsec}. In a precise sense, these estimates generalize 
Weierstrass gap theory from the setting of curves, and this is explained in 
\S\ref{DiscreteOkBSubsec}--\ref{WeierSubSec}.
The condition ``$\alpha$ is computed by a divisorial valuation"
is once again natural. It is implied by the existence of an $\ell\in\NN$
such that $\alpha=\alpha_\ell$ (Lemma \ref{birlem}, Corollary \ref{BirkarCor}), and this latter
condition holds by Birkar's theorem in even greater generality than for $\delta$:
whenever $L=-K_X-D$ is big and nef, $(X,D)$ is klt, and $\alpha\le1$ (i.e., $(X,D)$ is non-exceptional in the sense of Shokurov;
according to H\"oring--\'Smiech any smooth (or even canonical
singularities) Fano $X$ with $D=0$ should fall into this class (and this would
follow from the effective nonvanishing conjecture),
and in dimension up to 5 it is also the case \cite{HS23}). 
It is interesting
to note that $\alpha\le1$ is a weaker assumption than $\delta<\frac{n+1}n$
but that is not necessarily relevant to whether $\delta$
or $\alpha$ are computed by divisors. E.g., for all smooth del Pezzo
surfaces $\alpha\le1$, while $\delta\ge 3/2$ for cubic surfaces
with an Eckardt point, generic 7-point blow-ups of $\PP^2$,
and 8-point blow-ups of $\PP^2$ \cite[Table 2.1]{Cheltsov-book}.
Regardless, both $\alpha$ and $\delta$ are always computed
by a divisor in this dimension \cite{Cheltsov08,Cheltsov-book}.

\subsection{A unified approach to thresholds:
volume quantile, Grassmannians, and joint asymptotics}
\lb{TauSubSec}

The {\it volume} function on the big cone is a very central notion in algebraic geometry.
One of the innovations of this article is the idea of introducing {\it volume
quantiles}. This is done in two completely different ways: using Grassmannians
in spaces of sections, and using valuations. Theorem \ref{DeltatauThm} shows
that these coincide in an appropriate {\it joint limit}, i.e., the former
is a quantization of the latter. A subtle point compared to many quantization
results in that here the limit is not simply a so-called large $k$ limit,
but rather a joint one in $(k,m)$ with $m=m(k)$ the Grassmannian parameter.
It follows from our analysis that the notion of a compatible $m$-basis is precisely
the quantization of the volume quantile.

As an important application, this allows us 
to unify the study of $\alpha_k$ and $\delta_k$. 
To that end, a family of invariants $\delta_{k,m}$
that include both as extreme cases is introduced. Theorem 
\ref{MainDeltaThm}
is then obtained as a special case of Theorem \ref{DeltatauThm} below.
Theorem \ref{MainAlphaThm} can be considered as a limiting degenerate
case of 
Theorem \ref{DeltatauThm}. As such it requires careful additional treatment and does not follow from Theorem \ref{DeltatauThm}.

\begin{definition}
\lb{dkNLdeltakmDef}
Let $d_k:=\dim H^0\left(X,kL\right)$, and
$\NN(L):=\{k\in\NN\,:\, d_k>0\}$.
    For $k\in\NN(L)$ and $m\in\{1,\ldots,d_k\}$, let (see Definition
\ref{lctDef}) 
    \begin{equation}\label{delta def}
        \delta_{k,m}:=\inf_{\substack{\left\{s_\ell\right\}_{\ell=1}^m\subset H^0\left(X,kL\right)
\\\mbox{\smlsev linearly independent}}}km\,\lct\left(\sum_{\ell=1}^m\left(s_\ell\right)\right).
    \end{equation}
\end{definition}

We call the $\delta_{k,m}$ {\it Grassmannian stability thresholds}.
They are inspired by the invariants $\alpha_{k,m}$ 
\cite{Tian90-2,JR1} but are quite different. By definition, $\delta_{k,1}=\alpha_k$
while $\delta_{k,d_k}=\delta_k$ (Definition \ref{old notion}). 
We study their 
{\it joint asymptotics} in $(k,m(k)=m_k)$. This allows to study
the asymptotics of $\alpha_k$ and $\delta_k$ simultaneously.
It also yields a 1-parameter family of new invariants 
whose endpoints are $\alpha$ and $\delta$, depending on the 
volume of the linear subsystem, with parameter being the {\it volume quantile} (Definition \ref{ccdfDef})
\beq\lb{tauEq}
\tau:=\lim_{k\in\NN(L)} \frac{m(k)}{d_k}\in[0,1].
\eeq
Importantly, this idea allows us to realize
$\alpha_k$ as invariants associated to a {\it collapsing}
family of Okounkov bodies. Finally, it motivates 
the study of discrete Okounkov bodies, as~described~in~\S\ref{DiscreteOkBSubsec}.

The following generalizes Theorem \ref{MainDeltaThm}.

\bthm
\lb{DeltatauThm}
Let $X$ be a normal
projective variety
with klt singularities, and  $L$ a big line bundle on $X$.
For any sequence $\{m_k\}_{k\in\NN(L)}$,
with     $
         \lim_{k\to\infty}{m_k}/{d_k}=\tau\in[0,1],
     $
the limit
$$
    \bm{\delta}_\tau:=\lim_{k\to\infty}\delta_{k,m_k}
$$
exists and is independent of the choice of $\{m_k\}_{k\in\NN(L)}$. In particular, $\bm{\delta}_0=\alpha$, $\bm{\delta}_1=\delta$.

\noindent
Moreover, there exists $C>0$ independent of $k$ such that
$$
    \delta_{k,m_k}\geq\left(1-\frac{C}{k}\right)\min\left\{\frac{m_k}{\tau d_k},1\right\}\bm{\delta}_\tau.
$$
If $\tau>0$ and $\bm{\delta}_\tau$ is computed by a divisorial valuation (Definition \ref{computeddivisorialDef}), then
$$
    \delta_{k,m_k}\leq\left(1+\frac{C}{k}\right)\max\left\{\frac{m_k}{\tau d_k},1\right\}\bm{\delta}_\tau.
$$
\ethm

One idea in the proof of Theorem \ref{DeltatauThm}
is to construct a {\it rooftop Okounkov body} as well
as its discrete analogues above the ones
associated to $(X,L)$ and a flag associated to the divisorial
valuation that computes $\bm{\delta}_\tau$. This generalizes
a well-known construction of Donaldson \cite{Don} from the toric setting,
that was generalized by Witt Nystr\"om \cite{WN12}.
For toric $X$, we previously discretized Donaldson's construction
to obtain asymptotics of $\delta_k$ using Ehrhart theory \cite{JR2}. Here,
Ehrhart theory does not apply so instead we prove certain {\it uniform}
estimates on lattice point counts. Another idea is to use notions from probability theory
to control the convergence rates of the $\delta_{k,m_k}$ by interpreting
the associated invariants $S_{k,m_k}$ as barycenters of tail distributions
arising from the discrete Okounkov body $\Delta_k$ via discrete and continuous ccdfs
and volume
quantiles. This makes contact with the beautiful construction of Bouckson--Chen 
of 1-parameter filtered families of Okounokv bodies \cite{BC11} (in some sense, it is
its quantization). A third idea is to
introduce the notion of {\it idealized expected vanishing order} and {\it
idealized jumping function} to get useful estimates~on~the~$S_{k,m_k}$.

It is natural to ask
whether the
new invariants $\bm{\delta}_\tau$ are related to K-stability.
A model result, going back to Tian in 1987, and generalized over the years
by Odaka--Sano, Fujita, and Blum--Jonsson 
\cite[Theorem 2.1]{Tian87},\cite[Theorem 1.4]{OS},\cite[Proposition 2.1]{Fuj19},~\cite[Theorem A]{BJ20}
is the following.

\bthm
\lb{TianOdakaSanoThm} {\rm (Tian, Odaka--Sano)}
Let $X$ be a normal
projective variety
with klt singularities and $-K_X$ ample.
Then
$\alpha \stackrel{(\ge)}{>} \frac n{n+1}
$
implies K-(semi)stability.
\ethm

Despite multiple generalizations, it has been an open problem whether
Theorem \ref{TianOdakaSanoThm} extends to big classes since all proofs
use ampleness crucially.
In the spirit of \S\ref{DeltaSubSec}--\S\ref{AlphaSubSec} it is natural to ask:

\bprob
\lb{TianOdakaSanoProb}
Does Tian--Odaka--Sano's criterion generalize to the big cone?
\eprob

A major motivation for Problem \ref{TianOdakaSanoProb} is that 
 Theorem \ref{TianOdakaSanoThm}
is one of the main tools for constructing smooth and singular \KE metrics.
Thus, a positive
solution of Problem \ref{TianOdakaSanoProb} would yield 
many new
K-(semi)stable varieties with $-K_X$ big. 
Via the work of Darvas--Zhang \cite{DarvasZhang} this can
be used to construct many new singular twisted \KE metrics on big classes. 

In a sequel \cite{JRT2} we show how our work here, together with new estimates
on {\it sub-barycenters} in convex geometry together with {\it Nakayama's minimal
vanishing order invariant} can be used to 
simultaneously resolve Problem \ref{TianOdakaSanoProb}
and, in the spirit of \S\ref{TauSubSec}, show it extends to all volume
quantiles $\tau\in[0,1]$ and to all big $L$ (not just $L=-K_X$). 
We state the result next in order to showcase this application
of Theorem \ref{DeltatauThm} and the circle of ideas involved in its proof.
A key ingredient, in addition to Theorem \ref{DeltatauThm}, is the realization, inspired by convex geometry, that  
while $\bm{\delta}_\tau$ are defined on the big cone, they
must be normalized in a delicate manner outside the nef cone in order to encode
K-stability. 

Note that in the next definition $\bm{\tilde\delta}_1=\delta$ always, while on the nef cone, 
$\bm{\tilde\delta}_\tau=\bm{\delta}_\tau$ and $\tilde\alpha=\alpha$.

\bdefn
\lb{tildedeltatauDef}
Let  $\sigma:\ValXdiv\ra\RR_+$ be 
the minimal vanishing order (Definition \ref{S0Def}).~Set~(see~\S\ref{prelim})
$$\bm{\tilde\delta}_\tau:=
\begin{cases}
            \displaystyle\inf_{v\in \ValXdiv} \frac A{\cS_0+\sigma/n}=:\tilde\alpha,&\tau=0,\\
            \\
            \displaystyle\inf_{v\in\ValXdiv} \frac A{\cS_\tau+\Big[\frac1{\tau}\left(1-\left(1-\tau\right)^\frac{n+1}{n}\right)-1\Big]\sigma},&\tau\in\left(0,1\right].
\end{cases}
$$
\edefn

These new invariants yield a 
generalization~of~Theorem~\ref{TianOdakaSanoThm}~resolving~Problem~\ref{TianOdakaSanoProb}~affirmatively \cite{JRT2}.

\begin{theorem}\lb{GenTianThm}
Let $X$ be a normal
projective variety
with klt singularities, and  $L$ a big line bundle on $X$.
Then
    $$
        \bm{\tilde\delta}_\tau\leq\begin{cases}
            \frac{n}{n+1}\delta,&\tau=0;\\ 
            \tau\left(1-\left(1-\tau\right)^\frac{n+1}{n}\right)^{-1}\delta,&0<\tau\leq1.
        \end{cases}
    $$
    In particular, $\delta\stackrel{(\ge)}{>}1$  if
$\disp\bm{\tilde\delta}_\tau \stackrel{(\ge)}{>} \frac n{n+1-\root n \of \tau}$
for some $\tau\in[0,1].$
\end{theorem}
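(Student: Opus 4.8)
The plan is to deduce the stated inequality directly from the definition of $\bm{\tilde\delta}_\tau$ (Definition \ref{tildedeltatauDef}) together with the (purely algebraic) lower bound $\mathcal S_\tau \ge \mathcal S_0 + c(\tau)\,\sigma/(n+1)$ or, more precisely, an inequality that controls the $\tau$-quantile expected vanishing order $\mathcal S_\tau$ from below in terms of the ``global'' expected vanishing order $\mathcal S := \mathcal S_1$ for each fixed divisorial valuation $v\in\ValXdiv$. Concretely, for every fixed $v$ one has $A_X(v)/\mathcal S \ge \delta$ by the Fujita--Li valuative characterization of $\delta$, so it suffices to show, for each $v$, that the denominator appearing in $\bm{\tilde\delta}_\tau$ is at least $\tau\bigl(1-(1-\tau)^{\frac{n+1}{n}}\bigr)^{-1}\mathcal S$ (resp. $\tfrac{n+1}{n}\mathcal S$ when $\tau=0$) — up to the $\sigma$-correction which is exactly engineered to absorb the behavior outside the nef cone. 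Taking the infimum over $v$ then yields the claim. The ``in particular'' statement is then immediate by plugging in $\delta\stackrel{(\ge)}{>}1$ and rearranging.

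First I would recall from \S\ref{prelim} the precise normalization of $\mathcal S_\tau$ and $\mathcal S_0$ in terms of the volume quantile: $\mathcal S_\tau(v)$ is the barycenter of the top $\tau$-tail of the distribution of $v$-values on sections of $kL$ (in the $k\to\infty$ limit), so that $\mathcal S_1 = \mathcal S$ and $\mathcal S_0 = \mathcal S_0(v)$ is the (essential) maximum, i.e. Nakayama's invariant up to the $\sigma/n$ shift. The key convex-geometric input is the elementary inequality that, for a probability measure on $[0,\infty)$ whose quantile (inverse ccdf) function is convex — which holds here because the Okounkov body / Newton--Okounkov body of $(X,L)$ along the flag of $v$ is convex and the relevant slicing produces a concave volume profile — the $\tau$-tail barycenter $\mathcal S_\tau$ satisfies a sharp lower bound of the form $\mathcal S_\tau \ge \bigl(1-(1-\tau)^{1/n}\bigr)$-weighted combination reducing to $\mathcal S_1$; integrating the profile $t\mapsto (1-t)^{1/n}$ (the $n$-dimensional slicing weight) over $[1-\tau,1]$ produces exactly the factor $\frac{n}{n+1}\cdot\frac{1}{\tau}\bigl(1-(1-\tau)^{\frac{n+1}{n}}\bigr)$, and the bracketed coefficient $\frac1\tau(1-(1-\tau)^{\frac{n+1}{n}})-1$ multiplying $\sigma$ in Definition \ref{tildedeltatauDef} is precisely the residual needed to make this an equality on the nef cone and an inequality in general. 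I would carry this out by: (i) fixing $v$ and writing $\mathcal S_\tau(v)$, $\mathcal S_0(v)$, $\mathcal S(v)$ via the Okounkov-body integral formulas; (ii) applying the integral inequality above to bound the denominator in $\bm{\tilde\delta}_\tau$ below by the corresponding multiple of $\mathcal S(v)$; (iii) dividing by $A_X(v)$, using $A_X(v)/\mathcal S(v)\ge\delta$, and taking $\inf_v$.

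The main obstacle I anticipate is step (ii) — specifically, proving the sharp tail-barycenter inequality with the exact constant $\tau(1-(1-\tau)^{\frac{n+1}{n}})^{-1}$ and verifying that the $\sigma$-correction term is large enough to compensate the failure of convexity of the volume profile when $L$ is big but not nef. On the nef cone the Okounkov body is genuinely convex and the profile $t\mapsto\Vol(L)^{-1}\Vol(\text{restricted})$ is concave, so the quantile function is convex and the inequality is the clean statement that the tail barycenter of a distribution with convex quantile function dominates an explicit $\tau$-dependent fraction of the full barycenter; this is a one-variable calculus lemma once the weight $(1-t)^{1/n}$ is identified (it comes from Fujita-type restricted volume asymptotics / the concavity of $t\mapsto\Vol(L-tE)^{1/n}$). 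Off the nef cone, the correct replacement uses Nakayama's minimal vanishing order $\sigma$ to ``shift the origin'': one writes $\mathcal S_\tau(v)=\sigma(v)\cdot(\text{something}) + \mathcal S_\tau^{\mathrm{nef part}}(v)$ and the algebra of the bracket $\frac1\tau(1-(1-\tau)^{\frac{n+1}{n}})-1$ is set up so that the nef-cone inequality transfers verbatim. I would therefore isolate this as a standalone convex-geometry lemma (deferred in detail to the sequel \cite{JRT2} as the text indicates, invoking the ``sub-barycenter'' estimates mentioned there), and here only record the reduction and the $\tau=0$ and $\tau=1$ sanity checks: at $\tau=1$ the bracket vanishes, the bound reads $\bm{\tilde\delta}_1=\delta\le\delta$, and the stability consequence is the classical $\delta\stackrel{(\ge)}{>}1$; at $\tau=0$ one recovers $\tilde\alpha\le\frac{n}{n+1}\delta$, hence $\tilde\alpha\stackrel{(\ge)}{>}\frac{n}{n+1}$ implies $\delta\stackrel{(\ge)}{>}1$, generalizing Theorem \ref{TianOdakaSanoThm}.
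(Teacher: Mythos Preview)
The paper does not actually prove Theorem~\ref{GenTianThm}: it is stated ``to showcase this application'' and the proof is explicitly deferred to the sequel \cite{JRT2}, with only the ingredients named (``new estimates on \emph{sub-barycenters} in convex geometry together with \emph{Nakayama's minimal vanishing order invariant}''). So there is no in-paper argument to compare against line by line.

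That said, your outline matches those hints precisely. The reduction you describe is the right one: since $\delta=\inf_{v}A/\cS_1$ and $\bm{\tilde\delta}_\tau=\inf_v A/D_\tau(v)$ with $D_\tau(v):=\cS_\tau(v)+\big[\tfrac1\tau(1-(1-\tau)^{(n+1)/n})-1\big]\sigma(v)$, the inequality $\bm{\tilde\delta}_\tau\le C(\tau)\delta$ follows once one shows, for every $v\in\ValXdiv$, that $D_\tau(v)\ge C(\tau)^{-1}\cS_1(v)$, i.e.,
\[
\cS_\tau(v)-\sigma(v)\;\ge\;\frac{1-(1-\tau)^{(n+1)/n}}{\tau}\,\bigl(\cS_1(v)-\sigma(v)\bigr).
\]
Via Lemma~\ref{muvdivLem} this is exactly a \emph{sub-barycenter} (tail-barycenter) inequality for a probability measure on $[0,T]$ whose density is the $(n-1)$st power of a concave function---which is what the paper calls the ``new estimates on sub-barycenters'' and defers to \cite{JRT2}. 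Your identification of the $\sigma$-shift as the device that reduces the big case to the nef case (where $\sigma\equiv0$ by Lemma~\ref{ELMNPFujLem}) is also on target and is what the paper means by invoking Nakayama's invariant. So your proposal is not a proof, but it is the correct skeleton, and it coincides with the approach the paper advertises.

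Two small cautions. First, your remark that ``the quantile function is convex because the volume profile is concave'' is not quite the right mechanism: what one actually uses is that $d\mu_v=f^{n-1}ds$ with $f$ concave on $[\sigma,\cS_0]$ (Lemma~\ref{muvdivLem}), and the sharp constant comes from the extremal case of this family, not from convexity of $Q_v$ per se. Second, the ``in particular'' is not a bare rearrangement: from $\bm{\tilde\delta}_\tau\le \tau(1-(1-\tau)^{(n+1)/n})^{-1}\delta$ one gets $\delta>1$ provided $\bm{\tilde\delta}_\tau>\tau(1-(1-\tau)^{(n+1)/n})^{-1}$, and one still needs the elementary inequality
\[
\frac{n}{\,n+1-\tau^{1/n}\,}\;\ge\;\frac{\tau}{\,1-(1-\tau)^{(n+1)/n}\,},\qquad \tau\in[0,1],
\]
(with equality at $\tau=0,1$) to pass to the cleaner threshold stated in the theorem.
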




\subsection{Discrete Okounkov bodies and their gap asymptotics}
\lb{DiscreteOkBSubsec}

To prove our main results on asymptotics of thresholds (Theorems \ref{MainAlphaThm} and \ref{DeltatauThm})
requires in an essential way to introduce the concept of discrete Okounkov bodies
and study their asymptotics.
As we explain next, these provide the natural generalization of Ehrhart
theory and monomial sections from the toric setting. In \S\ref{WeierSubSec} we explain how
they generalize Weierstrass gap theory to any dimension.

Indeed, results in the toric case on the asymptotics of $\delta_k$ \cite{JR2} pointed
out a fascinating link
between Ehrhart theory of asymptotics of lattice points
in lattice polytopes and K-stability theory. 
Such asymptotics have been lacking for the vastly
more general Okounkov bodies.
This is one of the inspirations for this article, aiming 
to develop quantitative lattice point
estimates in Okounkov bodies. Aside from their crucial
application to asymptotics of thresholds, we believe some
of the techniques developed in this article will be 
important for further study of quantitative asymptotics in 
the theory of Okounkov bodies.

First, recall the definition of the Okounkov body \cite[\S1]{LM09},
\cite{KK}
associated
to a big line bundle $L\ra X$ over an $n$-dimensional irreducible variety and a choice of admissible flag
    $$
        Y_\bullet:\ X=Y_0\supset Y_1\supset\cdots\supset Y_n=\{p\},
    $$
    on $X$ (i.e.,
each $Y_i$ is an irreducible subvariety of codimension $i$ regular at $p$).
Choose a holomorphic coordinate chart $f:U\to V\subset\CC^n$ around $p$ such that
    $
        Y_i\cap U=f^{-1}\left(\left\{x\in V\,:\,x_j=0\text{ for }1\leq j\leq i\right\}\right).
    $
    By shrinking $U$ we may assume $L|_U$ is trivial. Fix a trivialization on $U$. Let $\nu$ denote the lexicographic valuation, i.e., for a non-zero power series $s=\sum_Ia_Ix^I$,
    $
        \nu\left(s\right)=\min\left\{I\,:\,a_I\neq0\right\},
    $
    where we take the lexicographic order on the space of multi-indices.
    Let $V_\bullet$ be a graded linear series belonging to $L$. The Okounkov body is the compact convex set
    $$
        \Delta:=\overline{\co\left(\bigcup_{k\in\NN}{\textstyle\frac1k}\,\nu\left(V_k\setminus\left\{0\right\}\right)\right)}\subset\RR^n,
    $$
    where $\co$ denotes the convex hull.

In this article we define and initiate the study of the discrete analogues of the Okounkov body:
\bdefn\lb{DiscreteOkBodyDef}
    For $k\in\NN$, the {\it $k$-th discrete Okounkov body}\ \ is
    \begin{equation}\label{DeltakEq}
        \Delta_k:={\textstyle\frac1k}\,\nu\left(V_k\setminus\left\{0\right\}\right)\subset\ZZ^n/k.
    \end{equation}
\edefn

A hallmark of Ehrhart theory of a convex lattice polytope $P$ is that 
the {\it Ehrhart function} $E(k):=\#(P\cap \ZZ^n/k)$ is a polynomial
of degree $n$, with its coefficients intensely studied (though still
quite mysterious) \cite{Gruber}. For a general convex body $K$ the Ehrhart function
is rather mysterious beyond the leading term $|K|k^n$. 
Rather recently it was determined for irrational planar
triangles \cite{CGLS}. An old, but quite different problem, known as the generalized
Gauss Circle Problem is to determine the sub-leading asymptotics for a {\it real}
dilation parameter $r>0$. Namely, to determine the asymptotics of $\big|\#(rK\cap \ZZ^n)-|K|r^n\big|$.
This is still wide open even for $K$ the unit disc in $\RR^2$ (see, e.g., \cite{Huxley}).

These classical problems are hallmarks of Discrete Geometry and Geometry of Numbers.
In the context of Okounkov bodies we pose a
somewhat different, but related, set of problems, inspired by Problems \ref{AsympProb} and \ref{AsympAlphaProb}. Namely, we are interested to quantify to what
extent do the $\Delta_k$ approximate $\Delta$. In this setting the asymptotics of
$\#(\Delta_k)=d_k$ are completely known (at least when $L$ is ample): this is the Hilbert polynomial of $L$
(recall Definition \ref{dkNLdeltakmDef}). Thus, instead
of comparing $\Delta$ to the $\Delta_k$ the natural problem is to consider
{\it gaps} in $\Delta\cap \ZZ^n/k$. To make this precise, recall the 
notion of a valuative point of $\Delta\cap\QQ^n$, i.e.,
a point in the image of $\nu_k$ for {\it some} $k\in \NN(L)$
\cite[Definition 1.2.44]{KL}. We extend this to discrete~Okounkov~bodies:

\bdefn
\lb{kgapsDefn}
A point in $\Delta\cap \ZZ^n/k$ is {\it $k$-valuative} if it is in $\Delta_k$,
and is a {\it $k$-gap point} otherwise. The {\it $k$-gap set of $\Delta$} is
$(\Delta\cap \ZZ^n/k)\setminus\Delta_k$.
\edefn

\bprob
\lb{DkdkProb} {\rm (Generalized Weierstrass Problem)}
Determine the asymptotics~of~$\#(\Delta\cap \ZZ^n/k)\!-\!\#(\Delta_k)$.
\eprob

In the smooth toric setting the Ehrhart polynomial coincides with the Hilbert polynomial. 
Equivalently, every $k$-lattice point corresponds to a monomial section and there are no gaps. 
Thus, Problem \ref{DkdkProb} concerns how 
far can an Okounkov body be from the ``completely integrable" toric case,
in an asymptotic sense. 
Equivalently, how far is the discrete Okounkov body $\Delta_k$
from the {\it idealized discrete Okounkov body} $\Delta\cap \ZZ^n/k$.

In fact, we are particularly interested in two refinements of Problem \ref{DkdkProb}.
The first is a version for ``quantiles" and the second is a microlocal version, both 
involving a second real parameter $t$.

To introduce these recall the construction of a 1-parameter family of Okounkov bodies $\Delta_v^t$
(Definition \ref{G def})
using graded linear subseries of $L$, $V_{v,\bullet}^t$, associated
to a valuation on $X$ (Definition \ref{GradedLinearSerierDef}).
The guiding example to keep in mind is the case $v$ is the order of vanishing
along $Y_1$, the divisor in the flag, but everything makes sense in general.
In this case, $\Delta_v^t$ is simply the super-level set of $p_1:\RR^n\ra\RR$,
the projection to the first coordinate (Lemma \ref{sliceDivOkBodyLem}), see
Figure \ref{FigDkdk}.

\begin{figure}[ht]
    \centering
    \begin{tikzpicture}
        \draw[->](-.5,0)--(5,0);
        \draw[->](0,-.5)--(0,3);
        \draw(0,0)node[below left]{$0$};
        \draw(0,2)--(1.5,3)node[above left]{$\Delta^0\!=\!\Delta$}--(3,2)--(4.5,1)node[midway, above right]{$\Delta^t$}--(4,.5)--(1,0.5)--(0,1)--cycle;
        \filldraw(0,1)circle(2pt);
        \filldraw[fill=white](0,2)circle(2pt);
        \filldraw(1.5,1)circle(2pt);
        \filldraw[fill=white](1.5,2)circle(2pt);
        \filldraw[fill=white](1.5,3)circle(2pt);
        \filldraw(3,1)circle(2pt);
        \filldraw(3,2)circle(2pt);
        \filldraw[fill=white](4.5,1)circle(2pt);
        \draw[dashed](2.2,0)node[below]{\scriptsize$t$}--(2.2,2.55);
        \draw[dashed](3,0)node[below]{\scriptsize$\max_{\Delta_k}\! p_1$}--(3,2.1);
        \draw[dashed](4.5,0)node[below]{\scriptsize$\max_\Delta\! p_1$}--(4.5,1);
    \end{tikzpicture}
    \caption{A 1-parameter family of Okounkov bodies and a discrete Okounkov body associated to a
    divisorial valuation with $\#(\Delta_k)=d_k=4$ and $\#(\Delta\cap \ZZ^n/k)=D_k=8$.
    }
\lb{FigDkdk}
\end{figure}
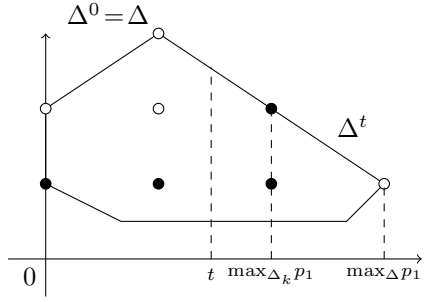

One can pose Problem \ref{DkdkProb} as a function of $t$. However,
it turns out to be preferable to pose the problem in terms of the {\it volume quantile},
a generalized inverse to the function $t\mapsto |\Delta_v^t|/|\Delta|$,
denoted $Q_v$, defined on $\tau\in[0,1]$, introduced in Definition \ref{ccdfDef}.

\bprob
\lb{DkdkQuantileProb}
Determine the asymptotics of $\#\big(\Delta
^{Q_v(\tau)}\cap \ZZ^n/k\big)-\#\big((\Delta^{Q_v(\tau)})_k\big)$, for
$\tau\in(0,1]$
\eprob

In fact, in \S\ref{lb section}--\S\ref{ub section} we solve a more difficult problem by determining the asymptotics
not just of the lattice point difference for each $\tau$ but also that of the first
moments of the corresponding empirical measures. The associated barycenters
are then the invariants $\cS_\tau$ (Proposition \ref{S_tau def})
 who are used to show the well-definedness
of the invariants $\bm{\delta}_\tau$ (Corollary \ref{deltadoubleasymCor}).
The barycenters of these ``partial Okounkov bodies" $\Delta
^{Q_v(\tau)}$ can be thought of as
associated to tail distributions (Definition \ref{ccdfDef}) of the Lebesgue measure with respect to the
volume quantile. On the other hand, the barycenters of the empirical measures
associated to $(\Delta^{Q_v(\tau)})_k$ can be understood simultaneously 
via quantum tail distribution (Definition \ref{iota defn})
and geometrically via 
the counting function on an associated {\it rooftop Okounkov body} (proof 
of Theorem \ref{SdoubleasymCor}).
This idea is inspired by a similar construction we employed in the toric
case \cite{JR2}.

Finally, we pose a microlocal version of Problem \ref{DkdkQuantileProb}, related to the 
collapsing limit $\tau\ra 0$. 
Set
\beq\lb{p1plusbodyEq}
\#\big(\Delta^{\max_{\Delta_k} p_1\pm}\cap \ZZ^n/k\big):=
\#\big(\Delta^{\max_{\Delta_k} p_1\pm\eps}\cap \ZZ^n/k\big),
\q \hbox{\rm  for any $\eps\in(0,1/k)$}.
\eeq

\bprob
\lb{DkdkMicrolocalProb}
Determine the asymptotics of 
$\#\big(\Delta^{\max_{\Delta_k} p_1\pm}\cap \ZZ^n/k\big)$.
\eprob

Note that as $k\ra\infty$, the bodies $\Delta^{\max_{\Delta_k}p_1-}$ collapse
to a lower-dimensional convex set. This corresponds to a collapse of the 
model given by the corresponding Kodaira maps.

The following key estimate, shows that 
Problem \ref{DkdkMicrolocalProb} is closely related to determining the asymptotics of 
$\max_{\Delta} p_1 - \max_{\Delta_k} p_1$, which is itself equivalent to 
Problem \ref{AsympAlphaProb}. 

\bthm
\lb{maxp1Thm}
For $v=\ord_{Y_1}$,
$$
\disp
0
\le
\max_{\Delta} p_1 - \max_{\Delta_k} p_1
\le 
C\Big(\#\big(\Delta^{\max_{\Delta_k} p_1+}\cap \ZZ^n/k\big)\Big)^{1/n}/k.
$$
\ethm

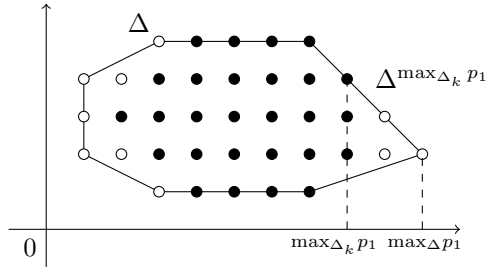
\begin{figure}[ht]
    \centering
    \begin{tikzpicture}
        \draw[->](-1,0)--(5,0);
        \draw[->](-.5,-.5)--(-.5,3);
        \draw(-.5,0)node[below left]{$0$};
        \draw(0,2)--(1,2.5)node[above left]{$\Delta$}--(3,2.5)--(4.5,1)node[midway,above right]{$\Delta^{\max_{\Delta_k} p_1}$}--(3,.5)--(1,.5)--(0,1)--cycle;
        \filldraw[fill=white](0,1)circle(2pt);
        \filldraw[fill=white](0,1.5)circle(2pt);
        \filldraw[fill=white](0,2)circle(2pt);
        \filldraw[fill=white](.5,1)circle(2pt);
        \filldraw(.5,1.5)circle(2pt);
        \filldraw[fill=white](.5,2)circle(2pt);
        \filldraw[fill=white](1,.5)circle(2pt);
        \filldraw(1,1)circle(2pt);
        \filldraw(1,1.5)circle(2pt);
        \filldraw(1,2)circle(2pt);
        \filldraw[fill=white](1,2.5)circle(2pt);
        \filldraw(1.5,.5)circle(2pt);
        \filldraw(1.5,1)circle(2pt);
        \filldraw(1.5,1.5)circle(2pt);
        \filldraw(1.5,2)circle(2pt);
        \filldraw(1.5,2.5)circle(2pt);
        \filldraw(2,.5)circle(2pt);
        \filldraw(2,1)circle(2pt);
        \filldraw(2,1.5)circle(2pt);
        \filldraw(2,2)circle(2pt);
        \filldraw(2,2.5)circle(2pt);
        \filldraw(2.5,.5)circle(2pt);
        \filldraw(2.5,1)circle(2pt);
        \filldraw(2.5,1.5)circle(2pt);
        \filldraw(2.5,2)circle(2pt);
        \filldraw(2.5,2.5)circle(2pt);
        \filldraw(3,.5)circle(2pt);
        \filldraw(3,1)circle(2pt);
        \filldraw(3,1.5)circle(2pt);
        \filldraw(3,2)circle(2pt);
        \filldraw(3,2.5)circle(2pt);
        \filldraw(3.5,1)circle(2pt);
        \filldraw(3.5,1.5)circle(2pt);
        \filldraw(3.5,2)circle(2pt);
        \filldraw[fill=white](4,1)circle(2pt);
        \filldraw[fill=white](4,1.5)circle(2pt);
        \filldraw[fill=white](4.5,1)circle(2pt);
        \draw[dashed](3.5,0)node[below]{\scriptsize$\!\!\!\!\!\!\!\max_{\Delta_k}\!p_1$}--(3.5,2);
        \draw[dashed](4.5,0)node[below]{\scriptsize$\;\max_{\Delta}\!p_1$}--(4.5,1);
    \end{tikzpicture}
    \caption{A possible Okounkov body $\Delta$ of some big line bundle over a surface with respect to some flag $Y_\bullet:Y_0\supset Y_1\supset Y_2$. The dots are the $k$-th lattice points $\Delta\cap\ZZ^2/k$, of which the solid dots are the ones corresponding to sections in $R_k$. Let $v=\ord_{Y_1}$. Then there are no solid dots whose first coordinate exceeds $\max_{\Delta_k}\!p_1$ (or $S_{k,1}(v)$
    in the notation of Definition \ref{SkmDef}; $\max_{\Delta_k}\!p_1=\cS_0(v)$ in the
    notation of Definition \ref{S0Def}).}
\end{figure}

Theorem \ref{maxp1Thm} is a crucial ingredient in the proof of Theorem \ref{MainAlphaThm}.
To see why,
note that by the proof of Theorem \ref{delta main} (in the case $m_k=1$,
see \S\ref{ProofofTwoThmSubSec})
there exists a constant $C>0$ independent of $k$
such that 
$$
\alpha=\frac C{\max_{\Delta} p_1}\le\alpha_k\le \frac C{\max_{\Delta_k} p_1}.
$$
Thus, Theorem \ref{maxp1Thm} and Lemma \ref{DkdkLem} imply  
$
\alpha_k-\alpha=O(k^{-\frac1{n}}),
$
which is the conclusion of Theorem \ref{MainAlphaThm}.

Theorem \ref{maxp1Thm} is also related to Problem \ref{DkdkProb} since
\beq\lb{TrivialLatticeCountEq}
\#\big(\Delta^{\max_{\Delta_k} p_1+}\cap \ZZ^n/k\big)\le 
\#(\Delta\cap \ZZ^n/k)-\#(\Delta_k).
\eeq
E.g., in the toric case the right hand side is zero, 
since $\alpha=\alpha_k$ for all $k\in\NN$ \cite{JR1}.
In general, the right hand side is $O(k^{n-1})$. 
A interesting problem is to 
obtain better estimates that would take into account the geometry
of $(X,L)$ and the flag, and we come back to that in \S\ref{WeierSubSec}.

One such improvement is Proposition \ref{secondconeprop} that implies the estimate
\beq\lb{DeltaimprovedasympEq}
\#\big(\Delta^{\max_{\Delta_k} p_1+}\cap \ZZ^n/k\big)
\ge C (\max_{\Delta} p_1 - \max_{\Delta_k} p_1)^{n-\iota}k^n,
\eeq
where $\iota:=\dim \Delta\cap p_1^{-1}(\max_\Delta p_1)$ is the dimension
of the ``collapsed Okounkov body." This implies
the improved blow-up estimate
$$
0
\le \max_{\Delta} p_1 - \max_{\Delta_k} p_1
\le  C/{k^{\frac1{n-\iota}}},
$$
and consequently  
\beq\lb{alphaimprovedasympEq}
\alpha_k-\alpha=O(k^{-\frac1{n-\iota}}).
\eeq

\subsection{Extending Weierstrass gaps to higher dimensions}
\lb{WeierSubSec}

As just explained in \S\ref{DiscreteOkBSubsec},
Theorem \ref{maxp1Thm} implies that gaps of discrete Okounkov bodies
control asymptotics of thresholds. 
This leads us, surprisingly, to offer a higher-dimensional
generalization of Weierstrass gap theory for curves \cite{delCentina,BN74,Hau96,Ols72,Nee84,Rau55,EH,HM}. To the best of 
our knowledge such a generalization is new.

To see this, let us first phrase the precise manner in which a Weierstrass gap sequence can
be understood in terms of a particular kind of discrete Okounkov bodies and
their gaps. In fact, this simultaneously recovers all {\it higher Weierstrass gap
sequences} in the sense of H\"urwitz \cite[\S3]{Hurwitz}:

\bprop
\lb{Weierstrass2IntroProp}
Let $C$ be a smooth curve of genus $g\geq2$ and $p$ 
a point in $C$. Let $d_1=g$ and $d_k=k(2g-2)+1-g$ for $k\ge2$.
 The $k$-Weierstrass gap sequence of $p$ is $1=N^{(k)}_1<\cdots<N^{(k)}_{d_k}\le k(2g-2)+1$,~with
$$
\{N^{(k)}_1,\ldots,
N^{(k)}_{d_k}\}=k\Delta_k+1,
$$
where $\Delta_k$ is $k$-th discrete Okounkov body of 
$K_C$ associated 
to the flag $C\supset\{p\}$.
For $k\ge2$, 
$$\max\Delta-\max\Delta_k
\le \frac{\#(\Delta\cap \ZZ^n/k)-\#(\Delta_k)}k=\frac{g}k,
$$
with equality if and only if $p$ is not a $k$-Weierstrass point.
\eprop

Proposition \ref{Weierstrass2IntroProp} is proved in \S\ref{classicalWeierSubSec}.
Figure \ref{2WeierstrassFig} depicts
the case $g=3$ and $k\in\{1,2\}$.

\begin{figure}[ht]
    \centering
    \begin{subfigure}{.3\textwidth}
        \centering
        \begin{tikzpicture}
            \draw[->](-.5,0)--(3,0);
            \filldraw(0,0)node[below]{$0$}circle(2pt);
            \filldraw(.625,0)circle(2pt);
            \filldraw(1.25,0)circle(2pt);
            \filldraw[fill=white](1.875,0)circle(2pt);
            \filldraw[fill=white](2.5,0)node[below]{$4$}circle(2pt);
        \end{tikzpicture}
    \end{subfigure}
    \begin{subfigure}{.3\textwidth}
        \centering
        \begin{tikzpicture}
            \draw[->](-.5,0)--(3,0);
            \filldraw(0,0)node[below]{$0$}circle(2pt);
            \filldraw(.625,0)circle(2pt);
            \filldraw[fill=white](1.25,0)circle(2pt);
            \filldraw(1.875,0)circle(2pt);
            \filldraw[fill=white](2.5,0)node[below]{$4$}circle(2pt);
        \end{tikzpicture}
    \end{subfigure}
    \begin{subfigure}{.3\textwidth}
        \centering
        \begin{tikzpicture}
            \draw[->](-.5,0)--(3,0);
            \filldraw(0,0)node[below]{$0$}circle(2pt);
            \filldraw(.625,0)circle(2pt);
            \filldraw[fill=white](1.25,0)circle(2pt);
            \filldraw[fill=white](1.875,0)circle(2pt);
            \filldraw(2.5,0)node[below]{$4$}circle(2pt);
        \end{tikzpicture}
    \end{subfigure}
    \begin{subfigure}{.3\textwidth}
        \centering
        \begin{tikzpicture}
            \draw[->](-.5,0)--(3,0);
            \filldraw(0,0)node[below]{$0$}circle(2pt);
            \filldraw(.3125,0)circle(2pt);
            \filldraw(.625,0)circle(2pt);
            \filldraw(.9375,0)circle(2pt);
            \filldraw(1.25,0)circle(2pt);
            \filldraw(1.5625,0)circle(2pt);
            \filldraw[fill=white](1.875,0)circle(2pt);
            \filldraw[fill=white](2.1875,0)circle(2pt);
            \filldraw[fill=white](2.5,0)node[below]{$4$}circle(2pt);
        \end{tikzpicture}
    \end{subfigure}
    \begin{subfigure}{.3\textwidth}
        \centering
        \begin{tikzpicture}
            \draw[->](-.5,0)--(3,0);
            \filldraw(0,0)node[below]{$0$}circle(2pt);
            \filldraw(.3125,0)circle(2pt);
            \filldraw(.625,0)circle(2pt);
            \filldraw(.9375,0)circle(2pt);
            \filldraw(1.25,0)circle(2pt);
            \filldraw[fill=white](1.5625,0)circle(2pt);
            \filldraw(1.875,0)circle(2pt);
            \filldraw[fill=white](2.1875,0)circle(2pt);
            \filldraw[fill=white](2.5,0)node[below]{$4$}circle(2pt);
        \end{tikzpicture}
    \end{subfigure}
    \begin{subfigure}{.3\textwidth}
        \centering
        \begin{tikzpicture}
            \draw[->](-.5,0)--(3,0);
            \filldraw(0,0)node[below]{$0$}circle(2pt);
            \filldraw(.3125,0)circle(2pt);
            \filldraw(.625,0)circle(2pt);
            \filldraw[fill=white](.9375,0)circle(2pt);
            \filldraw(1.25,0)circle(2pt);
            \filldraw(1.5625,0)circle(2pt);
            \filldraw[fill=white](1.875,0)circle(2pt);
            \filldraw[fill=white](2.1875,0)circle(2pt);
            \filldraw(2.5,0)node[below]{$4$}circle(2pt);
        \end{tikzpicture}
    \end{subfigure}\vspace{1em}
    \begin{subfigure}{.3\textwidth}
        \centering
        \begin{tikzpicture}
            \draw[->](-.5,0)--(3,0);
            \filldraw(0,0)node[below]{$0$}circle(2pt);
            \filldraw(.625,0)circle(2pt);
            \filldraw(1.25,0)circle(2pt);
            \filldraw[fill=white](1.875,0)circle(2pt);
            \filldraw[fill=white](2.5,0)node[below]{$4$}circle(2pt);
        \end{tikzpicture}
    \end{subfigure}
    \begin{subfigure}{.3\textwidth}
        \centering
        \begin{tikzpicture}
            \draw[->](-.5,0)--(3,0);
            \filldraw(0,0)node[below]{$0$}circle(2pt);
            \filldraw(.625,0)circle(2pt);
            \filldraw(1.25,0)circle(2pt);
            \filldraw[fill=white](1.875,0)circle(2pt);
            \filldraw[fill=white](2.5,0)node[below]{$4$}circle(2pt);
        \end{tikzpicture}
    \end{subfigure}
    \begin{subfigure}{.3\textwidth}
        \centering
        \begin{tikzpicture}
            \draw[->](-.5,0)--(3,0);
            \filldraw(0,0)node[below]{$0$}circle(2pt);
            \filldraw(.625,0)circle(2pt);
            \filldraw(1.25,0)circle(2pt);
            \filldraw[fill=white](1.875,0)circle(2pt);
            \filldraw[fill=white](2.5,0)node[below]{$4$}circle(2pt);
        \end{tikzpicture}
    \end{subfigure}
    \begin{subfigure}{.3\textwidth}
        \centering
        \begin{tikzpicture}
            \draw[->](-.5,0)--(3,0);
            \filldraw(0,0)node[below]{$0$}circle(2pt);
            \filldraw(.3125,0)circle(2pt);
            \filldraw(.625,0)circle(2pt);
            \filldraw(.9375,0)circle(2pt);
            \filldraw(1.25,0)circle(2pt);
            \filldraw[fill=white](1.5625,0)circle(2pt);
            \filldraw(1.875,0)circle(2pt);
            \filldraw[fill=white](2.1875,0)circle(2pt);
            \filldraw[fill=white](2.5,0)node[below]{$4$}circle(2pt);
        \end{tikzpicture}
    \end{subfigure}
    \begin{subfigure}{.3\textwidth}
        \centering
        \begin{tikzpicture}
            \draw[->](-.5,0)--(3,0);
            \filldraw(0,0)node[below]{$0$}circle(2pt);
            \filldraw(.3125,0)circle(2pt);
            \filldraw(.625,0)circle(2pt);
            \filldraw(.9375,0)circle(2pt);
            \filldraw(1.25,0)circle(2pt);
            \filldraw[fill=white](1.5625,0)circle(2pt);
            \filldraw[fill=white](1.875,0)circle(2pt);
            \filldraw(2.1875,0)circle(2pt);
            \filldraw[fill=white](2.5,0)node[below]{$4$}circle(2pt);
        \end{tikzpicture}
    \end{subfigure}
    \begin{subfigure}{.3\textwidth}
        \centering
        \begin{tikzpicture}
            \draw[->](-.5,0)--(3,0);
            \filldraw(0,0)node[below]{$0$}circle(2pt);
            \filldraw(.3125,0)circle(2pt);
            \filldraw(.625,0)circle(2pt);
            \filldraw(.9375,0)circle(2pt);
            \filldraw(1.25,0)circle(2pt);
            \filldraw[fill=white](1.5625,0)circle(2pt);
            \filldraw[fill=white](1.875,0)circle(2pt);
            \filldraw[fill=white](2.1875,0)circle(2pt);
            \filldraw(2.5,0)node[below]{$4$}circle(2pt);
        \end{tikzpicture}
    \end{subfigure}
    \caption{The discrete Okounkov bodies $\Delta_k\subset\Delta\cap\ZZ/k\subset\Delta=[0,4]$ of $\cO_C(K_C)$ over a smooth quartic plane curve $C$, with the flag chosen to be $C\supset\{p\}$, for $k\in\{1,2\}$. There are six cases: $p$ is not a 2-Weierstrass point, $p$ a flex point, $p$ a hyperflex point, and $p$ a $s$-sextactic point for $s\in\{1,2,3\}$, respectively \cite[p. 10]{Ver}, \cite[Lemma 1]{AS},\cite{Cayley}.}\lb{2WeierstrassFig}
\end{figure}
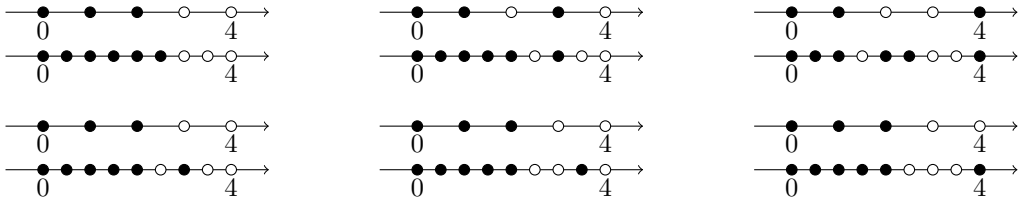

In view of Proposition \ref{Weierstrass2IntroProp}, Problems \ref{DkdkProb}--\ref{DkdkMicrolocalProb}
are precisely higher-dimensional generalizations of Weierstrass gap theory.
In our generalization, points are replaced by {\it flags}, gap sequences
by the $k$-gap {\it sets} $(\Delta\cap \ZZ^n/k)\setminus\Delta_k$ (Definition \ref{kgapsDefn}),
and the genus $g$ by the gap set asymptotics.

Naturally, there is also a dual description of Weierstrass gaps in terms
of a different kind of discrete Okounkov bodies:

\bprop
\lb{WeierstrassIntroProp}
Let $C$ be a smooth curve of genus $g$ and $p$ a 
point in $C$. The Weierstrass gap sequence of $p$ is $1=N_1<\cdots<N_g\le 2g-1$, with
$$
N_i=\min\{k\in\NN\,:\,\#(\Delta\cap \ZZ/k)-\#(\Delta_k)=i\},
$$
where $\Delta_k$ is $k$-th discrete Okounkov body of 
$\cO_C(p)$ associated 
to the flag $C\supset\{p\}$.

\smallskip\noindent
For any $\tau\in(0,1]$ and $k$ sufficiently large,
$\#\big(\Delta^{Q_v(\tau)}\cap \ZZ^n/k\big)-\#\big((\Delta^{Q_v(\tau)})_k\big)=g$. 
\eprop

Proposition \ref{WeierstrassIntroProp} is 
proved in \S\ref{classicalWeierSubSec}. 
The case $g=3$ and $k\le5$ 
is depicted in Figure \ref{VerFig}.

\bigskip
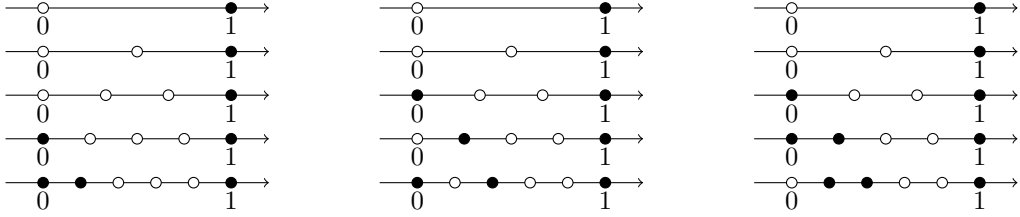
\begin{figure}[ht]
    \centering
        \begin{subfigure}{.3\textwidth}
        \centering
        \begin{tikzpicture}
            \draw[->](-.5,0)--(3,0);
            \filldraw[fill=white](0,0)node[below]{$0$}circle(2pt);
            \filldraw(2.5,0)node[below]{$1$}circle(2pt);
        \end{tikzpicture}
    \end{subfigure}
        \begin{subfigure}{.3\textwidth}
        \centering
        \begin{tikzpicture}
            \draw[->](-.5,0)--(3,0);
            \filldraw[fill=white](0,0)node[below]{$0$}circle(2pt);
            \filldraw(2.5,0)node[below]{$1$}circle(2pt);
        \end{tikzpicture}
    \end{subfigure}
        \begin{subfigure}{.3\textwidth}
        \centering
        \begin{tikzpicture}
            \draw[->](-.5,0)--(3,0);
            \filldraw[fill=white](0,0)node[below]{$0$}circle(2pt);
            \filldraw(2.5,0)node[below]{$1$}circle(2pt);
        \end{tikzpicture}
    \end{subfigure}
        \begin{subfigure}{.3\textwidth}
        \centering
        \begin{tikzpicture}
            \draw[->](-.5,0)--(3,0);
            \filldraw[fill=white](0,0)node[below]{$0$}circle(2pt);
            \filldraw[fill=white](1.25,0)circle(2pt);
            \filldraw(2.5,0)node[below]{$1$}circle(2pt);
        \end{tikzpicture}
    \end{subfigure}
        \begin{subfigure}{.3\textwidth}
        \centering
        \begin{tikzpicture}
            \draw[->](-.5,0)--(3,0);
            \filldraw[fill=white](0,0)node[below]{$0$}circle(2pt);
            \filldraw[fill=white](1.25,0)circle(2pt);
            \filldraw(2.5,0)node[below]{$1$}circle(2pt);
        \end{tikzpicture}
    \end{subfigure}
        \begin{subfigure}{.3\textwidth}
        \centering
        \begin{tikzpicture}
            \draw[->](-.5,0)--(3,0);
            \filldraw[fill=white](0,0)node[below]{$0$}circle(2pt);
            \filldraw[fill=white](1.25,0)circle(2pt);
            \filldraw(2.5,0)node[below]{$1$}circle(2pt);
        \end{tikzpicture}
    \end{subfigure}
    \begin{subfigure}{.3\textwidth}
        \centering
        \begin{tikzpicture}
            \draw[->](-.5,0)--(3,0);
            \filldraw[fill=white](0,0)node[below]{$0$}circle(2pt);
            \filldraw[fill=white](.833,0)circle(2pt);
            \filldraw[fill=white](1.666,0)circle(2pt);
            \filldraw(2.5,0)node[below]{$1$}circle(2pt);
        \end{tikzpicture}
    \end{subfigure}
    \begin{subfigure}{.3\textwidth}
        \centering
        \begin{tikzpicture}
            \draw[->](-.5,0)--(3,0);
            \filldraw(0,0)node[below]{$0$}circle(2pt);
            \filldraw[fill=white](.833,0)circle(2pt);
            \filldraw[fill=white](1.666,0)circle(2pt);
            \filldraw(2.5,0)node[below]{$1$}circle(2pt);
        \end{tikzpicture}
    \end{subfigure}
    \begin{subfigure}{.3\textwidth}
        \centering
        \begin{tikzpicture}
            \draw[->](-.5,0)--(3,0);
            \filldraw(0,0)node[below]{$0$}circle(2pt);
            \filldraw[fill=white](.833,0)circle(2pt);
            \filldraw[fill=white](1.666,0)circle(2pt);
            \filldraw(2.5,0)node[below]{$1$}circle(2pt);
        \end{tikzpicture}
    \end{subfigure}
    \begin{subfigure}{.3\textwidth}
        \centering
        \begin{tikzpicture}
            \draw[->](-.5,0)--(3,0);
            \filldraw(0,0)node[below]{$0$}circle(2pt);
            \filldraw[fill=white](.625,0)circle(2pt);
            \filldraw[fill=white](1.25,0)circle(2pt);
            \filldraw[fill=white](1.875,0)circle(2pt);
            \filldraw(2.5,0)node[below]{$1$}circle(2pt);
        \end{tikzpicture}
    \end{subfigure}
    \begin{subfigure}{.3\textwidth}
        \centering
        \begin{tikzpicture}
            \draw[->](-.5,0)--(3,0);
            \filldraw[fill=white](0,0)node[below]{$0$}circle(2pt);
            \filldraw(.625,0)circle(2pt);
            \filldraw[fill=white](1.25,0)circle(2pt);
            \filldraw[fill=white](1.875,0)circle(2pt);
            \filldraw(2.5,0)node[below]{$1$}circle(2pt);
        \end{tikzpicture}
    \end{subfigure}
    \begin{subfigure}{.3\textwidth}
        \centering
        \begin{tikzpicture}
            \draw[->](-.5,0)--(3,0);
            \filldraw(0,0)node[below]{$0$}circle(2pt);
            \filldraw(.625,0)circle(2pt);
            \filldraw[fill=white](1.25,0)circle(2pt);
            \filldraw[fill=white](1.875,0)circle(2pt);
            \filldraw(2.5,0)node[below]{$1$}circle(2pt);
        \end{tikzpicture}
    \end{subfigure}
    \begin{subfigure}{.3\textwidth}
        \centering
        \begin{tikzpicture}
            \draw[->](-.5,0)--(3,0);
            \filldraw(0,0)node[below]{$0$}circle(2pt);
            \filldraw(.5,0)circle(2pt);
            \filldraw[fill=white](1,0)circle(2pt);
            \filldraw[fill=white](1.5,0)circle(2pt);
            \filldraw[fill=white](2,0)circle(2pt);
            \filldraw(2.5,0)node[below]{$1$}circle(2pt);
        \end{tikzpicture}
    \end{subfigure}
    \begin{subfigure}{.3\textwidth}
        \centering
        \begin{tikzpicture}
            \draw[->](-.5,0)--(3,0);
            \filldraw(0,0)node[below]{$0$}circle(2pt);
            \filldraw[fill=white](.5,0)circle(2pt);
            \filldraw(1,0)circle(2pt);
            \filldraw[fill=white](1.5,0)circle(2pt);
            \filldraw[fill=white](2,0)circle(2pt);
            \filldraw(2.5,0)node[below]{$1$}circle(2pt);
        \end{tikzpicture}
    \end{subfigure}
    \begin{subfigure}{.3\textwidth}
        \centering
        \begin{tikzpicture}
            \draw[->](-.5,0)--(3,0);
            \filldraw[fill=white](0,0)node[below]{$0$}circle(2pt);
            \filldraw(.5,0)circle(2pt);
            \filldraw(1,0)circle(2pt);
            \filldraw[fill=white](1.5,0)circle(2pt);
            \filldraw[fill=white](2,0)circle(2pt);
            \filldraw(2.5,0)node[below]{$1$}circle(2pt);
        \end{tikzpicture}
    \end{subfigure}\caption{The discrete Okounkov bodies $\Delta_k\subset\Delta\cap\ZZ/k\subset\Delta=[0,1]$ of $\cO_C(p)$ over a smooth quartic plane curve $C$, with the flag chosen to be $C\supset\{p\}$, for $k\in\{1,\ldots,5\}$. There are three cases: 
    $p$ is a non-Weierstrass point, i.e., its gap sequence is $1,2,3$; $p$ is a flex point, for which the gap sequence is $1,2,4$; $p$ is a hyperflex point, for which the gap sequence is $1,2,5$ \cite[p. 10]{Ver}.}
    \lb{VerFig}
\end{figure}

\noindent

In \S\ref{DiscreteOkBSubsec} the distribution of gaps near the $\tau=0$ boundary
is emphasized (e.g., Theorem \ref{maxp1Thm}). Propositions \ref{Weierstrass2IntroProp}--\ref{WeierstrassIntroProp}
serve to emphasize that the same phenomenon occurs in classical Weierstrass theory.

With Propositions \ref{Weierstrass2IntroProp}--\ref{WeierstrassIntroProp}
in mind,
it is natural to consider the higher-dimensional
analogue:

\bprob
\lb{WeierstrassProb}
Understand Weierstrass gap patterns of discrete Okounkov bodies.
\eprob

See Figure \ref{HighDimWeierFig} for an example in $n=2$. 

\begin{figure}[ht]
    \centering
    \begin{subfigure}{.3\textwidth}
        \centering
        \begin{tikzpicture}
            \draw[->](-.25,0)--(.75,0);
            \draw(.5,0)node[below]{$\frac{1}{2}$};
            \draw[->](0,-.25)--(0,3.25);
            \draw(0,0)node[below left]{$0$};
            \draw(.5,0)--(.5,1)--(0,3);
            \filldraw(0,0)circle(2pt);
            \filldraw(0,1)circle(2pt)node[left]{$1$};
            \filldraw(0,2)circle(2pt)node[left]{$2$};
            \filldraw(0,3)circle(2pt)node[left]{$3$};
        \end{tikzpicture}
    \end{subfigure}
    \begin{subfigure}{.3\textwidth}
        \centering
        \begin{tikzpicture}
            \draw[->](-.25,0)--(.75,0);
            \draw[->](0,-.25)--(0,3.25);
            \draw(0,0)node[below left]{$0$};
            \draw(.5,0)--(.5,1)--(0,3);
            \filldraw(0,0)circle(2pt);
            \filldraw(0,.5)circle(2pt)node[left]{$\frac{1}{2}$};
            \filldraw(0,1)circle(2pt)node[left]{$1$};
            \filldraw(0,1.5)circle(2pt)node[left]{$\frac{3}{2}$};
            \filldraw(0,2)circle(2pt)node[left]{$2$};
            \filldraw(0,2.5)circle(2pt)node[left]{$\frac{5}{2}$};
            \filldraw(0,3)circle(2pt)node[left]{$3$};
            \filldraw(.5,0)circle(2pt)node[below]{$\frac{1}{2}$};
            \filldraw(.5,.5)circle(2pt);
            \filldraw[fill=white](.5,1)circle(2pt);
        \end{tikzpicture}
    \end{subfigure}
    \begin{subfigure}{.3\textwidth}
        \centering
        \begin{tikzpicture}
            \draw[->](-.25,0)--(.75,0);
            \draw[->](0,-.25)--(0,3.25);
            \draw(0,0)node[below left]{$0$};
            \draw(.5,0)--(.5,1)--(0,3);
            \filldraw(0,0)circle(2pt);
            \filldraw(0,.5)circle(2pt)node[left]{$\frac{1}{2}$};
            \filldraw(0,1)circle(2pt)node[left]{$1$};
            \filldraw(0,1.5)circle(2pt)node[left]{$\frac{3}{2}$};
            \filldraw(0,2)circle(2pt)node[left]{$2$};
            \filldraw(0,2.5)circle(2pt)node[left]{$\frac{5}{2}$};
            \filldraw(0,3)circle(2pt)node[left]{$3$};
            \filldraw(.5,0)circle(2pt)node[below]{$\frac{1}{2}$};
            \filldraw[fill=white](.5,.5)circle(2pt);
            \filldraw(.5,1)circle(2pt);
        \end{tikzpicture}
    \end{subfigure}
    \caption{The Okounkov body of $\cO_X(1,1)$ over $X=\PP^1\times\PP^1$, with the flag chosen to be $X\supset C\supset \{p\}$, where $C$ is a smooth $(2,1)$-curve. The discrete Okounkov body $\Delta_k\subset\Delta\cap\ZZ^2/k$ is drawn in three cases: $k=1$, $k=2$ with $p$ not a ramification point, and $k=2$ with $p$ a ramification point, respectively.}
\lb{HighDimWeierFig}
\end{figure}
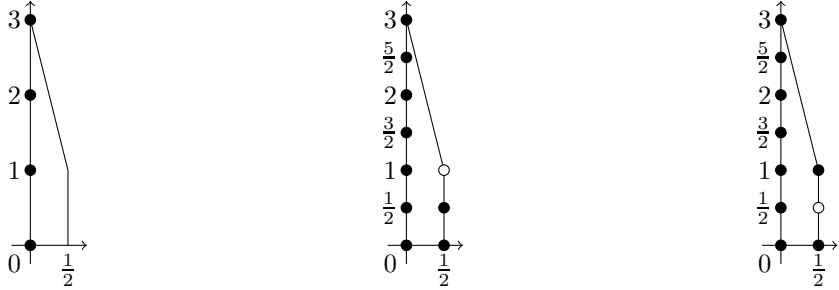

It seems remarkable that determining the asymptotics of $\alpha_k$ and $\delta_k$
is intimately related to higher-dimensional Weierstrass gap phenomena.
As shown here (recall \eqref{DeltaimprovedasympEq}--\eqref{alphaimprovedasympEq}), understanding such patterns 
would lead to improved asymptotic estimates on such thresholds.
In fact, Problem
\ref{WeierstrassProb} seems a rich and fascinating problem in its own right
given the vastness of the theory of Weierstrass gaps on curves. 
For instance, which Weierstrass gap patterns arise in surfaces?

\bigskip
\noindent
{\bf Acknowledgments.} 
Thanks to
H. Blum, I. Cheltsov, and
O. Regev
for helpful discussions.
Research supported in part 
by grants NSF DMS-1906370,2204347, BSF 2020329, 
NSFC 11890660, 12341105, and NKRDPC 2020YFA0712800.

\section{Preliminaries}\label{prelim}

\subsection{Singularities, valuations, log canonical thresholds}
Let $X$ be a normal projective variety and $L$ a line bundle over $X$. 
For $k\in\NN$ denote
\begin{equation}\label{R_k}
    R_k:=H^0\left(X,kL\right),
\end{equation}
(here $R_0=\CC$) and consider the graded ring
$
    R:=\bigoplus_{k=0}^\infty R_k.
$
Let
\begin{equation}\label{d_k}
    d_k:=\dim R_k.
\end{equation}
Recall the equality \cite[Lemma 1.4]{LM09},
\begin{equation}\label{numDiscOkBFull}
    \#\left(\Delta_k\right)=\dim R_k.
\end{equation}
We always assume $L$ is big, i.e., its volume
$$
    \Vol
    L
    :=n!\lim_{k\to\infty}\frac{d_k}{k^n}
$$
is positive, and $\NN(L)$ (Definition \ref{dkNLdeltakmDef}) contains all sufficiently large integers.
\begin{definition}\label{valuation}
    Let $K$ be a field, $K^*=K\setminus\{0\}$ its multiplictive group, and $\Gamma$ an abelian totally ordered group. A map $v:K\to\Gamma\cup\{\infty\}$ is called a {\it valuation} if $v(0)=\infty$ and $v|_{K^*}:K^*\to\Gamma$ is a group homomorphism satisfying
    $
        v\left(a+b\right)\geq\min\left\{v\left(a\right),v\left(b\right)\right\},
    $
    with equality if $v(a)\neq v(b)$. We say $v$ is trivial if $v|_{K^*}\equiv0$.
\end{definition}

Let $\CC(X)$ denote the function field of $X$, i.e., the field of rational functions on $X$. Denote
\beq\lb{ValX}
\ValX:=\{v\,:\, \hbox{$v$ is a non-trivial valuation on $\CC(X)$ trivial on
$\CC\subset\CC(X)$}\}.
\eeq

\begin{example}
\lb{VanOrdEx}
    Any prime divisor $F$ on $X$ determines a discrete valuation (i.e., integer valued) $\ord_F$ on $\CC(X)$, called the {\it vanishing order} along $F$. See \cite[\S II.6]{Har}.
\end{example}

Note that for any section $s\in R_k$, we can make sense of $v(s)$ ($v(D)$, respectively) by choosing a trivialization and expressing $s$ (the defining function of $D$, respectively) locally. More generally, by homomorphism property, $v(kD)=kv(D)$, allowing us to define $v$ on $\QQ$-divisors.

\begin{definition}
\lb{DivValDef}
    For any proper birational morphism $\pi:Y\to X$ where $Y$ is normal, a prime divisor $F$ on $Y$ is called a {\it divisor over $X$}. 
    Let $\ord_F$ denote the vanishing order along $F$.
    A valuation $v$ is called {\it divisorial} if it is in the form of $Cv_{F,\pi}:=C\,\ord_F\circ\pi^*$ for some $C>0$. Specifically, for a section $s\in R_k$,
    $$
        v\left(s\right):=C\,\ord_F\left(\pi^*s\right).
    $$
    Let
    \beq\lb{ValXdiv}
        \ValXdiv\subset\ValX
    \eeq
    denote the set of all divisorial valuations.
\end{definition}

\begin{example}
    Consider $X=\PP^2$. 
    Let $(z_1,z_2)$ be the coordinate on some chart 
    $\CC^2\hookrightarrow\PP^2$. For a holomorphic function
    $
        f\left(z\right)=\sum_{I=(i_1,i_2)}a_Iz_1^{i_1}z_2^{i_2}
    $
    around $0$, define $v(0):=\infty$ and
    $
        v\left(f\right):=\min_{a_I\neq0}\left(i_1+\sqrt2\,i_2\right).
    $
    This defines a valuation. Indeed, define an order on the multi-indices by $I\leq I'$ if
    $
        i_1+\sqrt2\,i_2\leq i_1'+\sqrt2\,i_2'.
    $
    Let
    $
        \deg:f\mapsto\min_{a_I\neq0}\left(i_1,i_2\right)
    $
    denote the multi-degree with respect to this order. Then
    $
        \deg fg=\deg f+\deg g,
    $
    and
    $
        \deg\left(f+g\right)\geq\min\left\{\deg f,\deg g\right\},
    $
    with equality if $\deg f\neq\deg g$.    
    However,
    $v(z_1)=1$ and $v(z_2)=\sqrt2$, while for any divisorial valuation $w_{F,\pi}$ and $f_1,f_2\in\CC(\PP^2)^*$, $\frac{w_{F,\pi}(f_1)}{w_{F,\pi}(f_2)}=\frac{\ord_F(f_1\circ\pi)}{\ord_F(f_2\circ\pi)}\in\QQ$
    by Example \ref{VanOrdEx}. Thus, $v\not\in\hbox{\rm Val}_{\PP^2}^{\Div}$.
\end{example}

\begin{definition}
    Let $X$ be $\QQ$-{\it Gorenstein}, i.e., $X$ is normal and $K_X$ is $\QQ$-Cartier. For any projective birational morphism $\pi:Y\to X$ where $Y$ is normal and a prime divisor $F$ on $Y$, its {\it log discrepancy}
    $$
        A\left(F\right):=1+\ord_F\left(K_Y-\pi^*K_X\right),
    $$
    Say $X$ has (at worst) {\it klt singularities} when $A>0$ on $\ValXdiv$ \eqref{ValXdiv}.
The log discrepancy can be naturally extended from $\ValXdiv$ to
$\ValX$ as a homogeneous function of degree 1 \cite[Theorem 3.1]{BFFU11}.
\end{definition}

\begin{definition}
\lb{lctDef}
    For $s\in R_k$ \eqref{R_k},  its {\it log canonical threshold} is
    $$
        \lct\left(s\right):=\sup\left\{c>0\,:\,\left|s\right|^{-2c}\text{ is locally integrable on }X\right\}.
    $$
    Note that since $\lct(s)=\lct(\lambda s)$ for $\lambda\neq0$, 
    lct descends to a function on divisors.
\end{definition}

The following gives an algebraic definition for log canonical threshold. See, e.g., \cite[\S 1.8]{BJ20}.
\begin{theorem}\label{lct def}
    Let $X$ be klt. For $s\in R_k$ \eqref{R_k},
    $
    \lct\left(s\right)=\inf_{v\in\ValXdiv}\frac{A\left(v\right)}{v\left(s\right)}=\inf_{v\in\ValX}\frac{A\left(v\right)}{v\left(s\right)}.
    $
    Moreover, infimum is obtained in $\ValXdiv$ \eqref{ValXdiv}.
\end{theorem}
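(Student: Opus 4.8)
The plan is to prove the equality $\lct(s) = \inf_{v \in \ValXdiv} A(v)/v(s) = \inf_{v \in \ValX} A(v)/v(s)$ and the statement that the infimum is attained in $\ValXdiv$ by reducing everything to a single log resolution of the pair $(X, \frac{1}{k}(s))$. First I would recall that by passing to a $\QQ$-factorial modification we may assume $X$ is $\QQ$-factorial, and then take a log resolution $\pi : Y \to X$ of the divisor $D := \frac{1}{k}(s)$ together with $K_X$, so that $\pi^*D + \sum_i \operatorname{Exc}_i$ has simple normal crossing support, with $\pi^* D = \sum_i b_i F_i$ and $K_Y - \pi^* K_X = \sum_i (A(F_i) - 1) F_i$ over the relevant prime divisors $F_i$ on $Y$ (the $F_i$ ranging over both exceptional divisors and strict transforms of components of $D$, with $b_i = 0$ for the purely exceptional ones not meeting $D$). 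The classical analytic computation of multiplier ideals on an SNC divisor (change of variables in the integral $\int |s|^{-2c}$, pulled back under $\pi$, picking up the Jacobian $\prod |y_i|^{2(A(F_i)-1)}$) then gives $\lct(s) = \min_i A(F_i)/b_i = \min_i A(F_i)/\operatorname{ord}_{F_i}(\pi^* s)$ — this is the standard Nadel/Kollár formula, which I would cite rather than re-derive.

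Given that formula, the inequality $\lct(s) \ge \inf_{v \in \ValX} A(v)/v(s)$ is trivial since the right side is an infimum over a larger set; and $\inf_{v \in \ValX} A(v)/v(s) \le \inf_{v \in \ValXdiv} A(v)/v(s)$ is again trivial by inclusion $\ValXdiv \subset \ValX$. So the content is to show $\inf_{v \in \ValXdiv} A(v)/v(s) \le \lct(s)$, and this is immediate from the resolution formula: each $F_i$ appearing there is a divisor over $X$, hence $v_{F_i} \in \ValXdiv$, and $\lct(s) = A(F_i)/v_{F_i}(s)$ for the minimizing index $i$, so not only is the chain of inequalities forced to be a chain of equalities, but the infimum over $\ValXdiv$ is actually a minimum, attained at $v_{F_i}$ for the index $i$ achieving the minimum in the resolution formula. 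Note one must use homogeneity of $A$ and of $v \mapsto v(s)$ in degree $1$ to see the ratio $A(v)/v(s)$ is unchanged under rescaling $v$, so "divisorial valuation" in the sense of Definition \ref{DivValDef} (allowing a positive constant $C$) causes no issue.

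The main obstacle — really the only nontrivial point — is the analytic-to-algebraic passage, i.e., justifying the resolution formula $\lct(s) = \min_i A(F_i)/b_i$ itself. If one wants a self-contained treatment this requires: (i) local integrability of $|s|^{-2c}$ near a point is detected after pulling back by the proper birational $\pi$ together with the Jacobian factor (a change-of-variables argument using that $\pi$ is an isomorphism over a dense open set and $X$ is normal with klt, hence rational, singularities so that no extra contribution comes from the singular locus of $X$ — here klt is used essentially); (ii) on $Y$, with SNC data, local integrability of $\prod |y_i|^{-2c b_i} \cdot \prod |y_i|^{2(A(F_i)-1)}$ near the origin reduces to the one-variable criterion $\int_{|w| < 1} |w|^{-2c b + 2(A-1)} < \infty \iff c b - (A-1) < 1 \iff c < A/b$, taken over all collections of coordinate hyperplanes through a point. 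Everything else is bookkeeping. I would present (i)–(ii) as a cited lemma (e.g., from \cite{BJ20} §1.8 or a standard reference on multiplier ideals), state the resolution formula, and then the proof of the theorem is the two-line inclusion/equality argument above, with the "moreover" clause falling out because the minimizing $F_i$ is a genuine divisor over $X$.
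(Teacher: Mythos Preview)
The paper does not prove this theorem at all: it is stated as background with a reference to \cite[\S1.8]{BJ20}. Your sketch of the resolution formula $\lct(s)=\min_i A(F_i)/\ord_{F_i}(\pi^*s)$ via the analytic change-of-variables is the standard route and is fine as far as it goes (the factor-of-$k$ slip between $b_i=\ord_{F_i}(\pi^*D)$ with $D=\frac1k(s)$ and $\ord_{F_i}(\pi^*s)$ is harmless bookkeeping).

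However, your ``chain of inequalities'' does not close. You establish
\[
\lct(s)\ \ge\ \inf_{\ValX}\tfrac{A}{v(s)},\qquad
\inf_{\ValX}\tfrac{A}{v(s)}\ \le\ \inf_{\ValXdiv}\tfrac{A}{v(s)},\qquad
\inf_{\ValXdiv}\tfrac{A}{v(s)}\ \le\ \lct(s),
\]
and all three point the same way: they are consistent with strict inequalities $\inf_{\ValX}<\inf_{\ValXdiv}<\lct(s)$ and do not ``force a chain of equalities.'' What is missing is the reverse bound $A(v)\ge \lct(s)\cdot v(s)$ for \emph{every} $v$. For $v\in\ValXdiv$ this follows from the resolution formula in its full strength (the minimum $\min_i A(F_i)/\ord_{F_i}$ is the same on \emph{any} log resolution, so pick one that extracts the center of $v$); you should say this explicitly rather than treat it as already done. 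For general $v\in\ValX$ it is genuinely extra input: one needs the Jonsson--Musta\c{t}\u{a}/Boucksom--de Fernex--Favre extension of $A$ to $\ValX$ (cf.\ the remark after the definition of log discrepancy in the paper, citing \cite{BFFU11}), whose defining property is exactly that $A(v)\ge \sum c_i\,v(D_i)$ whenever $(X,\sum c_iD_i)$ is lc. Without invoking this, the equality $\inf_{\ValX}=\lct(s)$ is unproved.
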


\subsection{The jumping function on the discrete Okounkov bodies}

References for this subsection are \cite{HM,BC11,BJ20}.

\begin{definition}
\lb{JumpingDef}
    For $v\in\ValX$ \eqref{ValX}, define
    $$
        \cF_v^\lambda R_k:=\left\{s\in R_k\,:\,v\left(s\right)\geq \lambda\right\}.
    $$
    The {\it jumping function of $R$}, $j=j_{\bullet,\bullet}:
    \ValX\ra\cup_{k\in\NN(L)}\ZZ_+^{d_k}$,
    with $j(v)=\{\{j_{k,m}(v)\}_{m\in\{1,\ldots,d_k\}}\}
    _{k\in\NN(L)}$, 
    and
    $$
        j_{k,\ell}(v):=\inf\left\{\lambda\geq0\,:\,\dim\cF_v^\lambda R_k\leq\ell-1\right\}, \q
\hbox{for $\ell\in\{1,\ldots,d_k\}$.}
    $$
\end{definition}
Note that
\begin{align}
        j_{k,\ell}(v)&=\inf\left\{\lambda\geq0\,:\,\dim\cF_v^\lambda R_k\leq\ell-1\right\}
        =\sup\left\{\lambda\geq0\,:\,\dim\cF_v^\lambda R_k\geq\ell\right\}\nonumber\\
        &=\max\left\{\lambda\geq0\,:\,\dim\cF_v^\lambda R_k\geq\ell\right\},
        \label{jumping number}
\end{align}
with the supremum achieved because of left-continuity of $\dim\cF_v^\lambda R_k$. Left-continuity follows from the fact that $\cF_v^\lambda R_k$ is decreasing with respect to $\lambda$ and
$
    \cF_v^\lambda R_k=\bigcap_{\lambda'<\lambda}\cF_v^{\lambda'}R_k.
$
Moreover, by \eqref{jumping number},
\beq\lb{jumpingEq}
    j_{k,1}\geq\cdots\geq j_{k,d_k}.
\eeq
For instance, if $v$ is a divisorial valuation
associated to a divisor $D\subset X$, then
$j_{k,1}$ is the largest vanishing order of a section in $R_k$ along $D$.

\begin{remark}
    Sometimes jumping numbers are defined as an increasing sequence \cite[p. 256]{HM},\cite{BJ20}. Here they are defined in decreasing order in anticipation of Definition \ref{upper S}.
\end{remark}

\subsection{The \texorpdfstring{$S_{k,m}$}{Skm} invariants}

The next definition generalizes the ``$k$-th pseudoeffective threshold" $S_{k,1}$ ($m=1$)
and the $k$-th expected vanishing order $S_{k,d_k}$ ($m=d_k$).
It plays a central r\^ole in this article.

\begin{definition}\lb{SkmDef}
    For $k\in\NN(L)$ and $m\in\{1,\ldots,d_k\}$, 
    denote by $S_{k,m}:\ValX\ra[0,\infty]$,
    $$
        S_{k,m}
        :=\frac{1}{km}\sum_{\ell=1}^mj_{k,\ell}.
    $$
\end{definition}

The notation $S_{k,m/d_k}$ would have perhaps been more suggestive, in view of 
Proposition \ref{S_tau def}. For brevity, we stick with $S_{k,m}$.

\begin{definition}\lb{S0Def}
The {\it maximal vanishing order} is the function $\cS_0:\ValX\ra[0,\infty]$,
    \begin{equation}\label{def T}
        \cS_0:=\sup_{k\in\NN(L)}S_{k,1}=\sup_k\frac{j_{k,1}}{k}.
    \end{equation}
Let $\sigma:\ValX\ra[0,\infty)$,
$$\sigma(v):=\inf_k\frac{j_{k,d_k}}{k}.$$
be the {\it minimal vanishing order of $v$} \cite[Definition 1.1, p. 79]{Nak}.
    Let
    \beq\lb{ValXfin}
        \ValXfin:=\left\{v\in\ValX\,:\,\cS_0\left(v\right)<\infty\right\}
    \eeq
    be the set of non-trivial {\it valuations with finite vanishing order} (also
    called valuations of linear growth).    
    The {\it expected vanishing order} is the function $\cS_1:\ValXfin\ra[0,\infty)$,
    $$
        \cS_1
        :=\lim_{k\to\infty}S_{k,d_k}
    $$
    (the limit exists \cite[Corollary 3.6]{BJ20}).
\end{definition}
Note that $\ValXdiv\subset\ValXfin$ (recall \eqref{ValXdiv}).
Restricted to $\ValXdiv$, $\cS_0$ is the pseudoeffective threshold, and 
$\cS_1$ is the expected~vanishing~order.

\begin{remark}\label{Fkt rmk}
The following is well-known; we record it since it will
be used several times below in other instances.
    Recall Fekete's Lemma: let $\{c_k\}_{k=1}^\infty$ be sub-additive (super-additive, respectively), i.e.,
    $
        c_{i+j}\leq c_i+c_j \q (c_{i+j}\geq c_i+c_j, \hbox{\rm respectively}).
    $
    Then
    $  \lim_{k\to\infty}\frac{c_k}{k}
        $
        equals 
        $\inf_k\frac{c_k}{k}
    $ ($\sup_k\frac{c_k}{k}$, respectively).
    Let $v\in\ValX$. For $k,k'\in\NN$, $s\in R_k$, $s\in R_{k'}$, $s\otimes s'\in R_{k+k'}$ and (recall the homomorphism property in Definition \ref{valuation})
    $
        v\left(s\otimes s'\right)=v\left(s\right)+v\left(s'\right).
    $
    Consequently,
    \begin{equation}\label{fkt}
        kS_{k,1}\left(v\right)+k'S_{k',1}\left(v\right)\leq\left(k+k'\right)S_{k+k',1}\left(v\right),
    \end{equation}
    so \eqref{def T} can be improved to
    $
        \lim_{k\to\infty}S_{k,1}\left(v\right)=\sup_kS_{k,1}\left(v\right)=\cS_0\left(v\right).
    $
\end{remark}

Notice that $kS_{k,m}$ is the average of the largest $m$ jumping numbers, so by \eqref{jumpingEq}, 
    \begin{equation}\label{m monotonicity}
    m\leq m'
    \q \Rightarrow \q
        S_{k,m}\geq S_{k,m'}.
    \end{equation}
Going back to our example of $v$ associated to $D\subset X$,
suppose one is tasked with finding $m$ linearly independent 
sections $s_1,\ldots,s_m$ 
of $H^0(X,kL)$ such that their average vanishing order along $D$ is maximized.
That maximum is achieved and equal $S_{k,m}$. This is the content of the next lemma for general 
valuations (this generalizes \cite[Lemma 2.2]{FO18},\cite[Lemma 3.5]{BJ20} 
see~also~\cite[Lemma 2.7]{CRZ}):

\begin{lemma}\label{compatible basis lem}
Let $k\in\NN(L)$ and $m\in\{1,\ldots,d_k\}$.
    For $v\in\ValX$,
    \begin{equation}\label{compatible basis}
        kmS_{k,m}\left(v\right)=\max_{\substack{\left\{s_\ell\right\}_{\ell=1}^m\subset R_k\\\mbox{\smlsev linearly independent}}}v\left(\sum_{\ell=1}^m\left(s_\ell\right)\right).
    \end{equation}
    For any maximizing linearly independent family, $v(s_1),\ldots,v(s_m)$ are the largest $m$ jumping numbers.
\end{lemma}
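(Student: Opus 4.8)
\textbf{Proof plan for Lemma \ref{compatible basis lem}.}
The plan is to prove the identity by establishing the two inequalities between the left-hand side $kmS_{k,m}(v) = \sum_{\ell=1}^m j_{k,\ell}(v)$ and the right-hand side, the maximal value of $v\big(\sum_{\ell=1}^m (s_\ell)\big) = \sum_{\ell=1}^m v(s_\ell)$ over linearly independent families $\{s_\ell\}_{\ell=1}^m \subset R_k$. Both quantities are finite when $v \in \ValXfin$, and when $\cS_0(v) = \infty$ one checks both sides are infinite, so we may assume finiteness throughout.

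For the inequality ``$\ge$'' (i.e., LHS is an upper bound for the max), I would argue that for \emph{any} linearly independent family $s_1, \ldots, s_m$, ordered so that $v(s_1) \ge v(s_2) \ge \cdots \ge v(s_m)$, one has $v(s_\ell) \le j_{k,\ell}(v)$ for each $\ell$. Indeed, the sections $s_1, \ldots, s_\ell$ span an $\ell$-dimensional subspace on which $v \ge v(s_\ell)$ (using the ultrametric inequality $v(a+b) \ge \min\{v(a), v(b)\}$ to see every element of the span has valuation $\ge v(s_\ell)$), so $\dim \cF_v^{v(s_\ell)} R_k \ge \ell$, which by \eqref{jumping number} gives $v(s_\ell) \le j_{k,\ell}(v)$. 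Summing over $\ell$ yields $\sum v(s_\ell) \le \sum j_{k,\ell}(v) = kmS_{k,m}(v)$.

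For the inequality ``$\le$'' (i.e., the max is at least the LHS), I would construct a \emph{compatible basis}: a basis $e_1, \ldots, e_{d_k}$ of $R_k$ adapted to the filtration $\cF_v^\bullet R_k$, so that for each $\ell$, the span of $e_1, \ldots, e_\ell$ equals $\cF_v^{j_{k,\ell}(v)} R_k$ (or at least so that $v(e_\ell) = j_{k,\ell}(v)$). Concretely, process the finitely many jumping values $\lambda_1 > \lambda_2 > \cdots$ of $\dim\cF_v^\lambda R_k$ (finite in number since this dimension is a nonincreasing integer-valued left-continuous step function bounded by $d_k$), and at each step extend a basis of the smaller space $\cF_v^{\lambda_i}R_k$ to one of $\cF_v^{\lambda_{i+1}}R_k$; the new basis vectors all have valuation exactly $\lambda_{i+1}$ (they lie in $\cF_v^{\lambda_{i+1}}$ but not in $\cF_v^{\lambda_i} \supseteq \cF_v^{\lambda_{i+1}+\epsilon}$). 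Taking $s_1, \ldots, s_m$ to be the first $m$ elements of this basis, they are linearly independent and $\sum_{\ell=1}^m v(s_\ell) = \sum_{\ell=1}^m j_{k,\ell}(v) = kmS_{k,m}(v)$, giving the reverse inequality. Combining the two inequalities proves \eqref{compatible basis}, and the final sentence of the lemma follows since in any maximizing family the ordered valuations must satisfy $v(s_\ell) \le j_{k,\ell}(v)$ with the sum of both sides equal, forcing equality term by term.

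The main obstacle — really the only nonroutine point — is the careful bookkeeping in the construction of the compatible basis, specifically verifying that the newly adjoined basis vectors at each jump have valuation \emph{exactly} the correct jumping number rather than merely $\ge$ it; this uses that $\cF_v^\lambda R_k$ is constant for $\lambda$ in each half-open interval between consecutive jumps together with left-continuity, so a vector in $\cF_v^{\lambda_{i+1}} R_k \setminus \cF_v^{\lambda_i} R_k$ has valuation in $[\lambda_{i+1}, \lambda_i)$, and if its valuation were $> \lambda_{i+1}$ it would already lie in $\cF_v^{\lambda_i}R_k$ by the step-function property, a contradiction. Everything else is an immediate consequence of the ultrametric inequality and the definitions in Definition \ref{JumpingDef} and Definition \ref{SkmDef}.
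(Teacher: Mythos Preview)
Your proof is correct and follows essentially the same approach as the paper's: both directions are argued identically, with the upper bound via $\dim\cF_v^{v(s_\ell)}R_k\ge\ell$ and the lower bound via an inductive construction of sections $s_\ell\in\cF_v^{j_{k,\ell}(v)}R_k$ not in the span of the previous ones. One small simplification: you work harder than necessary to get $v(s_\ell)=j_{k,\ell}(v)$ \emph{exactly} in the constructed family, but the paper only needs $v(s_\ell)\ge j_{k,\ell}(v)$ there, since the already-proved upper bound then forces equality in the sum (and hence termwise), which also immediately yields the final sentence of the lemma.
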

\begin{proof}
    We may assume $v(s_1)\geq\cdots\geq v(s_m)$. Then for any $\ell\geq\ell'$,
    $$
        s_{\ell'}\in\cF_v^{v\left(s_\ell\right)}R_k.
    $$
    Since $\{s_\ell\}_{\ell=1}^m$ is linearly independent,
    $
        \dim\cF_v^{v\left(s_\ell\right)}R_k\geq\ell.
    $
    By \eqref{jumping number},
    $
        j_{k,\ell}\left(v\right)\geq v\left(s_\ell\right).
    $
    Therefore,
    $$
        v\left(\sum_{\ell=1}^m\left(s_\ell\right)\right)=\sum_{\ell=1}^mv\left(s_\ell\right)\leq\sum_{\ell=1}^m j_{k,\ell}\left(v\right)=kmS_{k,m}\left(v\right).
    $$
    To see the converse, construct a sequence $\{s_\ell\}_{\ell=1}^m$ inductively as follows. By \eqref{jumping number},
    $
        \dim\cF_v^{j_{k,1}\left(v\right)}\geq 1,
    $
    so we can find a non-zero section $s_1\in\cF_v^{j_{k,1}\left(v\right)}$.
    
    For each $\ell\geq2$, suppose $s_1,\ldots,s_{\ell-1}$ have been chosen. By \eqref{jumping number},
    $
        \dim\cF_v^{j_{k,\ell}\left(v\right)}\geq\ell,
    $
    so we can find $s_\ell\in\cF_v^{j_{k,\ell}\left(v\right)}$ that is not spanned by $s_1,\ldots,s_{\ell-1}$.
    
    Now, by construction, we have chosen a family $\{s_\ell\}_{\ell=1}^m$ such that each
    \begin{equation}\label{ind assumption}
        s_\ell\in\cF_v^{j_{k,\ell}\left(v\right)}
    \end{equation}
    is not spanned by $s_1,\ldots,s_{\ell-1}$, i.e., it is a linearly independent family $\{s_\ell\}_{\ell=1}^m$ satisfying (by rewriting \eqref{ind assumption} using \eqref{jumping number})
    $
        v\left(s_\ell\right)\geq j_{k,\ell}\left(v\right).
    $
    Hence
    $$
        v\left(\sum_{\ell=1}^m\left(s_\ell\right)\right)=\sum_{\ell=1}^mv\left(s_\ell\right)\geq\sum_{\ell=1}^mj_{k,\ell}\left(v\right)=kmS_{k,m}\left(v\right).
    $$
\end{proof}

\begin{definition}\label{compat def}
    Let $k\in\NN(L)$, $m\in\{1,\ldots,d_k\}$, and
     $v\in\ValX$ \eqref{ValX}. A linearly independent family $\{s_\ell\}_{\ell=1}^m$ is said to be 
    {\it compatible with} $\cF_v$ if
    $
        v\left(\sum_{\ell=1}^m\left(s_\ell\right)\right)=kmS_{k,m}\left(v\right).
    $
\end{definition}

\begin{remark}\label{adapted extension}
    Let $m'\geq m$. 
    The inductive construction of Lemma \ref{compatible basis lem} shows that a
    compatible family $\{s_\ell\}_{\ell=1}^m$ can be extended to a larger
    compatible family $\{s_\ell\}_{\ell=1}^{m'}\supset \{s_\ell\}_{\ell=1}^m$.
\end{remark}

\subsection{Measures associated to a 
valuation}

\begin{definition}
\lb{GradedLinearSerierDef}
For each $k\in\ZZ_+, 
t\in\RR_+$, and $v\in\ValX$, denote by (recall Definition 
\ref{JumpingDef})
    $$
        V_{v,k}^t:=\cF_v^{kt}R_k=
        \left\{s\in R_k\,:\,v\left(s\right)\geq kt\right\},
    $$
the $\CC$-vector subspace of $R_k$, and by
$$
V_{v,\bullet}^t:=\{V_{v,k}^t\}_{k\in\ZZ_+},
$$
the {\it $v$-graded linear series of $L$} \cite[\S2.4]{BJ20}. The {\it volume} of this graded linear series is
    \beq\lb{VolGradedEq}
        \Vol V_{v,\bullet}^t:=\lim_{k\to\infty}\frac{n!}{k^n}\dim V_{v,k}^t.
    \eeq
\end{definition}

The Brunn--Minkowski inequality implies \cite[Lemma 2.22]{BKMS} (see also \cite[Proposition 2.3]{BJ20}):
\begin{proposition}\label{V.t}
Recall Definition \ref{S0Def}.
     $t\mapsto(\Vol V_{v,\bullet}^t)^\frac{1}{n}$ is concave on $[0,\cS_0(v))$.
\end{proposition}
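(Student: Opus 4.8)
The plan is to reduce the statement to the classical Brunn--Minkowski inequality for convex bodies, applied to the Okounkov bodies of the graded linear series $V_{v,\bullet}^t$. First I would recall that for a valuation $v\in\ValX$ and $t\in[0,\cS_0(v))$, the graded linear series $V_{v,\bullet}^t=\{V_{v,k}^t\}_k$ contains an ample series (it is nonzero for all large $k$ precisely because $t<\cS_0(v)$, by Remark \ref{Fkt rmk} and the definition of $\cS_0$ as the supremum of the $S_{k,1}$), so the Okounkov body construction applies: there is a convex body $\Delta_v^t\subset\RR^n$, obtained by fixing an admissible flag and the associated lexicographic valuation $\nu$, with
$$
\Vol V_{v,\bullet}^t=n!\,\bigl|\Delta_v^t\bigr|,
$$
where $|\cdot|$ denotes Lebesgue measure on $\RR^n$. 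This is the standard fact that the normalized dimension growth of a graded linear series equals the volume of its Okounkov body (Lazarsfeld--Musta\c{t}\u{a}, Kaveh--Khovanskii).

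Next I would establish the key inclusion
$$
\tfrac12\Delta_v^{t_0}+\tfrac12\Delta_v^{t_1}\subseteq \Delta_v^{(t_0+t_1)/2}
\qquad\text{for }t_0,t_1\in[0,\cS_0(v)),
$$
which rests on two observations. First, the $v$-graded pieces are closed under multiplication in the appropriate degree: if $s\in V_{v,k}^{t_0}$ and $s'\in V_{v,k}^{t_1}$, then, using the homomorphism property of $v$ (Definition \ref{valuation}), $v(s\otimes s')=v(s)+v(s')\ge k(t_0+t_1)$, so $s\otimes s'\in V_{v,2k}^{(t_0+t_1)/2}$. Second, the lexicographic valuation $\nu$ is additive, $\nu(s\otimes s')=\nu(s)+\nu(s')$, so that the image semigroup of $V_{v,\bullet}^{(t_0+t_1)/2}$ contains the (Minkowski) sum of the image semigroups of $V_{v,\bullet}^{t_0}$ and $V_{v,\bullet}^{t_1}$ in the relevant graded degrees; passing to the closed convex hull after rescaling by $1/k$ gives the stated inclusion of Okounkov bodies. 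Then the Brunn--Minkowski inequality for convex bodies in $\RR^n$ yields
$$
\bigl|\Delta_v^{(t_0+t_1)/2}\bigr|^{1/n}\ge
\bigl|\tfrac12\Delta_v^{t_0}+\tfrac12\Delta_v^{t_1}\bigr|^{1/n}\ge
\tfrac12\bigl|\Delta_v^{t_0}\bigr|^{1/n}+\tfrac12\bigl|\Delta_v^{t_1}\bigr|^{1/n},
$$
which, after multiplying by $(n!)^{1/n}$, is exactly midpoint-concavity of $t\mapsto(\Vol V_{v,\bullet}^t)^{1/n}$ on $[0,\cS_0(v))$. Since this function is also monotone (hence locally bounded, in fact bounded by $(\Vol L)^{1/n}$) and midpoint-concave, it is concave on the interval.

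The main obstacle, and the step deserving the most care, is the first reduction: verifying that $V_{v,\bullet}^t$ genuinely satisfies the hypotheses under which the Okounkov-body/volume dictionary holds, i.e., that it is a graded linear series ``with bounded support'' or ``containing an ample series'' in the sense needed for $\Vol V_{v,\bullet}^t=n!|\Delta_v^t|$ and for the body to be convex. Concretely one must check $V_{v,k}^t\ne 0$ for all sufficiently large $k$ when $t<\cS_0(v)$ --- which follows from $\cS_0=\lim_k S_{k,1}=\sup_k S_{k,1}$ (Remark \ref{Fkt rmk}) together with Lemma \ref{compatible basis lem} producing a section of vanishing order $\ge kS_{k,1}(v)>kt$ --- and that the multiplicativity is preserved at the level of the full graded ring, not just fiberwise. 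All of this is standard in the references cited in the statement (\cite{BKMS}, \cite{BJ20}); alternatively, one can bypass Okounkov bodies entirely and invoke the abstract Brunn--Minkowski inequality for volumes of graded linear series of Lazarsfeld--Musta\c{t}\u{a} applied to the product series, which is precisely the formulation used in \cite[Lemma 2.22]{BKMS}. I would cite that directly and spend the remaining effort only on the multiplicativity inclusion $V_{v,k}^{t_0}\cdot V_{v,k'}^{t_1}\subseteq V_{v,k+k'}^{(kt_0+k't_1)/(k+k')}$, which is the one genuinely valuation-theoretic input.
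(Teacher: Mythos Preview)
Your proposal is correct and takes essentially the same approach as the paper, which does not give its own proof but simply cites \cite[Lemma 2.22]{BKMS} and \cite[Proposition 2.3]{BJ20} with the remark that ``the Brunn--Minkowski inequality implies'' the result. Your write-up is precisely an unpacking of that citation: the multiplicativity inclusion $V_{v,k}^{t_0}\cdot V_{v,k'}^{t_1}\subset V_{v,k+k'}^{(kt_0+k't_1)/(k+k')}$, the resulting Minkowski-sum inclusion of Okounkov bodies, and Brunn--Minkowski are exactly how those references proceed.
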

Let
$
    \delta_x
$
denote the Dirac function supported on $x\in\RR$ and consider the sequence of
empirical measures on $\RR$ (recall (\ref{jumping number}))
\beq\lb{muvkEq}
    \mu_{v,k}=\frac{1}{d_k}\sum_{\ell=1}^{d_k}\delta_{\frac{1}{k}j_{k,\ell}(v)}.
\eeq
Recall the following \cite[Theorem 1.11]{BC11} (see also \cite[Corollary 2.4, Theorem 2.8]{BJ20}).
\begin{theorem}\label{mu}
    For $v\in\ValXfin$ \eqref{ValXfin}, 
    $\mu_{v,k}$ converges weakly to the probability measure on $\RR$,
    \beq\lb{muvEq}
        \mu_v=-\frac{d}{dt}\frac{\Vol V_{v,\bullet}^t}{\Vol L}.
    \eeq
    By concavity (and hence locally Lipschitz regularity) of $(\Vol V_{v,\bullet}^t)^\frac{1}{n}$ (recall Proposition \ref{V.t}), $\mu_v$ is absolutely continuous on $[0,\cS_0(v))$ with respect to Lebesgue measure, and
    \begin{equation}\label{mu charge}
        \mu_v\left(\left\{\cS_0\left(v\right)\right\}\right)=\lim_{t\to\cS_0\left(V\right)^-}\frac{\Vol V_{v,\bullet}^t}{\Vol L}.
    \end{equation}
\end{theorem}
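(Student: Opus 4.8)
The plan is to identify the empirical measures $\mu_{v,k}$ of \eqref{muvkEq} through their tail functions, pass to the limit, and then match the limiting tail with that of $\mu_v$ in \eqref{muvEq}. First I would record, for every $t\ge 0$, the elementary identity
$\mu_{v,k}\big([t,\infty)\big)=\tfrac1{d_k}\#\{\ell : j_{k,\ell}(v)\ge kt\}=\dim V_{v,k}^t/d_k$.
Indeed, by \eqref{jumpingEq} the set $\{\ell : j_{k,\ell}(v)\ge kt\}$ is an initial segment $\{1,\dots,N\}$, and by the characterization \eqref{jumping number} of $j_{k,\ell}(v)$ as the largest $\lambda\ge0$ with $\dim\cF_v^{\lambda}R_k\ge\ell$, together with the monotonicity (and left-continuity) of $\lambda\mapsto\dim\cF_v^{\lambda}R_k$, one has $j_{k,\ell}(v)\ge kt\iff\dim\cF_v^{kt}R_k\ge\ell$; hence $N=\dim\cF_v^{kt}R_k=\dim V_{v,k}^t$ (recall Definition \ref{GradedLinearSerierDef}). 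Since $j_{k,d_k}(v)\ge0$ and $j_{k,1}(v)/k=S_{k,1}(v)\le\cS_0(v)<\infty$ (as $v\in\ValXfin$), all $\mu_{v,k}$ are supported in the fixed compact interval $[0,\cS_0(v)]$.

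Next, set $G(t):=\Vol V_{v,\bullet}^t/\Vol L$. For $t\in[0,\cS_0(v))$ the limit defining $\Vol V_{v,\bullet}^t$ in \eqref{VolGradedEq} exists (the theory of Okounkov bodies of graded linear series), so $\dim V_{v,k}^t/d_k=\big(\tfrac{n!}{k^n}\dim V_{v,k}^t\big)/\big(\tfrac{n!}{k^n}d_k\big)\to G(t)$; for $t>\cS_0(v)$ one has $kt>j_{k,1}(v)$, hence $V_{v,k}^t=0$ for all $k$ and both sides vanish; and by monotonicity of the tail in $t$, $\limsup_k\mu_{v,k}\big([\cS_0(v),\infty)\big)\le\lim_{s\to\cS_0(v)^-}G(s)$. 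Thus the tail functions of $\mu_{v,k}$ converge to $G$ at every point $t\ne\cS_0(v)$.

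Then I would verify that $G$ is the tail function of $\mu_v$. The function $G$ is nonincreasing, equals $1$ on $(-\infty,0]$ (since $V_{v,k}^0=R_k$), equals $0$ on $(\cS_0(v),\infty)$, is continuous on $(0,\cS_0(v))$, and is continuous at $0$ (from $G\le1=G(0)$ together with the concavity of $G^{1/n}$, Proposition \ref{V.t}); hence its only discontinuity is a downward jump at $\cS_0(v)$, of size $\lim_{s\to\cS_0(v)^-}G(s)$. Consequently the distributional derivative $-dG/dt$ is a positive Borel measure of total mass $G(-\infty)-G(+\infty)=1$: this is the probability measure $\mu_v$ of \eqref{muvEq}; it is absolutely continuous on $[0,\cS_0(v))$ because $G^{1/n}$ is concave, hence locally Lipschitz, there (Proposition \ref{V.t}); and its single atom, at $\cS_0(v)$, carries mass equal to the jump, i.e.\ \eqref{mu charge}. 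In particular the tail function of $\mu_v$ agrees with $G$ at every $t\ne\cS_0(v)$.

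Finally, combining the previous steps, $\mu_{v,k}\big([t,\infty)\big)$ converges to the tail function of $\mu_v$ at every continuity point of the latter (all $t\ne\cS_0(v)$, and also $\cS_0(v)$ itself when it is a continuity point, thanks to the $\limsup$ bound and monotonicity); since all the $\mu_{v,k}$ and $\mu_v$ live on the fixed compact $[0,\cS_0(v)]$, the standard equivalence between weak convergence and pointwise convergence of distribution functions at continuity points yields $\mu_{v,k}\rightharpoonup\mu_v$. (Alternatively, one could run the Boucksom--Chen argument via the concave transform of the filtration $\cF_v$ on the global Okounkov body of $R$, which repackages the same content.) The one genuinely non-elementary input, and hence the main obstacle, is the existence of the limit $\Vol V_{v,\bullet}^t=\lim_k\tfrac{n!}{k^n}\dim V_{v,k}^t$ on $[0,\cS_0(v))$ — equivalently, that each truncated series $V_{v,\bullet}^t$ carries a well-defined, full-dimensional Okounkov body; granting this (together with the concavity of Proposition \ref{V.t}, itself a Brunn--Minkowski consequence), the tail identity and the measure-theoretic bookkeeping, including the atom at $\cS_0(v)$, are routine.
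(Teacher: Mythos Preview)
Your argument is correct. Note, however, that the paper does not give its own proof of this statement: it is recorded as a citation to \cite[Theorem 1.11]{BC11} (with \cite[Corollary 2.4, Theorem 2.8]{BJ20} as supplementary references), so there is no in-paper proof to compare against.

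That said, your tail-function approach is exactly the skeleton of the cited Boucksom--Chen argument. The identity $\mu_{v,k}([t,\infty))=\dim V_{v,k}^t/d_k$ is precisely what the paper later records (in the language of $\iota_{v,k}$ and $F_{v,k}$) around \eqref{iota def}--\eqref{FvkiotaEq}, and your passage to the limit via \eqref{VolGradedEq} plus Portmanteau is the standard route. You are also right to flag the one substantive input: the existence of $\Vol V_{v,\bullet}^t$ for $t\in[0,\cS_0(v))$, which requires that $V_{v,\bullet}^t$ contain an ample series (this is where bigness of $L$ and the Lazarsfeld--Musta\c{t}\u{a}/Kaveh--Khovanskii machinery enter, as in \cite[\S2.4]{BJ20}). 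The alternative you mention---pushing forward Lebesgue measure on $\Delta$ by the concave transform $G_v$, as in \eqref{muG}---is how \cite{BC11} actually packages the same computation, and has the advantage of making the concavity and the support structure of $\mu_v$ geometrically transparent; your direct tail argument is equivalent and arguably more self-contained.
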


We record two consequences of the concavity of the function $t\mapsto(\Vol V_{v,\bullet}^t)^\frac{1}{n}$ on $[0,\cS_0(v))$ (Proposition \ref{V.t}).
The first was first used by Fujita--Odaka \cite[Lemma 1.2]{FO18},
while the second is straightforward.

\begin{claim}\label{conv lem 1}
    For $t\in[0,\cS_0(v))$,
    $\disp
        \Vol V_{v,\bullet}^t\geq\left(1-\frac{t}{\cS_0\left(v\right)}\right)^n\VolL.
    $    
\end{claim}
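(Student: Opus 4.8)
The plan is to read the inequality off directly from the concavity statement in Proposition \ref{V.t}, using only the value of the volume at $t=0$ and the (trivial) nonnegativity of the volume on $[0,\cS_0(v))$. Write $g(t):=(\Vol V_{v,\bullet}^t)^{1/n}$, which by Proposition \ref{V.t} is concave and nonnegative on $[0,\cS_0(v))$. First I would record the one boundary input that is actually needed: at $t=0$ one has $V_{v,k}^0=\cF_v^0 R_k=R_k$, since every nonzero section of $kL$ has nonnegative $v$-value (equivalently $j_{k,d_k}(v)\geq 0$, by Definition \ref{JumpingDef} and \eqref{jumping number}); hence $\Vol V_{v,\bullet}^0=\VolL$ and $g(0)=(\VolL)^{1/n}$. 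No information about the asymptotics of $g$ near $\cS_0(v)$, nor about the charge $\mu_v(\{\cS_0(v)\})$, is required.

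The main (and essentially only) step is then a one-line concavity estimate. Fix $t\in(0,\cS_0(v))$ and pick any $s$ with $t<s<\cS_0(v)$. Writing $t$ as the convex combination $t=\tfrac{s-t}{s}\cdot 0+\tfrac{t}{s}\cdot s$ and applying concavity of $g$ gives $g(t)\geq\tfrac{s-t}{s}g(0)+\tfrac{t}{s}g(s)\geq\tfrac{s-t}{s}g(0)$, where in the last step one simply discards the nonnegative term $\tfrac{t}{s}g(s)$. Letting $s\uparrow\cS_0(v)$ (with the convention $t/\cS_0(v)=0$ when $\cS_0(v)=\infty$, in which case $\tfrac{s-t}{s}\to 1$) yields $g(t)\geq(1-t/\cS_0(v))g(0)$, and raising both sides to the $n$-th power---legitimate since both are nonnegative---gives $\Vol V_{v,\bullet}^t\geq(1-t/\cS_0(v))^n\VolL$; the case $t=0$ is an equality. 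This is precisely the argument of Fujita--Odaka \cite[Lemma 1.2]{FO18} attributed in the text.

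I do not expect any genuine obstacle here: the statement is a direct corollary of concavity plus the normalization $\Vol V_{v,\bullet}^0=\VolL$, which is exactly why it is recorded as a claim rather than a proposition. The only points worth a sentence of care are the identification $V_{v,k}^0=R_k$ for all $k$ (hence the value of $g(0)$) and the passage to the limit $s\uparrow\cS_0(v)$, both immediate from the definitions.
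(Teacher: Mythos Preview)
Your proof is correct and is essentially identical to the paper's: both write $t$ as a convex combination of $0$ and a point $s$ (the paper calls it $T_0$), apply the concavity of $(\Vol V_{v,\bullet}^t)^{1/n}$ from Proposition \ref{V.t}, drop the nonnegative term, and let $s\uparrow\cS_0(v)$. Your added remarks on why $V_{v,k}^0=R_k$ and on the case $\cS_0(v)=\infty$ are harmless elaborations not present in the paper's terser version.
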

\begin{proof}
    Pick $T_0\in[t,\cS_0(v))$. By Proposition \ref{V.t},
    $
        \left(\Vol V_{v,\bullet}^t\right)^\frac{1}{n}\geq\left(1-\frac{t}{T_0}\right)\left(\Vol V_{v,\bullet}^0\right)^\frac{1}{n}+\frac{t}{T_0}\left(\Vol V_{v,\bullet}^{T_0}\right)^\frac{1}{n}\geq\left(1-\frac{t}{T_0}\right)\left(\VolL\right)^\frac{1}{n}.
    $
    Since this holds for any $T_0\in[t,\cS_0(v))$,
    $
        \left(\Vol V_{v,\bullet}^t\right)^\frac{1}{n}\!\geq\!\sup_{T_0\in\left[t,\cS_0\left(v\right)\right)}\left(1-\frac{t}{T_0}\right)\left(\VolL\right)^\frac{1}{n}=\left(1-\frac{t}{\cS_0\left(v\right)}\right)\left(\VolL\right)^\frac{1}{n}.
    $
\end{proof}
\begin{claim}\label{conv lem 2}
    Let $s\in[0,\cS_0(v))$. If
    $
        \Vol V_{v,\bullet}^s<\VolL,
    $
    then for $t\in(s,\cS_0(v))$,
    $
        \Vol V_{v,\bullet}^t<\Vol V_{v,\bullet}^s.
    $
\end{claim}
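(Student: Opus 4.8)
\textbf{Proof proposal for Claim \ref{conv lem 2}.}

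The plan is to exploit concavity of the single-variable function $g(t):=(\Vol V_{v,\bullet}^t)^{1/n}$ on $[0,\cS_0(v))$ (Proposition \ref{V.t}), together with the observation that this function is monotone non-increasing (an immediate consequence of the fact that the vector spaces $V_{v,k}^t=\cF_v^{kt}R_k$ are nested decreasingly in $t$ for each $k$, hence their dimensions, and thus the volume limit, are non-increasing). The hypothesis $\Vol V_{v,\bullet}^s<\VolL=\Vol V_{v,\bullet}^0$ says $g(s)<g(0)$. I want to conclude $g(t)<g(s)$ for every $t\in(s,\cS_0(v))$.

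First I would record that $g$ is non-increasing, so certainly $g(t)\le g(s)$ for $t>s$; the only thing to rule out is equality. Suppose for contradiction that $g(t_0)=g(s)$ for some $t_0\in(s,\cS_0(v))$. Since $g$ is non-increasing, it is then constant, equal to $g(s)$, on the whole interval $[s,t_0]$. Now pick any point $s'\in(0,s)$ — this is possible since $s<\cS_0(v)$ forces $s\ge0$, and if $s=0$ the hypothesis $g(s)<g(0)$ is already contradictory, so $s>0$. Concavity of $g$ on $[s',t_0]$ together with $g$ being constant on the sub-interval $[s,t_0]\subset[s',t_0]$ and $g(s')\ge g(s)$ (monotonicity) forces, by the standard fact that a concave function which attains its value at an interior configuration cannot strictly increase afterward: more precisely, concavity gives $g(s)\ge \lambda g(s') + (1-\lambda) g(t_0)$ for $s=\lambda s'+(1-\lambda)t_0$ with $\lambda\in(0,1)$, i.e. $g(s)\ge \lambda g(s')+(1-\lambda)g(s)$, hence $g(s)\ge g(s')$. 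Combined with $g(s')\ge g(s)$ we get $g(s')=g(s)$ for all $s'\in(0,s)$, and letting $s'\to0^+$ and using continuity of $g$ on $[0,\cS_0(v))$ (which follows from concavity) yields $g(0)=g(s)$, contradicting the hypothesis $g(s)<g(0)$. Therefore $g(t)<g(s)$, and raising to the $n$-th power gives $\Vol V_{v,\bullet}^t<\Vol V_{v,\bullet}^s$ as claimed.

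The only mild subtlety — and the step I would be most careful about — is the continuity of $g$ up to the left endpoint $0$ and the handling of the degenerate case $s=0$; both are dispatched quickly as above, the former because a concave function on an interval is continuous on its interior and the volume is manifestly continuous from the right at $0$ (indeed $g(0)=(\VolL)^{1/n}$ by definition). No deep input beyond Proposition \ref{V.t} and the elementary monotonicity of $t\mapsto\dim V_{v,k}^t$ is needed, so this is genuinely a short argument.
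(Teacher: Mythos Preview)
Your argument is correct and follows the same idea as the paper's proof: assume $g(t_0)=g(s)$ for some $t_0>s$ and derive $g(0)\le g(s)$ from concavity, contradicting the hypothesis. The paper does this in one line by writing $s$ directly as the convex combination $(1-s/t_0)\cdot 0+(s/t_0)\cdot t_0$ and applying Proposition~\ref{V.t} at the endpoint $0$ (which is permitted since concavity is stated on $[0,\cS_0(v))$), whereas you introduce an intermediate $s'\in(0,s)$ and then pass to the limit $s'\to0^+$; this detour and the accompanying discussion of continuity at $0$ are unnecessary.
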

\begin{proof}
    Assume there is $t\in(s,\cS_0(v))$ such that
    $
        \Vol V_{v,\bullet}^t=\Vol V_{v,\bullet}^s.
    $
    By Proposition \ref{V.t},
    $$
        \left(\Vol V_{v,\bullet}^s\right)^\frac{1}{n}\geq\left(1-\frac{s}{t}\right)\left(\Vol V_{v,\bullet}^0\right)^\frac{1}{n}+\frac{s}{t}\left(\Vol V_{v,\bullet}^t\right)^\frac{1}{n}=\left(1-\frac{s}{t}\right)\left(\VolL\right)^\frac{1}{n}+\frac{s}{t}\left(\Vol V_{v,\bullet}^s\right)^\frac{1}{n}.
    $$
So $
        \Vol V_{v,\bullet}^s\geq\VolL>\Vol V_{v,\bullet}^s,
    $
     a contradiction.
\end{proof}

\subsection{A 1-parameter family of Okounkov bodies}

A key object in this article is the 1-parameter family of filtered Okounkov bodies
associated to $v$ introduced by Boucksom--Chen \cite{LM09,BC11,BJ20}.

\begin{definition}\label{G def}
Recall \S\ref{DiscreteOkBSubsec}.    Fix an admissible flag on $X$.
    For $v\in\ValXfin$, denote by 
    $$\Delta_v^t, \q t\geq0,$$ 
    the Okounkov body associated to $V_{v,\bullet}^t$ \cite[Definition 1.16]{LM09}, where
(recall Definition \ref{GradedLinearSerierDef})
    $$
        V_{v,k}^t:=\cF_v^{kt}R_k, \q k\in\ZZ_+.
    $$
    In particular,
    $$
        \Delta:=\Delta_v^0
    $$
    is an Okounkov body of $L$,
    and $\Delta_v^t=\emptyset$ for $t>\cS_0(v)$ \eqref{def T}.
\end{definition}
\noindent
By Lazarsfeld--Musta\c t\u a (recall \eqref{VolGradedEq}) \cite[Theorem 2.13]{LM09},
    \begin{equation}\label{LM}
        \left|\Delta_v^t\right|=\frac{1}{n!}\Vol V_{v,\bullet}^t.
    \end{equation}

Denote by
\begin{equation}\label{p1Eq}
    p_1:\RR^n\ra\RR
\end{equation}
the projection $x=(x_1,\ldots,x_n)\mapsto x_1$.
On $\ValXdiv$, the Okounkov bodies $\Delta_v^t$ can be studied via
their slices, i.e., $p_1$-fibers.

\blem
\lb{sliceDivOkBodyLem}
Suppose $v\in\ValXdiv$.
There exists an admissible flag such that
\beq
\lb{Deltavt}
    \Delta_v^t
    =
    \Delta\cap p_1^{-1}\left(\left[t,\cS_0\left(v\right)\right]\right),
    \q \hbox{\ for $t\in[0,\cS_0(v)]$}.
\eeq
\elem

Note that $\Delta_v^t$ are convex bodies in $\RR^n$ 
for $t<\cS_0(v)$ and in the limit $t\ra\cS_0(v)^-$
{\it collapse} to a lower-dimensional convex set.
Note that $\Delta_v^t=\Delta_v^{\sigma(v)}$ for $t\in[0,\sigma(v)]$.

\bpf
We may suppose $v\in\ValXdiv$ is of the form 
$v=v_{F,\sigma}$ for some $F\subset Y$ and 
a resolution $\sigma:Y\to X$ (Definition \ref{DivValDef}). 
Choose an admissible flag contained in $F$:
\beq
\lb{flagFEq}
Y\supset F\supset\cdots\supset Y_n,
\eeq
and let $\Delta$ denote the Okounkov body of $\sigma^*L$. 

Assume $t$ is rational with denominator $q_t$. Note that 
$
\Delta\cap p_1^{-1}\left(\left[t,\cS_0\left(v\right)\right]\right)
=
\Delta(\sigma^*L-tF)+(t,0,\ldots,0)
$
\cite[Theorem 4.26]{LM09}.
The body $q_t\Delta(\sigma^*L-tF)$ is constructed from the graded ring whose
$k$-th graded piece is
$R_k'=H^0(q_tk(\sigma^*L-tF))=H^0(q_tk\sigma^*L-q_tktF)$.
It can be identified with $V^t_{v,q_tk}$ by dividing by 
a local defining section of $F$ to the power $q_tt$; thus,
by Definition \ref{G def} it equals 
$q_t\Delta_v^t$ translated by the vector $(-q_tt,0,\ldots,0)$.
Dividing by $q_t$ proves \eqref{Deltavt} for rational $t$. 
By continuity, it follows also for irrational $t$.
\epf

Note that for this choice of admissible flag, by closedness of the Okounkov body,
$
    p_1^{-1}\left(\cS_0\left(v\right)\right)\cap\Delta\neq\emptyset,
$
i.e., $\max_\Delta p_1=\cS_0(v)$. However, for $t$ in a neighborhood of $0$ it is possible that $p_1^{-1}(t)\cap\Delta=\emptyset$, i.e., $\min_\Delta p_1\ge 0$ with equality if (but not only if)
$L$ is ample (see below). Set $|\emptyset|=0$.

\blem
\lb{muvdivLem}
Suppose $v\in\ValXdiv$. 
There exists an admissible flag such that
$\mu_v$ is 
absolutely continuous with respect to 
Lebesgue measure on $\RR$. Moreover, its density is
$$d\mu_v=
   \frac{1}{\left|\Delta\right|}\left|\Delta\cap p_1^{-1}\left(s\right)\right|ds,
\q s\in[0,\cS_0(v)].
$$
In particular, $d\mu_v(s)=f(s)^{n-1}ds$ with $f$ concave on $p_1(\Delta)=[\sigma(v),\cS_0(v)]$
and zero on $[0,\sigma(v)]$.

\elem

\bpf
As in the proof of Lemma \ref{sliceDivOkBodyLem},
let 
$v=v_{F,\sigma}$, and let $\Delta$ denote the Okounkov body of $\sigma^*L$
with respect to the flag \eqref{flagFEq}. 
By Lemma \ref{sliceDivOkBodyLem},
$$
    \left|\Delta_v^t\right|=\left|\Delta\cap p_1^{-1}\left(\left[t,\cS_0\left(v\right)\right]\right)\right|=\int_t^{\cS_0\left(v\right)}\left|\Delta\cap p_1^{-1}\left(s\right)\right|ds.
$$
Differentiating,
$
    d\mu_v=\frac{n!}{\VolL}\left|\Delta\cap p_1^{-1}\left(s\right)\right|ds=\frac{1}{\left|\Delta\right|}\left|\Delta\cap p_1^{-1}\left(s\right)\right|ds.
$
In particular, on its support $\mu_v$ is absolutely continuous with respect to Lebesgue measure, hence also on $\RR$.
(cf. \cite[Corollary 4.27]{LM09}).
Finally, by Brunn's concavity,
 $s\mapsto|\Delta\cap p_1^{-1}(s)|^\frac{1}{n-1}$ is concave on 
 $p_1(\Delta)$.
\epf

Results of Ein et al. \cite{ELMNP09} and an observation of Fujita
 \cite[Proposition 2.1]{Fuj19}~(cf.~\cite[Proposition 3.11]{BJ20})~give:
\blem
\lb{ELMNPFujLem}
Suppose $v\in\ValXdiv$ and consider the flag guaranteed by Lemma \ref{muvdivLem}.
If, in addition,
$L$ is ample, $p_1(\Delta)=[0,\cS_0(v)]$, i.e., $\sigma(v)=0$.
\elem

\begin{corollary}\label{FujLem}
    Suppose $L$ is ample. For $v\in\ValXdiv$,
    $
        d\mu_v=f\left(s\right)^{n-1}ds
    $
    where $f$ is a concave function on $[0,\cS_0(v)]$.
\end{corollary}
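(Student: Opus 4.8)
The plan is simply to combine the two preceding lemmas, so the proof is very short. First I would invoke Lemma \ref{muvdivLem}: since $v\in\ValXdiv$, there is an admissible flag for which $\mu_v$ is absolutely continuous with respect to Lebesgue measure on $\RR$ and
$d\mu_v=f(s)^{n-1}\,ds$, where $f(s)=\big(|\Delta|^{-1}\,|\Delta\cap p_1^{-1}(s)|\big)^{1/(n-1)}$ is concave on $p_1(\Delta)=[\sigma(v),\cS_0(v)]$ and vanishes on $[0,\sigma(v)]$.

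Next I would bring in the extra hypothesis that $L$ is ample. By Lemma \ref{ELMNPFujLem}, applied to the very flag furnished by Lemma \ref{muvdivLem}, ampleness forces $\sigma(v)=0$; hence $p_1(\Delta)=[0,\cS_0(v)]$ and the subinterval $[0,\sigma(v)]$ on which $f$ was only known to vanish collapses to the single point $\{0\}$. Therefore $f$ is concave on the whole of $[0,\cS_0(v)]$ and $d\mu_v=f(s)^{n-1}\,ds$ there, which is exactly the assertion.

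There is no genuine obstacle here: all the content sits in Lemmas \ref{muvdivLem} and \ref{ELMNPFujLem}. The only point worth flagging is that the right endpoint $s=\cS_0(v)$ is legitimately included in the interval of concavity because the Okounkov body $\Delta$ is closed, so $p_1^{-1}(\cS_0(v))\cap\Delta\neq\emptyset$ (as recorded just before Lemma \ref{muvdivLem}); consequently no mass of $\mu_v$ is lost at the right endpoint and the density formula $d\mu_v=f(s)^{n-1}\,ds$ holds on the closed interval $[0,\cS_0(v)]$.
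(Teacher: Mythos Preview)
Your proposal is correct and matches the paper's approach exactly: the corollary is stated without a separate proof because it follows immediately from combining Lemma \ref{muvdivLem} with Lemma \ref{ELMNPFujLem}, precisely as you outline.
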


Boucksom--Chen's concave transform 
associated to $v$ is
\cite[Definition 1.8]{BC11},
    \beq\lb{GvDef}
        G_v\left(x\right):=\sup\left\{t\geq0\,:\,x\in\Delta_v^t\right\},
        \q x\in \Delta.
    \eeq
    In other words,
    $
        \left\{G_v\geq t\right\}=\Delta_v^t.
    $
The measure $\mu_v$ \eqref{muvEq} is related to $G_v$
\cite[Theorem 1.11]{BC11},
 \beq\label{muG}
 d\mu_v=\frac{n!}{\Vol L}\left(G_v\right)_*dx,
\eeq
 where $dx$ denote the Lebesgue measure on $\Delta$. 
Note that 
\beq\lb{Gp1Eq}
G=p_1, \q \hbox{for $v\in\ValXdiv$ with the conventions of Lemma \ref{sliceDivOkBodyLem}.} 
\eeq

\section{Estimates on discrete Okounkov bodies}\label{idealized sec}

\subsection{Classical Weierstrass gap sequences and 1-D discrete Okounkov bodies}
\lb{classicalWeierSubSec}

Proposition \ref{WeierstrassIntroProp} is contained in the following:

\bprop
\lb{WeierstrassProp}
Let $C$ be a smooth curve of genus $g$ and $p$ a 
point in $C$. Then:

\smallskip
\noindent
(i)
The Weierstrass gap sequence of $p$ is $1=N_1<\cdots<N_g\le 2g-1$, with
$$
N_i=\min\{k\in\NN\,:\,\#(\Delta\cap \ZZ/k)-\#(\Delta_k)=i\},
$$
where $\Delta_k$ is $k$-th discrete Okounkov body of 
$\cO_C(p)$ associated 
to the flag $C\supset\{p\}$.

\smallskip
\noindent
(ii)
The Weierstrass gap sequence is determined by the first $N_g$ discrete Okounkov bodies.
Conversely, the Weierstrass gap sequence determines all discrete Okounkov bodies by
$$
    k\Delta_k=\left\{0,\ldots,k\right\}\setminus\left\{k-N_1,\ldots,k-N_g\right\},  \q k\in\NN.
$$

\smallskip
\noindent
(iii)
Equivalently, $k$ is a Weierstrass gap if and only if
$\min\Delta_k>\min\Delta=0$, and
$$
\{N_1,\ldots,N_g\}=\{k\in\NN\,:\, \min\Delta_k>0\}.
$$


\smallskip
\noindent
(iv) $p$ is a non-Weierstrass point, i.e., 
$\{N_1,\ldots,N_g\}=\{1,\ldots,g\}$,  
if and only if $\Delta_1=\ldots=\Delta_g=\{1\}$.

\smallskip
\noindent
(v) 
$\#(\Delta\cap \ZZ^n/k)-\#(\Delta_k)=g$ for $k\ge 2g-1$.
Moreover, given
$\tau\in(0,1]$, then for $k$ sufficiently large,
$\#\big(\Delta^{Q_v(\tau)}\cap \ZZ^n/k\big)-\#\big((\Delta^{Q_v(\tau)})_k\big)=g$. 



\eprop

\bpf
To start, recall the classical definition. For a curve $C$ of genus $g\geq1$,  $N\in\NN$ is a {\it Weierstrass gap} of $p\in C$ if there is no meromorphic function on $C$ that is holomorphic away from $p$ and has a pole of order $N$ at $p$ 
\cite{Ver}. 
Practically, the gaps are determined as follows.
Consider the map $\cW: 
\NN\longmapsto\{0,1\}$,
$
i\;\stackrel\cW{\longmapsto}\; h^0(ip)-h^0\big((i-1)p\big). 
$
Using Riemann--Roch 
$2g-1$ times, i.e., $h^0(kp)-h^1(kp)=k+1-g$ for $k=1,\ldots,2g-1$,
together with $g=h^1\ge h^1(p)\ge h^1(2p)\ge\cdots\ge h^1((2g-1)p)=h^0(K_C-(2g-1)p)=0$
and $1=h^0\le h^0(p)\le h^0(2p)\le\cdots\le h^0((2g-1)p)=g$, 
implies that $\#\cW^{-1}(0)=g$. The set of Weierstrass gaps
is $\cW^{-1}(0)=\{N_1,\ldots,N_g\}\subset\{1,\ldots,2g-1\}$:
if $N\in \cW^{-1}(0)$, i.e., $h^0(Np)=h^0((N-1)p)$, the
injection $H^0((N-1)p)\hookrightarrow H^0(Np)$ gives 
$H^0((N-1)p)= H^0(Np)$, i.e., every section in $H^0(Np)$
vanishes at $p$, so $N$ is a Weierstrass gap, and vice versa. 
It follows that there are precisely $g$ gaps and $1=N_1<\cdots<N_g\leq 2g-1$.

This can be translated 
to the language of discrete Okounkov bodies as follows.
First observe that $N$ is a Weierstrass gap of $p$ precisely when there is no section $s\in H^0(C,kp)$ 
with $\ord_ps=k-N$, for a(ny) fixed $k\geq N$. Equivalently,
$
    k\Delta_k=\left\{0,\ldots,k\right\}\setminus\left\{k-N_1,\ldots,k-N_g\right\},  \q k\in\NN,
$
where $\Delta_k$ is as in the statement. This proves (ii).
Note that (iii) is a restatement of  (ii). 
The point $p$ is called a {\it Weierstrass point} if $\cW^{-1}(0)\not=\{1,\ldots,g\}$
and so (iv) follows from (ii),
and so does (v).

While (i) follows from (ii), we prove (i) directly by induction to illustrate how 
the $\Delta_k$ can
be computed in practice (see Figure \ref{VerFig}). Note that $\Delta_1=\{1\}$, so $\#\Delta\cap\ZZ-\#\Delta_1=1$.
Suppose (i) holds for $N_1,\ldots,N_{\ell-1}$ for some $\ell\le g$. 
Note that $\Delta=[0,1]$ so
$\#(\Delta\cap\ZZ/k)=k+1.$
Thus, $(N_{\ell-1}+1)(\Delta\cap\ZZ/(N_{\ell-1}+1))$ has one more point than 
$N_{\ell-1}(\Delta\cap\ZZ/N_{\ell-1})$.
There are two cases. First, that new point is not contained in $(N_{\ell-1}+1)(\Delta_{N_{\ell-1}+1})$,
meaning
$N_{\ell-1}+1$ is a Weierstrass
gap by (i), i.e., $N_\ell=N_{\ell-1}+1$; but it also means  
that $\#(\Delta\cap \ZZ/N_\ell)-\#(\Delta_{N_\ell})=\ell-1+1$ by the induction
hypothesis, thus proving (i) in this case.
Second, if  the new point is contained in $(N_{\ell-1}+1)(\Delta_{N_{\ell-1}+1})$
then by (ii), $N_\ell>N_{\ell-1}+1$. Since there are precisely $g$ gaps, we repeatedly
increase the size of the discrete Okounkov body by 1, and are guaranteed 
that for some minimal $N_\ell>N_{\ell-1}$ the new point added in the last step will not be  contained in 
$N_{\ell}\Delta_{N_{\ell}}$ and by minimality and the induction hypothesis once again 
$\#(\Delta\cap \ZZ/N_\ell)-\#(\Delta_{N_\ell})=\ell-1+1$, proving (i).
\epf

Proposition \ref{Weierstrass2IntroProp} is a special case of the following:

\bprop
\lb{Weierstrass2Prop}
Let $C$ be a smooth curve of genus $g\geq2$ and $p$ 
point in $C$. Let $d_1=g$ and $d_k=k(2g-2)+1-g$ for $k\ge2$. Then:

\smallskip
\noindent
(i) The $k$-Weierstrass gap sequence of $p$ is $1=N^{(k)}_1<\cdots<N^{(k)}_{d_k}\le k(2g-2)+1$, with
$
\{N^{(k)}_1,\ldots,
N^{(k)}_{d_k}\}=k\Delta_k+1,
$
where $\Delta_k$ is $k$-th discrete Okounkov body of 
$K_C$ associated 
to the flag $C\supset\{p\}$.



\smallskip
\noindent
(ii)
For all $k\ge2$, $\#(\Delta\cap \ZZ^n/k)-\#(\Delta_k)=g$, while $\#(\Delta\cap \ZZ^n)-\#(\Delta_1)=g-1$.
The point $p$ is a $k$-Weierstrass point if and only if
$
k\Delta_k\not=\{0,1,\ldots,d_k-1\}.
$

\smallskip
\noindent
(iii) Equivalently, $\max\Delta-\max\Delta_k\le \frac{g}k$ ($\max\Delta-\max\Delta_1\le g-1$)
with equality if and only if $p$ is not a $k$-Weierstrass point for $k\ge2$ (respectively, $k=1$).




\eprop

\bpf
Recall that 
$N$ is a $k$-Weierstrass gap when there is
$s\in H^0(C,kK_C)$ with $\ord_ps=N-1$,
and that $p$ is a $k$-Weierstrass point when there
is $s\in H^0(C,kK_C)$ with $\ord_ps\ge d_k=h^0(C,kK_C)$.
For $k=1$, 
$N$ is a Weierstrass gap when 
$h^0(Np)=h^0((N-1)p)$, equivalently
when $h^1((N-1)p)-h^1(Np)=1$, or by
Kodaira--Serre duality 
$h^0(K_C-(N-1)p)-h^0(K_C-Np)=1$,
i.e., there exists a section $s\in H^0(K_C)$
with $\ord_p(s)=N-1$, i.e., $N-1\in \Delta_1$.
The proof for $k=1$ now follows from that of 
Proposition \ref{WeierstrassProp}.
For $k\ge2$, the proof follows
similarly by noting that
$d_k=k(2g-2)+1-g$ as $h^1(kK_C)=0$. 
\epf

\subsection{Idealized jumping function}

In this subsection we introduce the idealized jumping function and compare it with the usual one.

\begin{definition}\label{iklDef}
    Recall Definition \ref{G def}. Let
    \begin{equation}
        D_k:=\#\left(\Delta\cap\ZZ^n/k\right).
    \end{equation}
    The {\it idealized jumping function of $R$}, $i=i_{\bullet,\bullet}:
    \ValX\ra\cup_{k\in\ZZ_+}\ZZ_+^{D_k}$,
    is \newline 
    $i(v)=\{\{i_{k,m}(v)\}_{m\in\{1,\ldots,D_k\}}\}
    _{k\in\ZZ_+},\;$ 
    where
    \begin{equation}\label{bk def}
        i_{k,\ell}\left(v\right):=k\max\left\{t\geq0\,:\,\#\left(\Delta_v^t\cap\ZZ^n/k\right)\geq\ell\right\}.
    \end{equation}
\end{definition}

Similarly to \eqref{jumpingEq},
\beq\lb{ijumpingEq}
    i_{k,1}\geq\cdots\geq i_{k,D_k}.
\eeq

\blem
\label{ikljkl1stLem}
    For $k\in\NN(L)$ and $\ell\in\{1,\ldots,d_k\},\;$
    $
       j_{k,\ell}\left(v\right)\leq i_{k,\ell}\left(v\right).
    $
\elem
\bpf
Recall the equality \cite[Lemma 1.4]{LM09},
\begin{equation}\label{numDiscOkB}
    \#\left((\Delta^t_v)_k\right)=V_{v,k}^t,
\end{equation}
and that the latter equals $\cF_v^{kt}R_k$ by 
Definition  \ref{G def}.
By \eqref{numDiscOkB} and Definition \ref{iklDef}, for any $a>i_{k,\ell}(v)$,
    $$
        \dim\cF_v^aR_k=\#\left(\Delta_{v,k}^{{a}/{k}}\right)\leq\#\left(\Delta_v^{{a}/{k}}\cap\ZZ^n/k\right)<\ell.
    $$
    By \eqref{jumping number} (note the maximum is attained),
    $
       j_{k,\ell}\left(v\right)<a.
    $
    Hence,
    $
       j_{k,\ell}\left(v\right)\leq i_{k,\ell}\left(v\right).
    $
\epf

A reverse inequality holds for divisorial valuations:

\blem
\label{ikljkl2ndLem}
Let $v\in\ValXdiv$. For $k\in\NN(L)$ and $\ell\in\{1,\ldots,d_k\},\;$
$
        j_{k,d_k+1-\ell}\left(v\right)\geq i_{k,D_k+1-\ell}\left(v\right).
$
\elem
\bpf
Follow the notation in the proof of Lemma \ref{sliceDivOkBodyLem}
for the Okounkov body associated to the flag \eqref{flagFEq}.
    Recall Definition \ref{iklDef}. By slight abuse we will use interchangeably the same
    notation for sequences and finite sets in $\RR$. The non-decreasing sequence $\{i_{k,D_k+1-\ell}(v)\}_{\ell=1}^{D_k}$ is equal to $p_1(k\Delta\cap\ZZ^n)$, i.e., the projection of all lattice points in $k\Delta$ onto their first coordinate. Since the sequence of jumping numbers $\{j_{k,d_k+1-\ell}(v)\}_{\ell=1}^{d_k}$ consists of the projection of (part of) the lattice points, namely of $k\Delta_k$ \eqref{DeltakEq}, it 
    is a subsequence of $\{i_{k,D_k+1-\ell}(v)\}_{\ell=1}^{D_k}$. In particular 
    (by the monotone arrangement \eqref{jumpingEq},\eqref{ijumpingEq}),
    $
        j_{k,d_k+1-\ell}\left(v\right)\geq i_{k,D_k+1-\ell}\left(v\right),
$ for 
$\ell\in\{1,\ldots,d_k\}.
    $
\epf

\subsection{Idealized expected vanishing}

So far we have associated various quantities to the discrete
Okounkov bodies $\Delta_k$. 
In the spirit of \S\ref{DiscreteOkBSubsec} and  Problem \ref{DkdkProb} 
 we associate analogues 
quantities to the idealized discrete Okounkov bodies $\Delta\cap \ZZ^n/k$:
i.e., quantities that roughly emulate the body being ``toric."

\begin{definition}\label{upper S}
    The {\it idealized expected vanishing order} is the function
    $\overline{S}_{k,m}:\ValX\ra[0,\infty)$,
    $$
        \overline{S}_{k,m}
        :=\frac{1}{km}\sum_{\ell=1}^{m}i_{k,\ell}.
    $$
\end{definition}

When $v\in\ValXfin$, $i_{k,\ell}(v)/k\in[0,\cS_0(v)]$, so also $\overline{S}_{k,m}(v)$
lies in that range.

\begin{example}
    Let $C$ be a smooth projective curve of genus $g$, and fix a point $P\in C$, yielding the flag $C\supset\{P\}$. Let $L$ be a line bundle over $C$ with $\deg L\geq 2g+1$, and $v=\ord_P$ \cite[Example 1.3]{LM09}. For a very general choice of $P$,
    $d_k=k\deg L+1-g>0$, so $\NN(L)=\NN$, and 
    $
        j_{k,\ell}\left(v\right)=d_k-\ell,
    $
    i.e.,
    $
        \left\{j_{k,\ell}\left(v\right)\right\}_{\ell=1}^{d_k}=\left\{k\deg L-g,k\deg L-g-1,\ldots,0\right\},
    $
    and 
    $
        S_{k,m}\left(v\right)=\deg L-({m-1})/{2k}-{g}/{k},
    $
    It follows that $\Delta=[0,\deg L]$ and $\Delta_v^t=[t,\deg L]$ for $0\leq t\leq \cS_0(v)=\deg L$ \cite[Example 1.14]{LM09}. By Definition \ref{upper S}, $D_k=k\deg L+1$, and 
    $
        i_{k,\ell}\left(v\right)=D_k-\ell,
    $
    i.e.,
    $
        \left\{i_{k,\ell}\left(v\right)\right\}_{\ell=1}^{D_k}=\left\{k\deg L,k\deg L-1,\ldots,0\right\},
    $
    and
    $
        \overline{S}_{k,m}\left(v\right)=\deg L-({m-1})/{2k},
    $
\end{example}

By Lemma  \ref{ikljkl1stLem},
\beq\lb{S leq S upper}
S_{k,m}\leq\overline{S}_{k,m}.
\eeq
A simple and key observation is that a weak reverse estimate holds on $\ValXdiv$.

\blem
\lb{weakSkmIneq}
On $\ValXdiv$,   
\begin{equation}\label{first est}
S_{k,m_k}\geq\frac{1}{km_k}\sum_{\ell=1+D_k-d_k}^{m_k+D_k-d_k}i_{k,\ell}.
\end{equation}
\elem
\bpf
By Definition \ref{SkmDef}, \eqref{jumpingEq}, and Lemma \ref{ikljkl2ndLem},
$$
\baligned
S_{k,m_k}
&=
\frac{1}{km_k}\sum_{\ell=1}^{m_k}j_{k,\ell}
=
\frac{1}{km_k}\sum_{\ell=d_k+1-m_k}^{d_k}j_{k,d_k+1-\ell}
\cr
&\ge
\frac{1}{km_k}\sum_{\ell=d_k+1-m_k}^{d_k}i_{k,D_k+1-\ell}
=
\frac{1}{km_k}\sum_{\ell=1+D_k-d_k}^{m_k+D_k-d_k}i_{k,\ell}
.
\ealigned
$$
\epf
Lemma \ref{weakSkmIneq} will be improved to a strong reverse inequality for \eqref{S leq S upper}
in \S\ref{SkmnoncollapsingSubSec}, under the non-collapsing assumption $\tau>0$.
In the collapsing case, a microlocal estimate will be combined with Lemma \ref{weakSkmIneq} instead.

\blem
\lb{barSmkdecreasingRem}
For fixed $k\in\NN(L)$, $\overline{S}_{k,m}$ is non-increasing in $m$.
\elem
\bpf
Recall \eqref{GvDef}.
Write $\Delta\cap\ZZ^n/k=\{x_1,\ldots,x_{D_k}\}$ with $G_v(x_1)\geq\cdots\geq G_v(x_{D_k})$. Then $i_{k,\ell}\left(v\right)=kG_v(x_\ell)$ and 
$\overline{S}_{k,m}=\frac{1}{m}\sum_{\ell=1}^{m}G_v(x_\ell)$. The conclusion follows from
monotonicity~of~$G_v(x_\ell)$. 
\epf

\subsection{Partial discrete Okounkov bodies}

The doubly-indexed $(k,m_k)$ setting of this article with $m_k\le d_k$ means
that at each level $k$ we are considering the 
subset of {\it partial discrete Okounkov bodies} of cardinality $m_k$ (there are
$\binom{d_k}{m_k}$ such).
By Lemma \ref{compatible basis lem}, there will be a distinguished subset of this 
``discrete
Grassmannian"
that compute $S_{k,m}$. These are the so-called compatible $m$-bases. It 
turns out that they are closely related to the 1-parameter family of Okounkov
bodies $\Delta_v^{Q_v(\tau)}$ indexed by the volume quantile $Q_v(\tau)$
and $\tau\in[0,1]$. The following result connects many of the notions
introduced in this article. It is a generalization of Boucksom--Chen's Theorem 
\ref{mu}. Since it is not needed for our main
applications in this article, we postpone its proof to a sequel.

\bthm\lb{empiricalQThm}
For $k\in\NN(L)$ and $m_k\in\{1,\ldots,d_k\}$ with $\lim_{k\to\infty}{m_k}/{d_k}=\tau\in\left[\mu_v(\{\cS_0(v)\}),1\right]$, choose a linearly independent family $\{s_1^{(k)},\ldots,s_{m_k}^{(k)}\}$ compatible with $\cF_v$ (Recall Definition \ref{compat def}). The sequence of empirical measures on $\RR^n$,
$
    \frac{1}{m_k}\sum_{i=1}^{m_k}\delta_{\frac{1}{k}\nu\left(s_i^{\left(k\right)}\right)},
$
converges weakly to 
$
\boldsymbol{1}_{\Delta_v^{Q_v(\tau)}}dx/{\big| \Delta_v^{Q_v(\tau)} \big|}
$.
\ethm

This motivates the following definition.

\bdefn
The {\it $\tau$-partial idealized discrete Okounkov body} is $\Delta_v^{Q_v(\tau)}\cap\ZZ^n/k$.
\edefn

Problem \ref{DkdkQuantileProb} amounts to estimating $|M_k-m_k|$ where
    \begin{equation}\lb{MkEq}
        M_k:=\#\left(\Delta_v^{Q_v(\tau)}\cap\ZZ^n/k\right).
    \end{equation}
There is an initial estimate in terms of the idealized expected vanishing.

\blem
\lb{mkMkLemma}
    $
    \min\left\{1,
        {M_k}/{m_k}\right\}\overline{S}_{k,M_k}\left(v\right)
    \le
        \overline{S}_{k,m_k}\left(v\right)\leq
        \max\left\{1,
        {M_k}/{m_k}\right\}\overline{S}_{k,M_k}\left(v\right).
    $
\elem

\bpf
If $m_k\leq M_k$, by Definition \ref{upper S},
    $$
        \overline{S}_{k,m_k}\left(v\right)\leq\frac{1}{km_k}\sum_{\ell=1}^{M_k}i_{k,\ell}\left(v\right)=\frac{M_k}{m_k}\overline{S}_{k,M_k}\left(v\right).
    $$
    If $m_k\geq M_k$, then $\overline{S}_{k,m_k}(v)\leq\overline{S}_{k,M_k}(v)$ by Lemma  \ref{barSmkdecreasingRem}, giving
    the upper bound. 
    The~lower~bound~is~similar.
\epf

\begin{figure}[ht]
    \centering
    \begin{tikzpicture}
        \draw[->](-.5,0)--(11.5,0)node[below]{$\RR^n$};
        \draw[->](.5,-1)--(.5,5)node[left]{$t$};
        \draw[domain=2:6]plot(\x,{(\x+2)^2*(10-\x)/32-4});
        \draw(6,4)--(8,4)node[above right]{$G_v(x)$};
        \draw[domain=8:10]plot(\x,{(\x-4)^2*(10-\x)/8});
        \draw[dashed](.5,4)node[above left]{\scriptsize$i_{k,1}/k=i_{k,2}/k$}--(6,4);
        \draw[<-](.5,4)--(.4,4.2);
        \draw[dashed](6.5,0)--(6.5,4);
        \draw[dashed](7.5,0)--(7.5,4);
        \draw[dashed](5.5,0)--(5.5,{(5.5+2)^2*(10-5.5)/32-4})--(.5,{(5.5+2)^2*(10-5.5)/32-4})node[left]{\scriptsize$i_{k,3}/k$};
        \draw[dashed](8.5,0)--(8.5,{(8.5-4)^2*(10-8.5)/8})--(.5,{(8.5-4)^2*(10-8.5)/8})node[below left]{\scriptsize$i_{k,4}/k$};
        \draw[<-](.5,{(8.5-4)^2*(10-8.5)/8})--(.4,{(8.5-4)^2*(10-8.5)/8-.2});
        \draw[dashed](4.5,0)--(4.5,{(4.5+2)^2*(10-4.5)/32-4})--(.5,{(4.5+2)^2*(10-4.5)/32-4})node[left]{\scriptsize$i_{k,5}/k$};
        \draw[dashed](3.5,0)--(3.5,{(3.5+2)^2*(10-3.5)/32-4})--(.5,{(3.5+2)^2*(10-3.5)/32-4})node[left]{\scriptsize$i_{k,6}/k$};
        \draw[dashed](9.5,0)--(9.5,{(9.5-4)^2*(10-9.5)/8})--(.5,{(9.5-4)^2*(10-9.5)/8})node[left]{\scriptsize$i_{k,7}/k$};
        \draw[dashed](2.5,0)--(2.5,{(2.5+2)^2*(10-2.5)/32-4})--(.5,{(2.5+2)^2*(10-2.5)/32-4})node[left]{\scriptsize$i_{k,8}/k$};
        \draw[line width=1pt](5.5,0)--(5.5,3)node[right]{\scriptsize$j_{k,1}/k$};
        \draw[line width=1pt](6.5,0)--(6.5,3)node[right]{\scriptsize$j_{k,2}/k$};
        \draw[line width=1pt](7.5,0)--(7.5,1)node[right]{\scriptsize$j_{k,3}/k$};
        \filldraw(3.5,0)circle(1pt)node[above right]{\scriptsize$j_{k,4}/k$};
        \draw[->](-.5,-1.62)--(11.5,-1.62)node[below]{$\RR^n$};
        \draw[line width=2pt](2,-1.62)--(10,-1.62)node[below]{$\Delta$};
        \filldraw[fill=white](2.5,-1.62)circle(2.5pt);
        \filldraw(3.5,-1.62)circle(2.5pt);
        \filldraw[fill=white](4.5,-1.62)circle(2.5pt);
        \filldraw(5.5,-1.62)circle(2.5pt);
        \filldraw(6.5,-1.62)circle(2.5pt);
        \filldraw(7.5,-1.62)node[above right]{$\Delta_k$}circle(2.5pt);
        \filldraw[fill=white](8.5,-1.62)circle(2.5pt);
        \filldraw[fill=white](9.5,-1.62)node[above right]{$\Delta\cap\ZZ^n/k$}circle(2.5pt);
    \end{tikzpicture}
    \caption{The graph of $G_v:\RR^n\ra\RR$ and the jumping numbers $j_{k,\ell}(v)$ and idealized jumping numbers $i_{k,\ell}(v)$ in an example with $d_k=4,D_k=8$. 
    Here, $j_{k,4}=0$ and
    $\cF_v^{j_{k,4}}R_k=H^0(X,kL)\cong \CC^4$,
    $\cF_v^{j_{k,3}}R_k\cong \CC^3$,
    and $j_{k,2}=j_{k,1}$ so $\cF_v^{j_{k,1}}R_k\cong \CC^2$.
    Each super-level set $G_v^{-1}([t,\infty))\subset\RR^n$ is the Okounkov body $\Delta_v^t$. The Okounkov body $\Delta\subset\RR^n$ and $\Delta_k\subset\Delta\cap\ZZ^n/k\subset\Delta$ are drawn below for comparison
    (the former is represented by the solid dots).}
\end{figure}
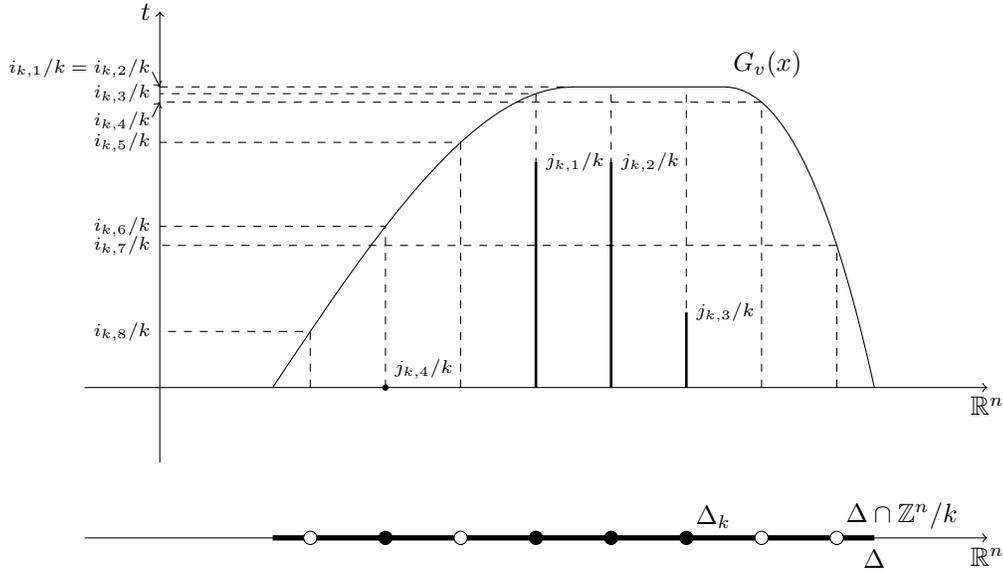

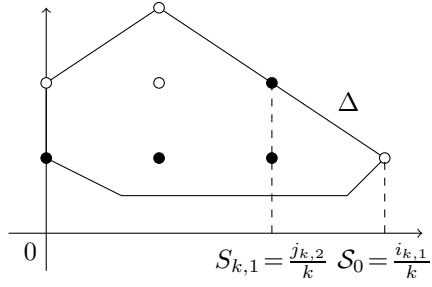
\begin{figure}[ht]
    \centering
    \begin{tikzpicture}
        \draw[->](-.5,0)--(5,0);
        \draw[->](0,-.5)--(0,3);
        \draw(0,0)node[below left]{$0$};
        \draw(0,2)--(1.5,3)--(3,2)--(4.5,1)node[midway,above right]{$\Delta$}--(4,.5)--(1,0.5)--(0,1)--cycle;
        \filldraw(0,1)circle(2pt);
        \filldraw[fill=white](0,2)circle(2pt);
        \filldraw(1.5,1)circle(2pt);
        \filldraw[fill=white](1.5,2)circle(2pt);
        \filldraw[fill=white](1.5,3)circle(2pt);
        \filldraw(3,1)circle(2pt);
        \filldraw(3,2)circle(2pt);
        \filldraw[fill=white](4.5,1)circle(2pt);
        \draw[dashed](3,0)node[below]{$S_{k,1}\!=\!\frac{j_{k,2}}k$}--(3,2);
        \draw[dashed](4.5,0)node[below]{$\cS_0\! =\! \frac{i_{k,1}}k$}--(4.5,1);
    \end{tikzpicture}
    \caption{An example of a discrete Okounkov body associated to a
    divisorial valuation with $d_k=4$ and $j_{k,1}=j_{k,2}>j_{k,3}>j_{k,4}=0$, and $D_k=8$ and $i_{k,1}>i_{k,2}=i_{k,3}>i_{k,4}=i_{k,5}=i_{k,6}>i_{k,7}=i_{k,8}=0$.
    In this example, $j_{k,2}=i_{k,2}$, $j_{k,3}=i_{k,4}$, $j_{k,4}=i_{k,7}$.}
\lb{FigDkdkij}
\end{figure}

\section{Joint asymptotics of
Grassmannian thresholds: lower bound}\label{lb section}

In this section we construct the invariants $\boldsymbol\delta_\tau$ (Corollary \ref{deltadoubleasymCor}) generalizing $\alpha$ and $\delta$. 
A valuative characterization of $\delta_{k,m}$ is stated in \S \ref{valuative}. 
The joint convergence in $(k,m)$ (but not asymptotics; those are derived later) of the end-point cases $\tau=0,1$ are the subject of \S\ref{End-point} involving the degenerate ($m_k/d_k\ra0$) and full-measure case ($m_k/d_k\ra1$). In \S \ref{general double} we construct the invariants $\cS_\tau$ generalizing $\cS_0$ and $\cS_1$. In \S \ref{lattice point estimates} asymptotic estimates on lattice points in a convex body are derived. These are applied in \S \ref{the concave transform}, culminating in Corollary \ref{deltadoubleasymCor}
on well-definedness of the invariants $\boldsymbol\delta_\tau$.

\subsection{Valuative characterization of Grassmannian stability thresholds}\label{valuative}

First, a straightforward generalization of the valuative
characterization of $\alpha_k$ ($m=1$) and $\delta_k$ ($m=d_k$)
 \cite[Propositions 4.1 and 4.3]{BJ20}.

\begin{lemma}\lb{deltakminfProp}
    For $k\in\NN(L)$ and $m\in\{1,\ldots,d_k\}$, 
    $
    \disp
        \delta_{k,m}=\inf_{v\in\ValXdiv}{A}/{S_{k,m}}=\inf_{v\in\ValXfin}{A}/{S_{k,m}}.
    $
\end{lemma}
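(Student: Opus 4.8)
The plan is to unwind definition \eqref{delta def} valuatively: express the $\lct$ of each basis divisor through divisorial valuations via Theorem \ref{lct def}, interchange the two resulting infima, and then recognize the inner infimum as $A/S_{k,m}$ using the compatible-family characterization of $S_{k,m}$ (Lemma \ref{compatible basis lem}).

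Concretely, for a linearly independent family $\{s_\ell\}_{\ell=1}^m\subset R_k$ I would set $t:=s_1\otimes\cdots\otimes s_m\in R_{km}$, so that $\sum_{\ell=1}^m(s_\ell)=(t)$ and $v(t)=\sum_{\ell=1}^m v(s_\ell)$ for every $v\in\ValX$ by the homomorphism property of valuations. Applying Theorem \ref{lct def} to $t\in R_{km}$ (here $X$ klt is used) yields $km\,\lct\big(\sum_\ell(s_\ell)\big)=\inf_{v\in\ValXdiv}km\,A(v)/\sum_\ell v(s_\ell)$. Taking the infimum over all such families and swapping the two infima (legitimate, since both are plain infima), one gets $\delta_{k,m}=\inf_{v\in\ValXdiv}\inf_{\{s_\ell\}}km\,A(v)/\sum_\ell v(s_\ell)$. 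For fixed $v\in\ValXdiv$, since $A(v)>0$ and each $v(s_\ell)\ge0$, the inner infimum equals $km\,A(v)$ divided by $\sup_{\{s_\ell\}}v\big(\sum_\ell(s_\ell)\big)$, which by Lemma \ref{compatible basis lem} is attained and equals $km\,S_{k,m}(v)$; hence the inner infimum is $A(v)/S_{k,m}(v)$ when $S_{k,m}(v)>0$, and $+\infty=A(v)/S_{k,m}(v)$ when $S_{k,m}(v)=0$ (in which case $v$ annihilates every section of $R_k$). This establishes $\delta_{k,m}=\inf_{\ValXdiv}A/S_{k,m}$.

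For the second equality, $\ValXdiv\subset\ValXfin$ gives $\inf_{\ValXfin}A/S_{k,m}\le\delta_{k,m}$ at once. For the reverse, given $v\in\ValXfin$ with $S_{k,m}(v)>0$ (the case $S_{k,m}(v)=0$ being trivial, since then $A(v)/S_{k,m}(v)=\infty$), I would pick a family $\{s_\ell\}_{\ell=1}^m$ compatible with $\cF_v$ (Lemma \ref{compatible basis lem}, Definition \ref{compat def}), form $t=s_1\otimes\cdots\otimes s_m$ with $v(t)=km\,S_{k,m}(v)>0$, and use $\lct(t)=\inf_{w\in\ValX}A(w)/w(t)$ together with $\ValXfin\subset\ValX$ to get
$$\frac{A(v)}{S_{k,m}(v)}=\frac{km\,A(v)}{v(t)}\ge km\,\lct(t)\ge\delta_{k,m},$$
the last step by \eqref{delta def}. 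Thus the three quantities agree.

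I expect the only subtle points to be the interchange of infima (harmless here, as neither is a supremum) and the degenerate case $S_{k,m}(v)=0$. Finiteness of $\delta_{k,m}$, which makes those edge cases harmless, follows since a component $F$ of the divisor of any nonzero section of $R_k$ satisfies $j_{k,1}(\ord_F)>0$, hence $S_{k,m}(\ord_F)>0$ and $A(\ord_F)/S_{k,m}(\ord_F)<\infty$.
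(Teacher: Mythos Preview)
Your proof is correct and follows essentially the same approach as the paper: unwind the definition of $\delta_{k,m}$, apply Theorem \ref{lct def} to each basis divisor, swap the two infima, and invoke Lemma \ref{compatible basis lem} to identify the inner optimization with $km\,S_{k,m}(v)$. You add a bit more care on the degenerate case $S_{k,m}(v)=0$ and give a direct argument for the $\ValXfin$ equality (rather than just saying ``the same argument applies''), but the substance is identical.
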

\begin{proof}
    By \eqref{delta def} and Theorem \ref{lct def},
    \begin{align*}
        \delta_{k,m}&=\inf_{\substack{\left\{s_\ell\right\}_{\ell=1}^m\subset R_k\\\mbox{\smlsev linearly independent}}}km\lct\left(\sum_{\ell=1}^m\left(s_\ell\right)\right)
        =\inf_{\substack{\left\{s_\ell\right\}_{\ell=1}^m\subset R_k\\\mbox{\smlsev linearly independent}}}\inf_{v\in\ValXdiv}km\frac{A\left(v\right)}{v\left(\sum\limits_{\ell=1}^m\left(s_\ell\right)\right)}\\
        &=\inf_{v\in\ValXdiv}km\frac{A\left(v\right)}{\sup\limits_{\substack{\left\{s_\ell\right\}_{\ell=1}^m\subset R_k\\\mbox{\smlsev linearly independent}}}v\left(\sum\limits_{\ell=1}^m\left(s_\ell\right)\right)}
        =\inf_{v\in\ValXdiv}\frac{A\left(v\right)}{S_{k,m}\left(v\right)}.
    \end{align*}
    The same argument applies to $v\in\ValXfin$.
\end{proof}

\begin{lemma}\label{birlem}
    For $k\in\NN(L)$, $\alpha_k=\delta_{k,1}$ is computed by a divisorial valuation, i.e.,
    $$
        \delta_{k,1}=
        \frac{A(v)}{S_{k,1}(v)} \q \hbox{for some $v\in\ValXdiv$}.
    $$
\end{lemma}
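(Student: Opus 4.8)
The plan is to reduce everything to the $m=1$ instance of Lemma~\ref{deltakminfProp} together with the fact that the log canonical threshold of a \emph{single} divisor is computed by a divisorial valuation (Theorem~\ref{lct def}); the only genuinely new input needed is that the infimum over the complete linear system $|kL|$ of these thresholds is \emph{attained}.

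First I would observe that, by \eqref{delta def} with $m=1$ and the fact that $\lct(\lambda s)=\lct(s)$ (Definition~\ref{lctDef}), the number $\delta_{k,1}/k$ is the infimum of the function $[s]\mapsto\lct\big((s)\big)=\min_{x\in X}c_x(s)$ over the projective space $\PP R_k\cong|kL|$. Since this function is lower semicontinuous on the compact space $\PP R_k$ --- this is the Demailly--Koll\'ar semicontinuity theorem for complex singularity exponents \cite{DK01}, which gives joint lower semicontinuity of $(x,[s])\mapsto c_x(s)$, combined with compactness of $X$ to minimize over $x$ --- the infimum is attained at some $[s_0]$, i.e.\ $\delta_{k,1}=k\,\lct\big((s_0)\big)$. (Alternatively one can invoke constructibility of log canonical thresholds in families, equivalently Zariski lower semicontinuity and Noetherian induction, which shows $\lct$ takes only finitely many values on $|kL|$ and hence attains its minimum.)

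Next, by Theorem~\ref{lct def} there is $v_0\in\ValXdiv$ with $\lct\big((s_0)\big)=A(v_0)/v_0(s_0)$. Since $s_0\in\cF_{v_0}^{v_0(s_0)}R_k\setminus\{0\}$, formula \eqref{jumping number} gives $kS_{k,1}(v_0)=j_{k,1}(v_0)\ge v_0(s_0)$, whence
\[
\frac{A(v_0)}{S_{k,1}(v_0)}\ \le\ k\,\frac{A(v_0)}{v_0(s_0)}\ =\ k\,\lct\big((s_0)\big)\ =\ \delta_{k,1}.
\]
On the other hand $A(v_0)/S_{k,1}(v_0)\ge\inf_{v\in\ValXdiv}A(v)/S_{k,1}(v)=\delta_{k,1}$ by Lemma~\ref{deltakminfProp}, so equality holds throughout; in particular $v_0$ computes $\delta_{k,1}=\alpha_k$, and as a byproduct $v_0(s_0)=j_{k,1}(v_0)$, i.e.\ $s_0$ has maximal vanishing order along $v_0$.

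The main obstacle --- and the only nontrivial point --- is the attainment of the infimum over $|kL|$; once that is in hand the rest is bookkeeping with the definitions and lemmas already established. If one wishes to keep the argument purely algebraic, the attainment is supplied by stratifying $|kL|$ into locally closed loci on which the log canonical threshold of the universal divisor is constant, using Zariski lower semicontinuity of log canonical thresholds in families and the Noetherianity of $|kL|$.
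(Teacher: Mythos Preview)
Your proof is correct and follows essentially the same route as the paper: both invoke Demailly--Koll\'ar lower semicontinuity to attain the infimum at some $s_0$, then Theorem~\ref{lct def} to find a divisorial $v_0$ computing $\lct(s_0)$, then the bound $v_0(s_0)\le kS_{k,1}(v_0)$ (you via \eqref{jumping number}, the paper via Lemma~\ref{compatible basis lem}, which for $m=1$ amounts to the same thing), and finally Lemma~\ref{deltakminfProp} for the reverse inequality.
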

\begin{proof}
    By \eqref{delta def},
    $
        \delta_{k,1}=\inf_{s\in R_k}k\lct\left(s\right).
    $
    By the lower semi-continuity of complex singularity exponents \cite[Theorem 0.2(3)]{DK01}, we can find $s\in R_k$ such that
        $\delta_{k,1}=k\lct\left(s\right).$
    By Theorem \ref{lct def}, there is $v\in\ValXdiv$ such that
        $\lct\left(s\right)={A\left(v\right)}/{v\left(s\right)}.$
    By Lemma \ref{compatible basis lem},
        $v\left(s\right)\leq kS_{k,1}\left(v\right).$
    Combining these four equations,
    $
        \frac{A\left(v\right)}{S_{k,1}\left(v\right)}\leq\delta_{k,1}.
    $
By Lemma \ref{deltakminfProp}
    equality is achieved.
\end{proof}

\subsection{End-point convergence cases}\label{End-point}

In this subsection we prove Theorem \ref{sub main thm}.

\begin{definition}\label{old notion}
    The {\it global log canonical threshold ($\alpha$-invariant)}
    $
        \alpha=\hbox{\rm glct}
        :=\inf_{\NN(L)}\hbox{\rm glct}_k,
    $
    where
    $
        \alpha_k=\hbox{\rm glct}_k
        :=k\inf_{s\in R_k}\lct\left(s\right).
    $
    The {\it basis log canonical threshold ($\delta$-invariant, {\rm or}  stability threshold)}
    $
        \delta:=\lim_{k\to\infty}\delta_k
    $
    (the limit exists \cite[Theorem A]{BJ20}), where
    $
        \delta_k=kd_k\inf_{\substack{\left\{s_\ell\right\}_{\ell=1}^{d_k}\\\mbox{\smlsev basis of }R_k}}\lct\left(\sum_{\ell=1}^{d_k}\left(s\right)\right).
    $
\end{definition}

The next result generalizes a theorem of Demailly \cite[Theorem A.3]{CS08}~and~Shi~\cite[Theorem 2.2]{Shi10}.
\begin{theorem}\label{sub main thm}
    Suppose $m_k\in\{1,\ldots,d_k\}$ for $k\in\NN(L)$.
If $m_k=o(k^n)$, then
    $
        \lim_{k\to\infty}\delta_{k,m_k}=\inf_{k}\delta_{k,m_k}=\alpha.
    $
    \newline
    If 
    $m_k=d_k-o(k^n)$,
    then $
        \lim_{k\to\infty}\delta_{k,m_k}=\delta.
    $
\end{theorem}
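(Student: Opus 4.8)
The plan is to reduce both assertions to the valuative formula $\delta_{k,m}=\inf_{v\in\ValXfin}A(v)/S_{k,m}(v)$ (Lemma~\ref{deltakminfProp}), the monotonicity $S_{k,m}\ge S_{k,m'}$ for $m\le m'$ (equation~\eqref{m monotonicity}), the Fekete identity $\sup_k S_{k,1}(v)=\lim_k S_{k,1}(v)=\cS_0(v)$ (Remark~\ref{Fkt rmk}), and the Boucksom--Chen weak convergence $\mu_{v,k}\to\mu_v$ (Theorem~\ref{mu}). The case $m_k=d_k-o(k^n)$ will be purely formal; the case $m_k=o(k^n)$ needs one genuine lemma.

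First I would record the elementary two-sided bounds. Since $1\le m_k\le d_k$, monotonicity in $m$ gives $\delta_{k,1}\le\delta_{k,m_k}\le\delta_{k,d_k}$, i.e.\ $\alpha_k\le\delta_{k,m_k}\le\delta_k$; using $\alpha=\inf_k\alpha_k$ and $\delta=\lim_k\delta_k$ (Definition~\ref{old notion}) this already yields $\inf_k\delta_{k,m_k}\ge\alpha$, $\liminf_k\delta_{k,m_k}\ge\alpha$, and $\limsup_k\delta_{k,m_k}\le\delta$. Moreover, for every $v$ one has $km_k S_{k,m_k}(v)=\sum_{\ell=1}^{m_k}j_{k,\ell}(v)\le\sum_{\ell=1}^{d_k}j_{k,\ell}(v)=kd_k S_{k,d_k}(v)$, hence $S_{k,m_k}(v)\le\tfrac{d_k}{m_k}S_{k,d_k}(v)$ and so $\delta_{k,m_k}\ge\tfrac{m_k}{d_k}\delta_{k,d_k}$. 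Since $d_k\sim\VolL\,k^n/n!$ with $\VolL>0$, the hypothesis $m_k=d_k-o(k^n)$ forces $m_k/d_k\to1$, whence $\liminf_k\delta_{k,m_k}\ge\lim_k\tfrac{m_k}{d_k}\delta_{k,d_k}=\delta$; combined with $\limsup_k\delta_{k,m_k}\le\delta$ this proves $\lim_k\delta_{k,m_k}=\delta$, settling the second assertion.

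For the case $m_k=o(k^n)$ it remains only to prove $\limsup_k\delta_{k,m_k}\le\alpha$, and here the crux is the following claim (a generalization of $\lim_k S_{k,1}=\cS_0$): \emph{for $v\in\ValXfin$, if $m_k=o(k^n)$ then $\lim_k S_{k,m_k}(v)=\cS_0(v)$}. The bound $S_{k,m_k}(v)\le S_{k,1}(v)\to\cS_0(v)$ is immediate; for the reverse, fix $\epsilon>0$. Using concavity of $(\Vol V_{v,\bullet}^t)^{1/n}$ on $[0,\cS_0(v))$ (Proposition~\ref{V.t}) together with the atom formula in Theorem~\ref{mu}, one checks that $\cS_0(v)$ lies in the support of $\mu_v$, so $c_\epsilon:=\mu_v\big((\cS_0(v)-\epsilon,\cS_0(v)]\big)>0$; applying the portmanteau inequality to the open set $(\cS_0(v)-\epsilon,\infty)$ and $\mu_{v,k}\to\mu_v$ gives $\liminf_k\tfrac1{d_k}\#\{\ell:\,j_{k,\ell}(v)>k(\cS_0(v)-\epsilon)\}\ge c_\epsilon>0$. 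As $d_k\asymp k^n$ while $m_k=o(k^n)$, for $k$ large $m_k$ is below this count, so (the $j_{k,\ell}(v)$ being non-increasing in $\ell$) $j_{k,\ell}(v)/k>\cS_0(v)-\epsilon$ for all $\ell\le m_k$ and therefore $S_{k,m_k}(v)>\cS_0(v)-\epsilon$; letting $\epsilon\downarrow0$ gives the claim. Granting it: given $\eta>0$, pick $v_0\in\ValXfin$ with $A(v_0)/\cS_0(v_0)<\alpha+\eta$ — possible because $\alpha=\inf_k\alpha_k=\inf_k\inf_v A(v)/S_{k,1}(v)=\inf_v A(v)/\cS_0(v)$ by Remark~\ref{Fkt rmk} — and then $\delta_{k,m_k}\le A(v_0)/S_{k,m_k}(v_0)\to A(v_0)/\cS_0(v_0)<\alpha+\eta$, so $\limsup_k\delta_{k,m_k}\le\alpha$. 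With the bounds of the previous paragraph this gives $\lim_k\delta_{k,m_k}=\alpha$, and since $\delta_{k,m_k}\ge\alpha$ for all $k$ also $\inf_k\delta_{k,m_k}=\alpha$.

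The only step that is not routine is the displayed claim, and within it the assertion that $\cS_0(v)$ belongs to the closed support of $\mu_v$, i.e.\ that every interval $(\cS_0(v)-\epsilon,\cS_0(v)]$ carries positive $\mu_v$-mass. When $\mu_v(\{\cS_0(v)\})>0$ this is trivial; when that atom vanishes one argues that $(\Vol V_{v,\bullet}^t)^{1/n}$, being concave, positive on $[0,\cS_0(v))$, and tending to $0$ as $t\to\cS_0(v)^-$, must be strictly decreasing near $\cS_0(v)$, so $\mu_v$ has strictly positive density there; the degenerate subcases $\cS_0(v)=\sigma(v)$ and $\cS_0(v)=0$ (where $S_{k,m_k}(v)$ is independent of $m$) are immediate. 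Everything else is bookkeeping with $\liminf/\limsup$ and the Hilbert-type asymptotics $d_k\sim\VolL\,k^n/n!$.
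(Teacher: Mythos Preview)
Your proof is correct and follows essentially the same route as the paper: the $m_k=d_k-o(k^n)$ case via the squeeze $\tfrac{m_k}{d_k}\delta_k\le\delta_{k,m_k}\le\delta_k$ is identical, and the $m_k=o(k^n)$ case hinges on the same key claim $\lim_k S_{k,m_k}(v)=\cS_0(v)$ (the paper's Proposition~\ref{Strong version}), followed by the same valuative $\limsup$ argument. The only cosmetic difference is in the proof of that claim: the paper uses the concavity bound $\Vol V_{v,\bullet}^{(1-\varepsilon)\cS_0(v)}\ge\varepsilon^n\VolL$ (Claim~\ref{conv lem 1}) to directly lower-bound $\dim\cF_v^{k(1-\varepsilon)\cS_0(v)}R_k$ and compare with $m_k$, whereas you reach the same conclusion via portmanteau for the weak convergence $\mu_{v,k}\to\mu_v$ together with the observation that $\cS_0(v)\in\operatorname{supp}\mu_v$---but both arguments rest on the same concavity input from Proposition~\ref{V.t}.
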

The proof relies on Proposition \ref{Strong version} and Lemma
\ref{tau=max} proved below.
\begin{proof}
    For a sequence $\{m_k\}_{k\in\NN(L)}$ such that $1\leq m_k=o(k^n)$, we rely on to Proposition \ref{Strong version} and its proof. Namely, Lemma \ref{deltakminfProp} implies that for any fixed $v_0\in\ValXfin$, 
    $$
        \limsup_{k\to\infty}\delta_{k,m_k}=\limsup_{k\to\infty}\inf_{v\in\ValX}\frac{A\left(v\right)}{S_{k,m_k}\left(v\right)}\leq\limsup_{k\to\infty}\frac{A\left(v_0\right)}{S_{k,m_k}\left(v_0\right)}=\frac{A\left(v_0\right)}{\lim\limits_{k\to\infty}S_{k,m_k}\left(v_0\right)}=\frac{A\left(v_0\right)}{\cS_0\left(v_0\right)}.
    $$
    Since $v_0$ is arbitrary, by Remark \ref{Fkt rmk} and Lemma \ref{deltakminfProp} (see also \cite[Theorem C]{BJ20}),
    $$
        \limsup_{k\to\infty}\delta_{k,m_k}\leq\inf_{v\in\ValXfin}\frac{A\left(v\right)}{\cS_0\left(v\right)}=\alpha.
    $$
    On the other hand, by Lemma \ref{deltakminfProp}, \eqref{m monotonicity}, and Remark \ref{Fkt rmk},
    $
        \delta_{k,m_k}=\inf_{v\in\ValXfin}\frac{A\left(v\right)}{S_{k,m_k}\left(v\right)}\geq\inf_{v\in\ValXfin}\frac{A\left(v\right)}{\cS_0\left(v\right)}=\alpha.
    $
    In conclusion,
    $
        \limsup_{k\to\infty}\delta_{k,m_k}\leq\alpha\leq\inf_k\delta_{k,m_k}\leq\liminf_{k\to\infty}\delta_{k,m_k}.
    $

    For a sequence $\{m_k\}_{k\in\NN(L)}$ such that $0\leq d_k-m_k=o(k^n)$, by \eqref{S squeeze},
    $
        \frac{m_k}{d_k}\delta_k\leq\delta_{k,m_k}\leq\delta_k.
    $
    By Definition \ref{old notion} this completes the proof.
\end{proof}

\begin{lemma}\label{tau=max}
    For any sequence $\{m_k\}_{k\in\NN(L)}$ such that $0\leq d_k-m_k=o(k^n)$,
    $\disp
        \lim_{k\to\infty}S_{k,m_k}=\cS_1.
    $
\end{lemma}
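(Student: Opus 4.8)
The plan is to prove the statement pointwise: fix $v\in\ValXfin$ and show $S_{k,m_k}(v)\to\cS_1(v)$ as $k\to\infty$ along $\NN(L)$. The strategy is a squeeze — bound $S_{k,m_k}(v)$ both below and above by $S_{k,d_k}(v)$, up to a factor that tends to $1$ under the hypothesis $d_k-m_k=o(k^n)$.

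First I would recall that $S_{k,d_k}(v)\to\cS_1(v)$ by Definition \ref{S0Def} (cf. \cite[Corollary 3.6]{BJ20}), and that this limit is finite because $v\in\ValXfin$; in particular $\{S_{k,d_k}(v)\}_k$ is bounded. The lower bound is then immediate from the monotonicity \eqref{m monotonicity}: since $m_k\le d_k$, we have $S_{k,m_k}(v)\ge S_{k,d_k}(v)$, so $\liminf_k S_{k,m_k}(v)\ge\cS_1(v)$.

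For the upper bound I would use that the jumping numbers $j_{k,\ell}(v)$ are nonnegative (Definition \ref{JumpingDef}), so that by Definition \ref{SkmDef},
$$
km_k\,S_{k,m_k}(v)=\sum_{\ell=1}^{m_k}j_{k,\ell}(v)\ \le\ \sum_{\ell=1}^{d_k}j_{k,\ell}(v)=kd_k\,S_{k,d_k}(v),
$$
i.e. $S_{k,m_k}(v)\le\frac{d_k}{m_k}\,S_{k,d_k}(v)$. Since $L$ is big, $d_k=\frac{\Vol L}{n!}\,k^n(1+o(1))$, hence $d_k\ge c\,k^n$ for some $c>0$ and all large $k$; combined with $d_k-m_k=o(k^n)$ this gives $\frac{d_k}{m_k}=\big(1-\tfrac{d_k-m_k}{d_k}\big)^{-1}\to1$. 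Passing to $\limsup$ and using the boundedness of $S_{k,d_k}(v)$ yields $\limsup_k S_{k,m_k}(v)\le\cS_1(v)$. Together with the lower bound this gives $\lim_k S_{k,m_k}(v)=\cS_1(v)$, which is the claim.

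I do not expect a genuine obstacle here: the proof is an elementary squeeze, the only content being that deleting the $d_k-m_k$ smallest (nonnegative) jumping numbers rescales the average by at most the factor $d_k/m_k$, and that the hypothesis $d_k-m_k=o(k^n)$ is precisely calibrated against the polynomial growth $d_k\asymp k^n$ to force $d_k/m_k\to1$. The only mild care needed is that $\cS_1(v)<\infty$ on $\ValXfin$, which is what legitimizes splitting $\lim\big(\tfrac{d_k}{m_k}S_{k,d_k}(v)\big)$ as a product of limits.
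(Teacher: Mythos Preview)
Your proof is correct and is essentially identical to the paper's own argument: the paper also sandwiches $S_{k,m_k}$ via \eqref{m monotonicity} on the left and the trivial inequality $\sum_{\ell=1}^{m_k}j_{k,\ell}\le\sum_{\ell=1}^{d_k}j_{k,\ell}$ on the right to obtain $S_{k,d_k}\le S_{k,m_k}\le\frac{d_k}{m_k}S_{k,d_k}$, then takes the limit. Your additional remarks (that $d_k\asymp k^n$ forces $d_k/m_k\to1$, and that finiteness of $\cS_1(v)$ is needed to split the product limit) simply make explicit what the paper leaves implicit.
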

\begin{proof}
    By \eqref{m monotonicity},
    \begin{equation}\label{S squeeze}
        S_{k,d_k}\leq S_{k,m_k}=\frac{1}{km_k}\sum_{\ell=1}^{m_k}j_{k,\ell}\leq\frac{1}{km_k}\sum_{\ell=1}^{d_k}j_{k,\ell}=\frac{d_k}{m_k}S_{k,d_k}.
    \end{equation}
    Taking the limit in $k$ completes the proof.
\end{proof}

\begin{proposition}\label{Strong version}

    For any sequence $\{m_k\}_{k\in\NN(L)}$ such that $1\leq m_k=o(k^n)$,
    $
        \lim_{k\to\infty}S_{k,m_k}=\sup_kS_{k,m_k}=\cS_0.
    $
\end{proposition}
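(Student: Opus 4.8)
The plan is to prove the displayed equality of functions on $\ValX$ one valuation at a time: fix $v\in\ValX$ and show $\lim_{k\to\infty}S_{k,m_k}(v)=\sup_{k}S_{k,m_k}(v)=\cS_0(v)$. The upper bound is immediate. By the monotonicity \eqref{m monotonicity} in $m$ together with $m_k\ge1$, and by Definition \ref{S0Def},
$$
S_{k,m_k}(v)\le S_{k,1}(v)=\frac{j_{k,1}(v)}{k}\le\cS_0(v)\qquad\text{for every }k\in\NN(L),
$$
so both $\sup_{k}S_{k,m_k}(v)\le\cS_0(v)$ and $\limsup_{k\to\infty}S_{k,m_k}(v)\le\cS_0(v)$.

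For the matching lower bound, the key point is that the largest $\dim V_{v,k}^t$ among the jumping numbers $j_{k,\bullet}(v)$ are all at least $kt$: by \eqref{jumping number} and Definition \ref{GradedLinearSerierDef}, $j_{k,\ell}(v)\ge kt$ whenever $\ell\le\dim\cF_v^{kt}R_k=\dim V_{v,k}^t$. Since the jumping numbers are nonnegative, this yields, for any $t\ge0$ and any $m$,
$$
km\,S_{k,m}(v)=\sum_{\ell=1}^{m}j_{k,\ell}(v)\ \ge\ \min\{m,\dim V_{v,k}^t\}\cdot kt ,
$$
hence $S_{k,m}(v)\ge t\cdot\min\{m,\dim V_{v,k}^t\}/m$. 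Now fix $t$ with $0\le t<\cS_0(v)$. By Claim \ref{conv lem 1}, $\Vol V_{v,\bullet}^t\ge(1-t/\cS_0(v))^n\Vol L=:c_t>0$ (reading $1-t/\cS_0(v)=1$ when $\cS_0(v)=\infty$), so by the definition \eqref{VolGradedEq} of this volume as a limit, $\dim V_{v,k}^t\ge\frac{c_t}{2\,n!}\,k^n$ for all large $k$. Because $m_k=o(k^n)$, it follows that $m_k\le\dim V_{v,k}^t$ for all large $k$, and therefore $S_{k,m_k}(v)\ge t$ for all large $k$; thus $\liminf_{k\to\infty}S_{k,m_k}(v)\ge t$. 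Letting $t\uparrow\cS_0(v)$ gives $\liminf_{k\to\infty}S_{k,m_k}(v)\ge\cS_0(v)$ (the inequality being vacuous, hence still correct, when $\cS_0(v)=0$).

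Combining the two bounds yields $\lim_{k\to\infty}S_{k,m_k}(v)=\cS_0(v)$; and since every term satisfies $S_{k,m_k}(v)\le\cS_0(v)$ while the limit equals $\cS_0(v)$, also $\sup_{k}S_{k,m_k}(v)=\cS_0(v)$. The same reasoning covers the case $\cS_0(v)=\infty$: both sides are then $+\infty$, the lower bound coming from $S_{k,m_k}(v)\ge t$ for every fixed $t$. The only input that is not pure bookkeeping with jumping numbers is the growth estimate $\dim V_{v,k}^t\gtrsim k^n$ for $t<\cS_0(v)$, which is exactly what Claim \ref{conv lem 1} and \eqref{VolGradedEq} provide; correspondingly, the one place the hypothesis $m_k=o(k^n)$ enters is to guarantee that $V_{v,k}^t$ eventually contains at least $m_k$ linearly independent sections of $v$-value $\ge kt$. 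I expect this step — matching the sub-polynomial growth of $m_k$ against the $k^n$-growth of the truncated linear series $V_{v,k}^t$ — to be the only substantive point, the rest being formal.
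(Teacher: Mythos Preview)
Your proof is correct and follows essentially the same approach as the paper's: both obtain the upper bound from $S_{k,m_k}\le S_{k,1}\le\cS_0$, and for the lower bound both invoke Claim~\ref{conv lem 1} to get $\dim V_{v,k}^t\gtrsim k^n$ for each fixed $t<\cS_0(v)$, then use $m_k=o(k^n)$ to conclude $j_{k,m_k}(v)\ge kt$ and hence $S_{k,m_k}(v)\ge t$ for large $k$. The only cosmetic difference is that the paper parametrizes by $\varepsilon$ via $t=(1-\varepsilon)\cS_0(v)$, while you parametrize directly by $t$; your treatment of the edge cases $\cS_0(v)\in\{0,\infty\}$ is a harmless addition (the paper restricts to $v\in\ValXfin$).
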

\begin{proof}
    For $\varepsilon\in(0,1)$, there is $k_1$ so that for $\NN(L)\ni k\geq k_1$,
    $
 \disp       m_k\leq\frac{\varepsilon^n\VolL}{2n!}k^n.
    $
    Fix $v\in\ValXfin$.
    By~Claim~\ref{conv lem 1},
    $$
        \lim_{k\to\infty}\frac{n!}{k^n}\dim\cF_v^{k\left(1-\varepsilon\right)\cS_0\left(v\right)}R_k=\Vol V_{v,\bullet}^{\left(1-\varepsilon\right)\cS_0\left(v\right)}\geq\varepsilon^n\VolL.
    $$
    Hence there is $k_2$ such that for $k\geq k_2$,
    $$
        \dim\cF_v^{k\left(1-\varepsilon\right)\cS_0\left(v\right)}R_k\geq\frac{\varepsilon^n\VolL}{2n!}k^n.
    $$
    (In particular, $k\in\NN(L)$ whenever $k\geq k_2$.) By \eqref{jumping number}, for $k\geq\max\{k_1,k_2\}$,
    $
       j_{k,m_k}\left(v\right)\geq k\left(1-\varepsilon\right)\cS_0\left(v\right).
    $
    By \eqref{m monotonicity},
    $$
        S_{k,m_k}\left(v\right)=\frac{1}{km_k}\sum_{\ell=1}^{m_k}j_{k,\ell}\left(v\right)\geq\frac{1}{k}j_{k,m_k}\left(v\right)\geq\left(1-\varepsilon\right)\cS_0\left(v\right).
    $$
    Since $\varepsilon$ is arbitrary,
    \begin{equation}\label{liminf geq}
        \liminf_{k\to\infty}S_{k,m_k}\left(v\right)\geq\cS_0\left(v\right).
    \end{equation}
    On the other hand, by \eqref{def T} and \eqref{m monotonicity},
    \begin{equation}\label{limsup leq}
        S_{k,m_k}\left(v\right)\leq S_{k,1}\left(v\right)
        \leq\cS_0\left(v\right).
    \end{equation}
    Combining \eqref{liminf geq}--\eqref{limsup leq},
    $
        \limsup_{k\to\infty}S_{k,m_k}\left(v\right)\leq\sup_kS_{k,m_k}\left(v\right)\leq\cS_0\left(v\right)\leq\liminf_{k\to\infty}S_{k,m_k}\left(v\right).
    $
\end{proof}

\subsection{Joint monotonicity}
This subsection collects basic monotonicity properties of $S_{k,m}$ and $\delta_{k,m}$.
\begin{lemma}\label{superadditive}
    For $k,k'\in\NN(L)$,
    $
        \left(k+k'\right)S_{k+k',m}\geq kS_{k,m}+k'S_{k',1}.
    $
\end{lemma}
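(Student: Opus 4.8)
The plan is to mimic the Fekete-type argument of Remark \ref{Fkt rmk}, combined with the variational characterization of $S_{k,m}$ from Lemma \ref{compatible basis lem}. Fix $v\in\ValX$ and $m\in\{1,\dots,d_k\}$; note that every quantity appearing is finite, since each $j_{k,\ell}(v)$ is the vanishing order of some nonzero section of a fixed power of $L$. First I would invoke Lemma \ref{compatible basis lem} to choose a linearly independent family $\{s_\ell\}_{\ell=1}^m\subset R_k$ compatible with $\cF_v$, so that $v(s_\ell)=j_{k,\ell}(v)$ and $\sum_{\ell=1}^m v(s_\ell)=kmS_{k,m}(v)$. Next, since $\dim\cF_v^{j_{k',1}(v)}R_{k'}\ge1$ by \eqref{jumping number}, pick a nonzero $s'\in R_{k'}$ with $v(s')=j_{k',1}(v)=k'S_{k',1}(v)$.

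The key step is to transport these sections to level $k+k'$ by tensoring with $s'$: consider the family $\{s_\ell\otimes s'\}_{\ell=1}^m\subset R_{k+k'}$. Multiplication by the fixed nonzero section $s'$ is an injective linear map $R_k\to R_{k+k'}$ (if $s\otimes s'$ vanishes identically on the irreducible variety $X$ while $s'\not\equiv0$, then $s\equiv0$ since $\CC(X)$ is a field), so this family is linearly independent; in particular $m\le d_k\le d_{k+k'}$, whence $S_{k+k',m}$ is defined and $k+k'\in\NN(L)$. By the homomorphism property of valuations (Definition \ref{valuation}), $v(s_\ell\otimes s')=v(s_\ell)+v(s')=j_{k,\ell}(v)+k'S_{k',1}(v)$ for each $\ell$. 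Since $(k+k')mS_{k+k',m}(v)$ is, by Lemma \ref{compatible basis lem}, the maximum over all linearly independent $m$-families in $R_{k+k'}$ of the total valuation, applying it to $\{s_\ell\otimes s'\}_{\ell=1}^m$ gives
$$
(k+k')mS_{k+k',m}(v)\ \ge\ \sum_{\ell=1}^m v(s_\ell\otimes s')\ =\ \sum_{\ell=1}^m j_{k,\ell}(v)+mk'S_{k',1}(v)\ =\ kmS_{k,m}(v)+mk'S_{k',1}(v).
$$
Dividing by $m$ yields $(k+k')S_{k+k',m}(v)\ge kS_{k,m}(v)+k'S_{k',1}(v)$ pointwise in $v$, hence as functions on $\ValX$.

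I do not expect a genuine obstacle: the argument is the natural super-additivity statement and is essentially a one-line consequence of Lemma \ref{compatible basis lem} and the homomorphism property. The only points requiring a word of care are (a) the linear independence of the tensored family, which follows from $X$ being a variety, and (b) noting that $S_{k+k',m}$ is indeed defined, i.e.\ $m\le d_{k+k'}$, which comes for free from the same injection $R_k\hookrightarrow R_{k+k'}$. One could alternatively phrase the whole step directly through the filtrations $\cF_v^\lambda$ without choosing explicit bases, but the basis version above is the most transparent.
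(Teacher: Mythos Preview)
Your proof is correct and follows essentially the same approach as the paper: both pick a section $s'\in R_{k'}$ with $v(s')=k'S_{k',1}(v)$ and use that tensoring by $s'$ injects $R_k$ into $R_{k+k'}$ while shifting the filtration. The only cosmetic difference is that the paper phrases this directly as an injection $\cF_v^\lambda R_k\hookrightarrow\cF_v^{\lambda+k'S_{k',1}(v)}R_{k+k'}$ and reads off $j_{k,\ell}(v)\le j_{k+k',\ell}(v)-k'S_{k',1}(v)$, whereas you feed the tensored family into the variational formula of Lemma~\ref{compatible basis lem}; as you yourself note at the end, these are equivalent presentations of the same argument.
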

\begin{proof}
    For $v\in\ValX$, let $s\in H^0\left(X,k'L\right)$ be such that $v(s)=k'S_{k',1}\left(v\right)$ (recall Lemma \ref{compatible basis lem}). The map
    $$
        \cF_v^\lambda R_k\xrightarrow{\otimes s}\cF_v^{\lambda+k'S_{k',1}\left(v\right)} R_{k+k'}
    $$
    is an injection. In particular,
    \begin{align*}
        j_{k,\ell}\left(v\right)&=\max\left\{\lambda\,:\,\dim\cF_v^\lambda R_k\geq\ell\right\}
        \leq\max\left\{\lambda\,:\,\dim\cF_v^{\lambda+k'S_{k',1}\left(v\right)}R_{k+k'}\geq\ell\right\}\\
        &=\max\left\{\lambda\,:\,\dim\cF_v^\lambda R_{k+k'}\geq\ell\right\}-k'S_{k',1}\left(v\right)
        =j_{k+k',\ell}\left(v\right)-k'S_{k',1}\left(v\right).
    \end{align*}
    It follows that
    $$
        S_{k,m}=\frac{1}{km}\sum_{\ell=1}^mj_{k,\ell}\leq\frac{1}{km}\sum_{\ell=1}^mj_{k+k',\ell}-\frac{k'}{k}S_{k',1}=\frac{k+k'}{k}S_{k+k',m}-\frac{k'}{k}S_{k',1}.
    $$
\end{proof}
\begin{lemma}\label{multiple lem}
    For $k\in\NN(L)$, $m\in\{1,\ldots,d_k\}$, and $\ell\in\NN$,
    $
        S_{k,m}\leq S_{\ell k,m}.
    $
\end{lemma}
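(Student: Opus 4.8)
The plan is to induct on $\ell$, the base case $\ell=1$ being the trivial inequality $S_{k,m}\le S_{k,m}$, and to feed the super-additivity estimate of Lemma~\ref{superadditive} into the monotonicity \eqref{m monotonicity}. First I would record that all the indices occurring are legitimate: if $0\ne s\in R_k$ then $0\ne s^{\otimes\ell}\in R_{\ell k}$, since the section ring $R$ is an integral domain ($X$ being irreducible), so $\ell k\in\NN(L)$; moreover multiplication by $t^{\otimes(\ell-1)}$ for a fixed $0\ne t\in R_k$ is a linear injection $R_k\hookrightarrow R_{\ell k}$, whence $d_{\ell k}\ge d_k\ge m$ and $S_{\ell k,m}$ is defined.

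For the inductive step, assuming $S_{k,m}\le S_{\ell k,m}$, I would apply Lemma~\ref{superadditive} with its first index equal to $\ell k$ and its second equal to $k$, obtaining
\[
(\ell+1)k\,S_{(\ell+1)k,m}\ \geq\ \ell k\,S_{\ell k,m}+k\,S_{k,1}.
\]
Since $S_{\ell k,m}\ge S_{k,m}$ by the induction hypothesis and $S_{k,1}\ge S_{k,m}$ by \eqref{m monotonicity} (as $1\le m$), the right-hand side is at least $\ell k\,S_{k,m}+k\,S_{k,m}=(\ell+1)k\,S_{k,m}$, so dividing by $(\ell+1)k$ yields $S_{(\ell+1)k,m}\ge S_{k,m}$, closing the induction.

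I do not expect any real obstacle here: essentially all the work is already done inside Lemma~\ref{superadditive}, whose proof is the ``multiply by a section of maximal vanishing order'' trick, and the only point needing a word of care is the bookkeeping that $\ell k\in\NN(L)$ and $m\le d_{\ell k}$. If one preferred to avoid the induction entirely, the same conclusion follows valuation-by-valuation in one step: for $v\in\ValX$ and a subspace $U\subseteq\cF_v^\lambda R_k$ of dimension $a$ with basis $u_1,\dots,u_a$, the family $\{u_1^{\otimes(\ell-1)}\otimes u_i\}_{i=1}^a$ lies in $\cF_v^{\ell\lambda}R_{\ell k}$ and is linearly independent (because $u_1^{\otimes(\ell-1)}\otimes(\sum_i c_i u_i)=0$ forces $\sum_i c_i u_i=0$ in the domain $R$), so $\dim\cF_v^{\ell\lambda}R_{\ell k}\ge\dim\cF_v^\lambda R_k$ for every $\lambda$; by \eqref{jumping number} this gives $j_{\ell k,j}(v)\ge\ell\,j_{k,j}(v)$ for each $j$, hence $S_{\ell k,m}(v)\ge S_{k,m}(v)$.
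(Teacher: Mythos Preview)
Your inductive argument is correct and close in spirit to the paper's proof: both rest on Lemma~\ref{superadditive} together with the monotonicity \eqref{m monotonicity}. The paper applies the superadditive estimate in a single shot with $k'=(\ell-1)k$, after first observing (via the $(\ell-1)$-th tensor power map) that $S_{k,m}\le S_{(\ell-1)k,1}$; you instead iterate $\ell-1$ times with $k'=k$, which lets you get by with only the simpler inequality $S_{k,1}\ge S_{k,m}$. The two are equally short and essentially the same idea.

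Your alternative direct argument---showing $\dim\cF_v^{\ell\lambda}R_{\ell k}\ge\dim\cF_v^\lambda R_k$ via the injection $u\mapsto u_1^{\otimes(\ell-1)}\otimes u$ and deducing $j_{\ell k,j}(v)\ge\ell\,j_{k,j}(v)$ from \eqref{jumping number}---is correct and genuinely different: it bypasses Lemma~\ref{superadditive} entirely and yields a slightly stronger pointwise inequality on jumping numbers. This is arguably the cleanest route of the three.
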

\begin{proof}
    Let $k_\ell=(\ell-1)k$. Notice that for any $s\in R_k$,
    $s^{\otimes(\ell-1)}\in R_{k_\ell}$. In particular, for $v\in\ValX$,
    $$
        \max_{s\in R_{k_\ell}}v\left(s\right)\geq\left(\ell-1\right)\max_{s\in R_k}v\left(s\right).
    $$
    By \eqref{m monotonicity},
    $$
        S_{k,m}\left(v\right)\leq S_{k,1}\left(v\right)=\frac{1}{k}\max_{s\in R_k}v\left(s\right)\leq\frac{1}{k_\ell}\max_{s\in R_{k_\ell}}v\left(s\right)=S_{k_\ell,1}\left(v\right).
    $$
    By Lemma \ref{superadditive} and \eqref{m monotonicity},
    $$
        S_{k,m}\left(v\right)\leq\frac{k+k_\ell}{k}S_{k+k_\ell,m}\left(v\right)-\frac{k_\ell}{k}S_{k_\ell,1}\left(v\right)\leq\frac{k+k_\ell}{k}S_{k+k_\ell,m}\left(v\right)-\frac{k_\ell}{k}S_{k,m}\left(v\right),
    $$
    i.e.,
    $
        S_{k,m}\left(v\right)\leq S_{k+k_\ell,m}\left(v\right).
    $
\end{proof}

Lemma \ref{multiple lem} together with 
Lemma \ref{deltakminfProp} and \eqref{m monotonicity}
imply:

\begin{corollary}
Let $k\in\NN(L)$, $\ell\in\NN$, $m,m'\in\{1,\ldots,d_k\}$ with $m\leq m'$.
Then
    $
        \delta_{k,m}\leq\delta_{k,m'},
    $
    and
    $
        \delta_{\ell k,m}\leq\delta_{k,m}.
    $
\end{corollary}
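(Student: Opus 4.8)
The plan is to deduce both inequalities from the valuative characterization in Lemma \ref{deltakminfProp}, namely $\delta_{k,m}=\inf_{v\in\ValXfin}A(v)/S_{k,m}(v)$, combined with the two monotonicity facts already in hand: the monotonicity \eqref{m monotonicity} of $S_{k,m}$ in $m$, and Lemma \ref{multiple lem}, $S_{k,m}\le S_{\ell k,m}$. The only substantive point is that passing to pointwise reciprocals reverses inequalities and that the infimum of a family of $[0,\infty]$-valued functions is monotone in the family; everything else is bookkeeping. First I would record that for $v\in\ValXfin$ one has $A(v)>0$ (by the klt hypothesis) and $S_{k,m}(v)\le S_{k,1}(v)\le\cS_0(v)<\infty$, so each ratio $A(v)/S_{k,m}(v)$ is a well-defined element of $(0,\infty]$ with the convention $A(v)/0=+\infty$.

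For the inequality $\delta_{k,m}\le\delta_{k,m'}$: I would fix $v\in\ValXfin$, use $m\le m'$ and \eqref{m monotonicity} to get $S_{k,m}(v)\ge S_{k,m'}(v)$, hence $A(v)/S_{k,m}(v)\le A(v)/S_{k,m'}(v)$, and then take the infimum over $v\in\ValXfin$, invoking Lemma \ref{deltakminfProp} for both $\delta_{k,m}$ and $\delta_{k,m'}$. For the inequality $\delta_{\ell k,m}\le\delta_{k,m}$: I would first check that $\delta_{\ell k,m}$ is even defined, i.e.\ that $\ell k\in\NN(L)$ and $m\le d_{\ell k}$; this follows because the multiplication map $R_k^{\otimes(\ell-1)}\to R_{(\ell-1)k}$ is nonzero, so tensoring with a fixed nonzero section of $(\ell-1)kL$ embeds $R_k\hookrightarrow R_{\ell k}$ and gives $d_{\ell k}\ge d_k\ge m$. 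Then for each $v\in\ValXfin$, Lemma \ref{multiple lem} gives $S_{k,m}(v)\le S_{\ell k,m}(v)$, hence $A(v)/S_{\ell k,m}(v)\le A(v)/S_{k,m}(v)$, and taking infima over $v\in\ValXfin$ and applying Lemma \ref{deltakminfProp} to $\delta_{\ell k,m}$ and $\delta_{k,m}$ finishes it.

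I do not expect a genuine obstacle here — this is precisely the assembly flagged in the sentence preceding the statement. The one place to be slightly careful is the degenerate arithmetic: when $S_{k,m}(v)=0$ the ratio is $+\infty$, which is harmless since the infimum will be realized at other valuations, and $A(v)=\infty$ does not arise on $\ValXfin$ (indeed $A$ is finite on divisorial valuations and finite-vanishing-order valuations of interest). Making these conventions explicit once at the outset is all that is needed for the argument to go through verbatim.
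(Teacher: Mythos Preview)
Your proposal is correct and follows essentially the same route as the paper's own proof: both inequalities are read off from Lemma \ref{deltakminfProp} together with \eqref{m monotonicity} and Lemma \ref{multiple lem}, respectively. Your extra care in checking that $\ell k\in\NN(L)$ and $m\le d_{\ell k}$ is a nice touch the paper leaves implicit.
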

\begin{proof}
    By Lemma \ref{deltakminfProp} and \eqref{m monotonicity},
    $
    \disp
        \delta_{k,m}=\inf_{v\in\ValX}\frac{A\left(v\right)}{S_{k,m}\left(v\right)}\leq\inf_{v\in\ValX}\frac{A\left(v\right)}{S_{k,m'}\left(v\right)}=\delta_{k,m'}.
    $
    By Lemma \ref{multiple lem} and Lemma \ref{deltakminfProp},
    $
    \disp    \delta_{\ell k,m}=\inf_{v\in\ValX}\frac{A\left(v\right)}{S_{\ell k,m}\left(v\right)}\leq\inf_{v\in\ValX}\frac{A\left(v\right)}{S_{k,m}\left(v\right)}=\delta_{k,m}.
    $
\end{proof}

\begin{corollary}
\lb{BirkarCor}
    Recall Definition \ref{old notion}. Suppose $\alpha=\alpha_\ell$ for some $\ell\in\NN(L)$. Then
    $
        0\leq\alpha_k-\alpha=O\left(1/k\right), k\in\NN(L).
    $
\end{corollary}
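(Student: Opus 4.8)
The plan is to get the lower bound for free and to extract the rate in the upper bound from the hypothesis that $\alpha$ is attained at a \emph{fixed finite level} $\ell$. The lower bound $\alpha_k-\alpha\ge0$ is immediate from Definition \ref{old notion}, since $\alpha=\inf_{\NN(L)}\alpha_k$. For the upper bound, first apply Lemma \ref{birlem} to fix a divisorial valuation $v_0\in\ValXdiv\subseteq\ValXfin$ with $\alpha_\ell=\delta_{\ell,1}=A(v_0)/S_{\ell,1}(v_0)$; here $S_{\ell,1}(v_0)>0$ because $\alpha_\ell<\infty$ (a nonzero section of the big bundle $\ell L$ has nonempty zero locus, so $\lct$, and hence $\alpha_\ell$, is finite). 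By Lemma \ref{compatible basis lem} with $m=1$, pick $s_0\in R_\ell$ with $v_0(s_0)=\ell\,S_{\ell,1}(v_0)>0$.

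The crux is that this single section, tensored with itself, produces good sections at \emph{every} level $k$. Choose $k_0$ with $\ZZ_{\ge k_0}\subseteq\NN(L)$, set $N:=\lceil k_0/\ell\rceil$ and $C_1:=(N+1)\ell$, and for $k\in\NN(L)$ with $k\ge C_1$ write $k=q\ell+r$ with $q:=\lfloor k/\ell\rfloor-N\ge1$ and $r:=k-q\ell\in[N\ell,(N+1)\ell)\subseteq\ZZ_{\ge k_0}\subseteq\NN(L)$, so that $r$ is bounded independently of $k$ and $R_r\ne0$. Pick any nonzero $s_1\in R_r$; then $s_0^{\otimes q}\otimes s_1\in R_k\setminus\{0\}$, and by the homomorphism property of valuations (as in Remark \ref{Fkt rmk})
$$
v_0\bigl(s_0^{\otimes q}\otimes s_1\bigr)=q\,v_0(s_0)+v_0(s_1)\ge q\ell\,S_{\ell,1}(v_0)=(k-r)\,S_{\ell,1}(v_0).
$$
Hence, by Lemma \ref{compatible basis lem}, $k\,S_{k,1}(v_0)=\max_{s\in R_k}v_0(s)\ge(k-r)S_{\ell,1}(v_0)$, so $S_{k,1}(v_0)\ge(1-C_1/k)S_{\ell,1}(v_0)$. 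Feeding $v=v_0$ into the valuative formula $\alpha_k=\delta_{k,1}=\inf_{v\in\ValXfin}A(v)/S_{k,1}(v)$ of Lemma \ref{deltakminfProp} gives, for $k\ge 2C_1$,
$$
\alpha_k\le\frac{A(v_0)}{S_{k,1}(v_0)}\le\frac{A(v_0)}{(1-C_1/k)\,S_{\ell,1}(v_0)}=\frac{\alpha_\ell}{1-C_1/k}=\frac{\alpha}{1-C_1/k},
$$
so $0\le\alpha_k-\alpha\le\alpha C_1/(k-C_1)\le 2\alpha C_1/k$. For the finitely many $k\in\NN(L)$ with $k<2C_1$, each $\alpha_k$ is finite, so after enlarging the constant one obtains $0\le\alpha_k-\alpha=O(1/k)$ on all of $\NN(L)$.

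The only nontrivial point — and the place the hypothesis is used — is recognizing that $\alpha=\alpha_\ell$ localizes the computation of $\alpha$ to the fixed level $\ell$, supplying the single building block $s_0$ whose tensor powers control $S_{k,1}(v_0)$ up to a $(1-C_1/k)$ factor \emph{uniformly in $k$}; without a fixed level the same argument only yields $\alpha_k\to\alpha$ with no rate (cf. the proof of Theorem \ref{sub main thm}). Everything else is bookkeeping: decomposing $k=q\ell+r$ so that the remainder exponent $r$ lies in $\NN(L)$ and stays bounded, which is arranged by absorbing a fixed number $N$ of extra copies of $\ell$ into $r$.
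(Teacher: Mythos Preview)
Your proof is correct and follows essentially the same approach as the paper: fix a divisorial valuation $v_0$ computing $\alpha_\ell$ via Lemma \ref{birlem}, then use tensor powers of a maximizing section at level $\ell$ (together with a bounded remainder in $\NN(L)$) to show $S_{k,1}(v_0)\ge(1-O(1/k))S_{\ell,1}(v_0)$. The paper packages the tensor-power step through the abstract superadditivity of Remark \ref{Fkt rmk} and Lemma \ref{multiple lem} (and records the intermediate fact $S_{\ell,1}(w)=\cS_0(w)$), whereas you construct the section $s_0^{\otimes q}\otimes s_1$ explicitly and skip the detour through $\cS_0$; these are cosmetic differences only.
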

\begin{proof}
    By Lemma \ref{birlem}, there is $w\in\ValXdiv$ such that
    $
        \alpha_\ell=\frac{A\left(w\right)}{S_{\ell,1}\left(w\right)}.
    $
    By Remark \ref{Fkt rmk} and Lemma \ref{deltakminfProp},
    $
        \alpha=\inf_k\alpha_k=\inf_{v\in\ValXdiv}\frac{A\left(v\right)}{\cS_0\left(v\right)}.
    $
    Recall \eqref{def T}. Combining,
    $$
        \alpha_\ell=\frac{A\left(w\right)}{S_{\ell,1}\left(w\right)}\geq\frac{A\left(w\right)}{\cS_0\left(w\right)}\geq\inf_{v\in\ValXdiv}\frac{A\left(v\right)}{\cS_0\left(v\right)}=\alpha.
    $$
    Since $\alpha_\ell=\alpha$,
    \begin{equation}\label{w S0}
        S_{\ell,1}\left(w\right)=\cS_0\left(w\right),
    \end{equation}
    and
    \begin{equation}\label{w alpha}
        \alpha=\frac{A\left(w\right)}{\cS_0\left(w\right)}.
    \end{equation}
    Pick $\ell_0$ sufficiently large so that for any $k\geq\ell_0$, $k\in\NN(L)$. Using Euclidean division, for $k\geq\ell_0$,
    $
        k-\ell_0=q\left(k\right)\ell+r\left(k\right),
    $
    where $q(k)\geq0$ and $0\leq r(k)<\ell$. By Remark \ref{Fkt rmk}, Lemma \ref{multiple lem} (with $m=1$), and \eqref{w S0},
    \begin{align*}
        kS_{k,1}\left(w\right)&\geq q\left(k\right)\ell S_{q\left(k\right)\ell,1}\left(w\right)+\left(\ell_0+r\left(k\right)\right)S_{\ell_0+r\left(k\right),1}\left(w\right)\\
        &\geq q\left(k\right)\ell S_{\ell,1}\left(w\right)+\left(\ell_0+r\left(k\right)\right)S_{\ell_0+r\left(k\right),1}\left(w\right)\\
        &\geq q\left(k\right)\ell S_{\ell,1}\left(w\right)
        =\left(k-r\left(k\right)-\ell_0\right)\cS_0\left(w\right)
        >\left(k-\ell-\ell_0\right)\cS_0\left(w\right).
    \end{align*}
    By \eqref{w alpha}, for $k>\ell+\ell_0$,
    $$
        \alpha_k\leq\frac{A\left(w\right)}{S_{k,1}\left(w\right)}<\frac{k}{k-\ell-\ell_0}\frac{A\left(w\right)}{\cS_0\left(w\right)}=\frac{k}{k-\ell-\ell_0}\alpha,
    $$
    i.e.,
    $\disp
        \alpha_k-\alpha<\frac{\ell+\ell_0}{k-\ell-\ell_0}\alpha=O\left(k^{-1}\right).
    $
\end{proof}

\subsection{The ccdf, quantile, and tail distribution associated to a valuation}

In this subsection we introduce the ccdf, quantile, and tail distribution associated to a valuation.

\bdefn
\lb{ccdfDef}
Let $v\in \ValXfin$.
The {\it complementary cumulative distribution function (ccdf)}
of $v$ is~(recall~\eqref{muvEq})
$$
F_v(t):=\mu_v\big([t,\cS_0(v)]\big), \q t\in[0,\cS_0(v)].
$$
The {\it quantile function associated to $v$} is its
generalized inverse
$$
Q_v:= \sup F_v^{-1}, \q \hbox{\ on } [0,1],
$$
where $F_v^{-1}$ is a set-valued map and 
$Q_v(\tau)$
equals the supremum of the set $F_v^{-1}(\tau)\subset\RR$. 
The {\it $\tau$-tail distribution associated to $v$} is
$\boldsymbol{1}_{[Q_v(\tau),\cS_0(v)]}\mu_v$.
\edefn

The barycenter of a measure $\nu$ on $\RR$ is
$
b\mskip1mu(\nu):=\int_\RR td\nu(t)\big/\int_\RR d\nu.
$

\begin{proposition}\label{S_tau def}
    Suppose $m_k\in\{1,\ldots,d_k\}$ for $k\in\NN(L)$,
    and
    $
        \lim_{k\to\infty}{m_k}/{d_k}=\tau\in\left[0,1\right].
    $
    The limit
    $$
        \cS_\tau:=\lim_{k\to\infty}S_{k,m_k}
    $$
    exists on $\ValXfin$ and is independent of the choice of $\{m_k\}_{k\in\NN(L)}$. Moreover, $\cS_\tau$ 
    is the barycenter of the $\tau$-tail distribution of $v$  (recall (\ref{muvEq})
    and Definition \ref{ccdfDef}),
    $$
\cS_\tau=b\mskip1mu\left(\boldsymbol{1}_{[Q_v(\tau),\cS_0(v)]}\mu_v\right).
    $$
    In particular, $\cS_\tau$ is non-increasing with respect to $\tau$.
\end{proposition}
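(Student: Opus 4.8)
\emph{Approach.} The plan is to realize each $S_{k,m_k}(v)$ as the mean of the upper $(m_k/d_k)$-tail of the Boucksom--Chen empirical measure $\mu_{v,k}$ of \eqref{muvkEq}, and then to pass to the limit using Theorem \ref{mu}. Fix $v\in\ValXfin$, so that every atom $\tfrac1k j_{k,\ell}(v)$ of $\mu_{v,k}$ lies in the fixed compact interval $[0,\cS_0(v)]$ (recall $S_{k,1}(v)\le\cS_0(v)$). By \eqref{jumping number}, $\tfrac1k j_{k,\ell}(v)\ge t$ iff $\dim\cF_v^{kt}R_k\ge\ell$, i.e.\ iff $\ell\le d_k\,\mu_{v,k}([t,\infty))$; hence the non-increasing step function $g_k(s):=\tfrac1k j_{k,\lceil sd_k\rceil}(v)$ on $(0,1]$ is, up to the reparametrization $s\mapsto\lceil sd_k\rceil/d_k$, the generalized inverse of the ccdf $t\mapsto\mu_{v,k}([t,\infty))$, and integrating it over $(0,m_k/d_k]$ gives the exact identity
\[
S_{k,m_k}(v)=\frac{d_k}{m_k}\int_0^{m_k/d_k}g_k(s)\,ds.
\]
If $\tau=0$ then $m_k=o(k^n)$, and $\lim_k S_{k,m_k}(v)=\cS_0(v)$ is exactly Proposition \ref{Strong version}, consistent with Definition \ref{S0Def} and with $Q_v(0)=\cS_0(v)$; so from now on I take $\tau\in(0,1]$.

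\emph{Passing to the limit.} Theorem \ref{mu} gives $\mu_{v,k}\rightharpoonup\mu_v$; since the $\mu_{v,k}$ are probability measures carried by the fixed compact $[0,\cS_0(v)]$, their ccdf's converge, $\mu_{v,k}([t,\infty))\to F_v(t)$, at every continuity point of $F_v$, and $F_v$ is continuous on all of $[0,\cS_0(v)]$ because $\mu_v$ is absolutely continuous on $[0,\cS_0(v))$ (Theorem \ref{mu}). The standard principle that convergence of non-increasing distribution functions entails convergence of their generalized inverses at continuity points of the limit then yields $g_k(s)\to Q_v(s)$ for all but countably many $s\in(0,1]$. Combined with $0\le g_k\le\cS_0(v)$, $m_k/d_k\to\tau>0$, and dominated convergence, this gives
\[
\lim_{k\to\infty}S_{k,m_k}(v)=\frac1\tau\int_0^\tau Q_v(s)\,ds=:\cS_\tau(v),
\]
so the limit exists and depends only on $v$ and $\tau$ -- which is both the existence and the independence statement.

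\emph{Barycenter identity and monotonicity.} It remains to recognize $\tfrac1\tau\int_0^\tau Q_v$ as the barycenter $b\bigl(\boldsymbol1_{[Q_v(\tau),\cS_0(v)]}\mu_v\bigr)$ and to prove monotonicity. Since $F_v$ is continuous, non-increasing, and equals $1$ at $0$, one has the clean equivalence $Q_v(s)\ge t\iff F_v(t)\ge s$ for all $s\in(0,1]$ and $t\in[0,\cS_0(v)]$. Writing $Q_v(s)=\int_0^{\cS_0(v)}\boldsymbol1[t\le Q_v(s)]\,dt$ and applying Tonelli, this equivalence turns $\int_0^\tau Q_v(s)\,ds$ into $\int_0^{\cS_0(v)}\min\{\tau,F_v(t)\}\,dt=\tau Q_v(\tau)+\int_{Q_v(\tau)}^{\cS_0(v)}F_v(t)\,dt$ (using $F_v(t)\ge\tau\iff t\le Q_v(\tau)$). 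The same layer-cake computation applied directly to $\int_{[Q_v(\tau),\cS_0(v)]}t\,d\mu_v(t)$ yields $F_v(Q_v(\tau))\,Q_v(\tau)+\int_{Q_v(\tau)}^{\cS_0(v)}F_v(t)\,dt$; as $\mu_v([Q_v(\tau),\cS_0(v)])=F_v(Q_v(\tau))\ge\tau>0$, dividing each by its total mass shows the two quotients agree -- they coincide whether or not $F_v(Q_v(\tau))=\tau$, because in the exceptional case $F_v(Q_v(\tau))>\tau$ one necessarily has $Q_v(\tau)=\cS_0(v)$ and both sides equal $\cS_0(v)$. Finally, for $0\le\tau\le\tau'\le1$ pick $m_k\le m_k'$ in $\{1,\dots,d_k\}$ with $m_k/d_k\to\tau$ and $m_k'/d_k\to\tau'$; then \eqref{m monotonicity} gives $S_{k,m_k}(v)\ge S_{k,m_k'}(v)$, and letting $k\to\infty$ yields $\cS_\tau(v)\ge\cS_{\tau'}(v)$ (this also follows directly from the displayed formula, the average of the top $\tau$-fraction of a fixed distribution being non-increasing in $\tau$).

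\emph{Main difficulty.} The delicate point is the passage from the weak convergence $\mu_{v,k}\rightharpoonup\mu_v$ to the convergence of the \emph{truncated} means: one must control the generalized inverses $g_k$ well enough near the cut-off $s=\tau$, and in particular deal with the possible atom of $\mu_v$ at $\cS_0(v)$ (the collapsing regime), which is exactly where the barycenter identity and the endpoint $\tau=0$ call for separate bookkeeping.
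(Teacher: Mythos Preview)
Your proof is correct and takes a somewhat different route from the paper's. The paper argues by an $\varepsilon$-squeeze: for $\tau\in(0,1)$ it shows (via Lemma~\ref{VvvolumestrictLem} and the definition of $Q_v(\tau)$) that $\#\iota_{v,k}(Q_v(\tau)-\varepsilon)\ge m_k\ge\#\iota_{v,k}(Q_v(\tau)+\varepsilon)$ for large $k$, uses \eqref{m monotonicity} and \eqref{iota} to sandwich $S_{k,m_k}(v)$ between barycenters of $\mu_{v,k}$ restricted to $[Q_v(\tau)\pm\varepsilon,\cS_0(v)]$, passes to the limit via Theorem~\ref{mu}, and then lets $\varepsilon\to0$; it also splits into the cases $\tau\le\mu_v(\{\cS_0(v)\})$ and $\tau>\mu_v(\{\cS_0(v)\})$, handling $\tau=1$ separately via Lemma~\ref{tau=max}. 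Your approach instead recognizes $g_k$ as the (upper) quantile function of $\mu_{v,k}$, writes $S_{k,m_k}(v)=\frac{d_k}{m_k}\int_0^{m_k/d_k}g_k$, and invokes the standard fact that weak convergence of probability measures on a compact interval implies a.e.\ convergence of quantile functions, then applies dominated convergence. This treats all $\tau\in(0,1]$ uniformly and avoids the explicit $\varepsilon$-bookkeeping and case split; the trade-off is that the paper's argument makes the r\^ole of the possible atom at $\cS_0(v)$ more visible (which is relevant later for the collapsing analysis), whereas yours absorbs it into the measure-zero exceptional set of the quantile convergence. Your layer-cake derivation of the barycenter identity, including the observation that $F_v(Q_v(\tau))>\tau$ forces $Q_v(\tau)=\cS_0(v)$ by continuity of $F_v$, is a clean replacement for the paper's direct computation in \eqref{S_infty}.
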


The key for the proof of Proposition \ref{S_tau def} is to associate quantum analogues of the ccdf and tail distribution to $v$
and prove some asymptotic estimates for them that guarantee that the quantum barycenters converge to the classical ones.

\begin{definition}\label{iota defn}
    Let $v\in \ValXfin$,  $k\in\NN(L)$ and $t\in[0,S_{k,1}(v)]$. 
    The {\it $k$-th quantum ccdf of $v$}~is~(recall~\eqref{muvkEq})
$$
F_{v,k}(t):=\mu_{v,k}\big([t,S_{k,1}(v)]\big)\in\{j/d_k\}_{j=0}^{d_k}, \q t\in[0,S_{k,1}(v)].
$$
The {\it $k$-th quantum quantile function of $v$} is its inverse
$$
Q_{v,k}(\tau):= \sup F_{v,k}^{-1}\left(\lfloor \tau d_k\rfloor/d_k\right), \q \hbox{\ on } [0,1].
$$
The {\it $k$-th quantum $\tau$-tail distribution associated to $v$} is
$\boldsymbol{1}_{[Q_{v,k}(\tau),S_{k,1}(v)]}\mu_{v,k}$.
\end{definition}
Note that 
by Theorem \ref{mu} and Definition \ref{ccdfDef},
$
\lim_kF_{v,k}=F_v,
$
and hence also
$
\lim_kQ_{v,k}=Q_v.
$
In fact, there is a useful expression for the quantum ccdf:

\blem
    Let $v\in \ValXfin$,  $k\in\NN(L)$ and $t\in[0,S_{k,1}(v)]$. 
Then, $\disp F_{v,k}(t)=\frac1{d_k}\dim\cF_v^{kt}R_k$.
\elem

Set
\beq\lb{iotavkEq}
        \iota_{v,k}\left(t\right):=\left\{1\leq\ell\leq d_k\,:\,j_{k,\ell}\left(v\right)\geq kt\right\}.
\eeq

\bpf
First, we record a useful expression for $\iota_{v,k}$ \eqref{iotavkEq}.
By \eqref{jumping number}, $j_{k,\ell}(v)\geq kt$ if and only if $\dim\cF_v^{kt}R_k\geq\ell$. Therefore (recall \eqref{iotavkEq}),
\begin{align}
    \iota_{v,k}\left(t\right)&=\left\{1\leq\ell\leq d_k\,:\,j_{k,\ell}\left(v\right)\geq kt\right\}
    =\left\{1\leq\ell\leq d_k\,:\,\dim\cF_v^{kt}R_k\geq\ell\right\}\nonumber\\
    &=\left\{1,\ldots,\dim\cF_v^{kt}R_k\right\}.
    \label{iota def}
\end{align}
Now, note that by Definition \ref{ccdfDef} and \eqref{iotavkEq},
    \beq\lb{FvkiotaEq}
    F_{v,k}=\frac1{d_k}\max\iota_{v,k}=\frac1{d_k}\#\left(\iota_{v,k}\right),
    \eeq
and the lemma follows.
\epf

Towards the proof of Proposition \ref{S_tau def} we prove first a 
quantum analogue:

\begin{lemma}
    For $k\in\NN(L), m_k\in d_kF_{v,k}([0,S_{k,1}(v)])$, 
    $
S_{k,m_k}\left(v\right)
        =b\mskip1mu
        \left( \boldsymbol{1}_{\left[Q_{v,k}(m_k/d_k),S_{k,1}\left(v\right)\right]}\mu_{v,k}\right).
$
\end{lemma}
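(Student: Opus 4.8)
The plan is to reduce the statement to an explicit identity about the ordered jumping numbers $j_{k,1}(v)\ge\cdots\ge j_{k,d_k}(v)$. Concretely, I would show that the restricted empirical measure on the right-hand side equals $\tfrac1{d_k}\sum_{\ell=1}^{m_k}\delta_{j_{k,\ell}(v)/k}$ (atoms counted with multiplicity), whose total mass is $m_k/d_k$ and whose first moment is $\tfrac1{kd_k}\sum_{\ell=1}^{m_k}j_{k,\ell}(v)$; dividing then gives $S_{k,m_k}(v)$ by Definition \ref{SkmDef}. So the whole content is to compute $Q_{v,k}(m_k/d_k)$ correctly and to identify which atoms of $\mu_{v,k}$ \eqref{muvkEq} fall into $[Q_{v,k}(m_k/d_k),S_{k,1}(v)]$.

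First I would record the bookkeeping: $S_{k,1}(v)=j_{k,1}(v)/k$ is the largest point of $\operatorname{supp}\mu_{v,k}$, and, by \eqref{FvkiotaEq} together with \eqref{iota def} and \eqref{jumping number}, for $t\in[0,S_{k,1}(v)]$ one has $d_kF_{v,k}(t)=\#\{\ell:j_{k,\ell}(v)\ge kt\}$. Writing the distinct values among the $j_{k,\ell}(v)$ as $\lambda_1>\cdots>\lambda_r\ge0$ with multiplicities $n_1,\dots,n_r$, this is a non-increasing, left-continuous step function of $t$ taking, on $[0,S_{k,1}(v)]$, exactly the partial-sum values $n_1,\,n_1+n_2,\,\dots,\,d_k$. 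The crucial (and only) use of the hypothesis $m_k\in d_kF_{v,k}([0,S_{k,1}(v)])$ is that it says precisely that $m_k$ is one of these partial sums, equivalently that $j_{k,m_k}(v)>j_{k,m_k+1}(v)$ --- no tie among jumping numbers straddles the level $m_k$ (this condition being vacuous when $m_k=d_k$). I expect this small argument about ties to be the only genuine obstacle; everything around it is formal.

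Granting that, $F_{v,k}^{-1}(m_k/d_k)=\{t:j_{k,m_k+1}(v)<kt\le j_{k,m_k}(v)\}$ when $m_k<d_k$ and $=[0,j_{k,d_k}(v)/k]$ when $m_k=d_k$, so in both cases, using $\lfloor(m_k/d_k)d_k\rfloor=m_k$ in Definition \ref{iota defn},
$$Q_{v,k}(m_k/d_k)=\sup F_{v,k}^{-1}(m_k/d_k)=\tfrac1k\,j_{k,m_k}(v).$$
Finally I would identify the atoms of $\mu_{v,k}$ lying in $[\,j_{k,m_k}(v)/k,\;j_{k,1}(v)/k\,]$: these are the $j_{k,\ell}(v)/k$ with $j_{k,\ell}(v)\ge j_{k,m_k}(v)$, and by the ordering of the jumping numbers together with $j_{k,m_k}(v)>j_{k,m_k+1}(v)$ this index set is exactly $\{1,\dots,m_k\}$ (with multiplicity). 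Hence $\boldsymbol{1}_{[Q_{v,k}(m_k/d_k),S_{k,1}(v)]}\mu_{v,k}=\tfrac1{d_k}\sum_{\ell=1}^{m_k}\delta_{j_{k,\ell}(v)/k}$, and its barycenter is
$$\frac{\tfrac1{kd_k}\sum_{\ell=1}^{m_k}j_{k,\ell}(v)}{m_k/d_k}=\frac1{km_k}\sum_{\ell=1}^{m_k}j_{k,\ell}(v)=S_{k,m_k}(v),$$
which is the asserted identity.
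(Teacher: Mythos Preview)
Your proof is correct and follows essentially the same approach as the paper's: both arguments identify the restricted measure $\boldsymbol{1}_{[Q_{v,k}(m_k/d_k),S_{k,1}(v)]}\mu_{v,k}$ with $\tfrac1{d_k}\sum_{\ell=1}^{m_k}\delta_{j_{k,\ell}(v)/k}$ and then divide first moment by total mass. The only difference is packaging: the paper reparametrizes by a threshold $s\in[0,S_{k,1}(v)]$ (setting $m_k=\#\iota_{v,k}(s)$ via \eqref{FvkiotaEq}) and proves the identity for each $s$, leaving the match with $Q_{v,k}(m_k/d_k)$ implicit, whereas you compute $Q_{v,k}(m_k/d_k)=j_{k,m_k}(v)/k$ directly and make explicit that the hypothesis $m_k\in d_kF_{v,k}([0,S_{k,1}(v)])$ is exactly the no-tie condition $j_{k,m_k}(v)>j_{k,m_k+1}(v)$ needed to get the index set $\{1,\dots,m_k\}$ on the nose.
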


\begin{proof}
    By \eqref{FvkiotaEq}, 
    the statement is equivalent to showing that
\beq\lb{SkmccdfequivEq}
        S_{k,\#\left(\iota_{v,k}\left(s\right)\right)}\left(v\right)
        =b\mskip1mu
        \left( \boldsymbol{1}_{\left[s,\cS_0\left(v\right)\right]}\mu_{v,k}\right),
     \q \hbox{\ for    $s\in[0,S_{k,1}(v)]\subset [0,\cS_0(v)]$}.
    \eeq
   By
    \eqref{muvkEq} and \eqref{iotavkEq},
    $$
        \int_{\left[s,\cS_0\left(v\right)\right]}td\mu_{v,k}=\frac{1}{kd_k}\sum_{\ell\in\iota_{v,k}\left(s\right)}j_{k,\ell}\left(v\right),
    $$
    and by \eqref{FvkiotaEq}
    $$
        \int_{\left[s,\cS_0\left(v\right)\right]}d\mu_{v,k}=
        F_{v,k}(s)=\frac{1}{d_k}\#\left(\iota_{v,k}\left(s\right)\right).
    $$
Thus,
    \begin{equation}\label{iota}
        S_{k,\#\left(\iota_{v,k}\left(s\right)\right)}\left(v\right)=\frac{1}{k\#\left(\iota_{v,k}\left(s\right)\right)}\sum_{\ell\in\iota_{v,k}\left(s\right)}j_{k,\ell}\left(v\right)=\frac{\int_{\left[s,\cS_0\left(v\right)\right]}td\mu_{v,k}}{\int_{\left[s,\cS_0\left(v\right)\right]}d\mu_{v,k}},
    \end{equation}
    proving \eqref{SkmccdfequivEq}.
\end{proof}

\subsection{General double limits: tail expectations}
\label{general double}

We are now in position to prove Proposition \ref{S_tau def}.

\begin{proof}[Proof of Proposition \ref{S_tau def}]
    When $\tau=0,1$, this is Lemma \ref{tau=max} and Proposition \ref{Strong version}.
Thus, suppose $m_k\in\{1,\ldots,d_k\}$ for $k\in\NN(L)$,
    and
    $
        \lim_{k\to\infty}\frac{m_k}{d_k}=\tau\in\left(0,1\right).
    $
Note that (recall \eqref{muvEq} and Definition \ref{ccdfDef})
\begin{align}
        Q_v(\tau)&=\sup\left\{t\leq\cS_0\left(v\right)\,:\,\mu_v\left(\left[t,\cS_0\left(v\right)\right]\right)\geq\tau\right\}\label{t_0 def'}\\
        &=\sup\left\{t\leq\cS_0\left(v\right)\,:\,\Vol V_{v,\bullet}^t\geq\tau\VolL\right\}\in\left(0,\cS_0\left(v\right)\right].\label{t_0 def}
    \end{align}

    \begin{lemma}
    \lb{VvvolumestrictLem}
        For $\tau\in[0,1)$ and $t\in[0,Q_v(\tau))$, $\Vol V_{v,\bullet}^t>\tau\VolL$.
    \end{lemma}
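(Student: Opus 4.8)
The plan is to unwind the definition of $Q_v(\tau)$ recorded in \eqref{t_0 def}, namely that $Q_v(\tau)$ is the supremum of the super-level set $\{s\le\cS_0(v):\Vol V_{v,\bullet}^s\ge\tau\VolL\}$, and then to upgrade the non-strict inequality this produces to a strict one using the strict-monotonicity statement of Claim \ref{conv lem 2}. The two facts driving the argument are: (a) $s\mapsto\Vol V_{v,\bullet}^s$ is non-increasing, so the above super-level set is downward closed and therefore contains the whole interval $[0,Q_v(\tau))$; and (b) once $\Vol V_{v,\bullet}^{(\cdot)}$ drops strictly below $\VolL$ it is strictly decreasing on $[0,\cS_0(v))$, by Claim \ref{conv lem 2}.

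Concretely, I would first note that since $t<Q_v(\tau)\le\cS_0(v)$, we have $t\in[0,\cS_0(v))$ and the open interval $(t,Q_v(\tau))$ is nonempty; fix $s_0\in(t,Q_v(\tau))$. By downward closedness, $[0,Q_v(\tau))$ lies in the super-level set, so both $t$ and $s_0$ belong to it; in particular $\Vol V_{v,\bullet}^{s_0}\ge\tau\VolL$. Note also $s_0<Q_v(\tau)\le\cS_0(v)$, so Claim \ref{conv lem 2} is applicable at the parameter $s_0$.

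Next, since $\Delta_v^t\subseteq\Delta=\Delta_v^0$, equation \eqref{LM} gives $\Vol V_{v,\bullet}^t=n!\,|\Delta_v^t|\le n!\,|\Delta|=\VolL$, so exactly one of two cases occurs. If $\Vol V_{v,\bullet}^t=\VolL$, then since $\tau<1$ and $\VolL>0$ (because $L$ is big), already $\Vol V_{v,\bullet}^t=\VolL>\tau\VolL$. If instead $\Vol V_{v,\bullet}^t<\VolL$, apply Claim \ref{conv lem 2} with its ``$s$'' taken to be $t$ and its ``$t$'' taken to be $s_0\in(t,\cS_0(v))$, obtaining $\Vol V_{v,\bullet}^{s_0}<\Vol V_{v,\bullet}^t$; combining with $\Vol V_{v,\bullet}^{s_0}\ge\tau\VolL$ yields the strict inequality $\Vol V_{v,\bullet}^t>\tau\VolL$. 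This argument also covers the endpoint $\tau=0$, where $Q_v(\tau)=\cS_0(v)$.

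The proof is short, and there is no real obstacle; the only point needing a little care is confirming that the auxiliary parameter $s_0$ genuinely lies in the super-level set and is strictly below $\cS_0(v)$ — both of which are exactly what make Claim \ref{conv lem 2} applicable — and this is guaranteed by the chain $s_0<Q_v(\tau)\le\cS_0(v)$ together with downward closedness of the super-level set, which in turn comes from monotonicity of $s\mapsto\Vol V_{v,\bullet}^s$.
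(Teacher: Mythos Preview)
Your proof is correct and follows essentially the same approach as the paper: both pick an intermediate point in $(t,Q_v(\tau))$ and invoke Claim~\ref{conv lem 2} together with the definition \eqref{t_0 def} of $Q_v(\tau)$. The paper frames it as a contradiction (assuming $\Vol V_{v,\bullet}^t\le\tau\VolL<\VolL$, then applying Claim~\ref{conv lem 2} to force $\Vol V_{v,\bullet}^s<\tau\VolL$ at the intermediate $s$, contradicting $s<Q_v(\tau)$), whereas you give the direct version with a case split on whether $\Vol V_{v,\bullet}^t=\VolL$; the logical content is identical.
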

    \begin{proof}
        Fix $\tau\in[0,1)$ and $t\in[0,Q_v(\tau))$, and suppose
    \beq\lb{VolVtVolEq}
    \Vol V_{v,\bullet}^t\leq\tau\VolL.
    \eeq
        Pick
        \begin{equation}\label{contradiction}
            s\in\left(t,Q_v\left(\tau\right)\right).
        \end{equation}
By Claim \ref{conv lem 2} (whose assumptions are fulfilled by \eqref{VolVtVolEq}
as $\tau<1$),
        $
            \Vol V_{v,\bullet}^s<\Vol V_{v,\bullet}^t\leq\tau\VolL.
        $
        By \eqref{t_0 def}, $Q_v(\tau)\leq s$, contradicting \eqref{contradiction}.
    \end{proof}
    By \eqref{VolGradedEq} and Lemma \ref{VvvolumestrictLem},
    $$\baligned
        \lim_{k\to\infty}\frac{1}{k^n}\dim\cF_v^{k\left(Q_v\left(\tau\right)-\varepsilon\right)}R_k
        =\frac{1}{n!}\Vol V_{v,\bullet}^{Q_v\left(\tau\right)-\varepsilon}
        >\frac{\tau{\VolL}}{n!}
=\frac{\VolL}{n!}\lim_{k\to\infty}\frac{m_k}{d_k}=\lim_{k\to\infty}\frac{m_k}{k^n}
        ,\q \hbox{for any $\varepsilon\in(0,Q_v(\tau)]$}.
    \ealigned
    $$
    Therefore, 
    $
        m_k\leq\dim\cF_v^{k\left(Q_v\left(\tau\right)-\varepsilon\right)}R_k,\q
        \hbox{\ for sufficiently large $k$}.
    $
By \eqref{iota def},
    \begin{align*}
        \#\Big(\iota_{v,k}\big(Q_v(\tau)-\varepsilon\big)\Big)&=\#\left\{\ell\in\NN\,:\,1\leq\ell\leq\dim\cF_v^{k\left(Q_v\left(\tau\right)-\varepsilon\right)}R_k\right\}\\
        &=\dim\cF_v^{k\left(Q_v\left(\tau\right)-\varepsilon\right)}R_k
        \geq m_k.
    \end{align*}
    By \eqref{m monotonicity}, \eqref{iota}, and Theorem \ref{mu},
    \begin{align*}
        \liminf_{k\to\infty}S_{k,m_k}\left(v\right)&\geq\liminf_{k\to\infty}S_{k,\#\left(\iota_{v,k}\left(Q_v\left(\tau\right)-\varepsilon\right)\right)}\left(v\right)\\
        &=\liminf_{k\to\infty}\frac{\int_{\left[Q_v\left(\tau\right)-\varepsilon,\cS_0\left(v\right)\right]}td\mu_{v,k}\left(t\right)}{\int_{\left[Q_v\left(\tau\right)-\varepsilon,\cS_0\left(v\right)\right]}d\mu_{v,k}\left(t\right)}
        =\frac{\int_{\left[Q_v\left(\tau\right)-\varepsilon,\cS_0\left(v\right)\right]}td\mu_v\left(t\right)}{\int_{\left[Q_v\left(\tau\right)-\varepsilon,\cS_0\left(v\right)\right]}d\mu_v\left(t\right)}.
    \end{align*}
    Since $\varepsilon\in(0,Q_v(\tau)]$ is arbitrary,
    \begin{equation}\label{S_k,m_k}
        \liminf_{k\to\infty}S_{k,m_k}\left(v\right)\geq\frac{\int_{\left[Q_v\left(\tau\right),\cS_0\left(v\right)\right]}td\mu_v\left(t\right)}{\int_{\left[Q_v\left(\tau\right),\cS_0\left(v\right)\right]}d\mu_v\left(t\right)}.
    \end{equation}
    
    Now, there are two cases to consider. \newline
    First, suppose $\tau\leq\mu_v(\{\cS_0(v)\})$. By \eqref{t_0 def'}, $Q_v(\tau)=\cS_0(v)$. By \eqref{S_k,m_k} (and as $\tau>0$),
    $$
        \liminf_{k\to\infty}S_{k,m_k}\left(v\right)\geq\frac{\int_{\left\{\cS_0\left(v\right)\right\}}td\mu_v\left(t\right)}{\int_{\left\{\cS_0\left(v\right)\right\}}d\mu_v\left(t\right)}=\cS_0\left(v\right).
    $$
    By \eqref{def T} and \eqref{m monotonicity},
    $
        S_{k,m_k}(v)\leq S_{k,1}(v)\leq\cS_0(v).
    $
    Altogether,
    $
        \lim_{k\to\infty}S_{k,m_k}\left(v\right)=\cS_0\left(v\right).
    $
    \newline
    Second, suppose $\tau>\mu_v(\{\cS_0(v)\})$. By \eqref{mu charge}, there exists $s<\cS_0(v)$ such that
    $$
        \Vol V_{v,\bullet}^{s}<\tau\VolL.
    $$
    By \eqref{t_0 def}, $Q_v(\tau)\leq s<\cS_0(v)$. Again by \eqref{t_0 def}, for $\varepsilon\in(0,\cS_0(v)-Q_v(\tau))$, $\Vol V_{v,\bullet}^{Q_v(\tau)+\varepsilon}<\tau\VolL$, so,
    $$
        \lim_{k\to\infty}\frac{1}{d_k}\dim\cF_v^{k\left(Q_v\left(\tau\right)+\varepsilon\right)}R_k=\frac{\Vol V_{v,\bullet}^{Q_v\left(\tau\right)+\varepsilon}}{\Vol L}<{\tau}=\lim_{k\to\infty}\frac{m_k}{d_k}.
    $$
    Therefore for sufficiently large $k$,
    $
        \dim\cF_v^{k\left(Q_v\left(\tau\right)+\varepsilon\right)}R_k\le m_k.
    $
    By \eqref{iota def},
    \begin{align*}
        \#\Big(\iota_{v,k}\big(Q_v\left(\tau\right)+\varepsilon\big)\Big)
        =\dim\cF_v^{k\left(Q_v\left(\tau\right)+\varepsilon\right)}R_k
        \leq m_k.
    \end{align*}
    By \eqref{m monotonicity} and \eqref{iota},
    \begin{align*}
        \limsup_{k\to\infty}S_{k,m_k}\left(v\right)&\leq\limsup_{k\to\infty}S_{k,\#\left(\iota_{v,k}\left(Q_v\left(\tau\right)+\varepsilon\right)\right)}\left(v\right)\\
        &=\limsup_{k\to\infty}\frac{\int_{\left[Q_v\left(\tau\right)+\varepsilon,\cS_0\left(v\right)\right]}td\mu_{v,k}\left(t\right)}{\int_{\left[Q_v\left(\tau\right)+\varepsilon,\cS_0\left(v\right)\right]}d\mu_{v,k}\left(t\right)}
        =\frac{\int_{\left[Q_v\left(\tau\right)+\varepsilon,\cS_0\left(v\right)\right]}td\mu_v\left(t\right)}{\int_{\left[Q_v\left(\tau\right)+\varepsilon,\cS_0\left(v\right)\right]}d\mu_v\left(t\right)}.
    \end{align*}
    Since $\varepsilon$ is arbitrary,
    $$
        \limsup_{k\to\infty}S_{k,m_k}\left(v\right)\leq\frac{\int_{\left[Q_v\left(\tau\right),\cS_0\left(v\right)\right]}td\mu_v\left(t\right)}{\int_{\left[Q_v\left(\tau\right),\cS_0\left(v\right)\right]}d\mu_v\left(t\right)}.
    $$
    Combining with \eqref{S_k,m_k},
    $$
        \lim_{k\to\infty}S_{k,m_k}\left(v\right)=\frac{\int_{\left[Q_v\left(\tau\right),\cS_0\left(v\right)\right]}td\mu_v\left(t\right)}{\int_{\left[Q_v\left(\tau\right),\cS_0\left(v\right)\right]}d\mu_v\left(t\right)}
    $$
    also in this case.
Finally, monotonicity of $\cS_\tau$ follows from \eqref{m monotonicity},
concluding the~proof~of~Proposition~\ref{S_tau def}.
\end{proof}

\begin{remark}
Recall \eqref{GvDef}. Proposition \ref{S_tau def} can be rephrased as
    \begin{equation}\label{S_infty}
        \cS_\tau\left(v\right)=\frac{\int_{\left[Q_v\left(\tau\right),\cS_0\left(v\right)\right]}td\mu_v\left(t\right)}{\int_{\left[Q_v\left(\tau\right),\cS_0\left(v\right)\right]}d\mu_v\left(t\right)}=\frac{\int_{\Delta_v^{Q_v\left(\tau\right)}}G_v\left(x\right)dx}{\int_{\Delta_v^{Q_v\left(\tau\right)}}dx},
    \end{equation}
    i.e., $\cS_\tau(v)$ is
    the average value of $G_v$ on $\Delta_v^{Q_v\left(\tau\right)}$.
\end{remark}

\subsection{Uniform estimates on the Ehrhart function in convex sub-bodies}
\label{lattice point estimates}
In this subsection we prove estimates on lattice points in a convex body $K$.
Specifically, we are interested to show estimates that
hold {\it uniformly  for any convex body inside $K$ and simultaneously
for any integer dilation of $K$}.
Lemma \ref{lattice upper bound}
can be thought of as a one-sided generalized Euler--Maclaurin type formula
(a typical Euler-Maclaurin formula applies to a {\it fixed} body
\cite{KSW,PS}).
Both Lemma \ref{lattice upper bound}
and Proposition \ref{lattice lower bound} can be thought of 
as {\it uniform} versions of estimates on the 
{\it discrepancy} (also called {\it rest}) or 
{\it Ehrhart function}. These estimates seem to be new even in the case
of convex lattice polytopes, though in that case one can 
derive much more refined estimates using Ehrhart theory \cite{JR2}.

The main result of this subsection is:

\bthm\lb{MainEMacThm}
 For any convex body $K\subset\RR^n$ and $\nu\in(0,|K|]$, there exists $C(\nu,K)>0$ such that for any convex body $P\subset K$ with $|P|\geq \nu$,
    $$
        \Big|\#\left(P\cap\ZZ^n/k\right)
        -\left|P\right|k^n\Big|\le Ck^{n-1}, 
        \q\hbox{for all $k\in\NN$}.
    $$
\ethm

Theorem \ref{MainEMacThm} follows from Lemma \ref{lattice upper bound}
and Proposition \ref{lattice lower bound}. Note the rate $O(k^{n-1})$ is optimal
(cf. \cite[Proposition 38]{BL18} where a non-uniform version is obtained 
with rate $o(k^n)$).

\begin{lemma}\label{lattice upper bound}
    For a convex body $K\subset\RR^n$, there exists $C=C(K)>0$ such that for any non-negative concave function $G$ defined on a convex body $P\subset K$ and $k\in\NN$,
    $$
        \frac{1}{k^n}\sum_{x\in P\cap\ZZ^n/k}G\left(x\right)\leq\int_PG\left(x\right)dx+\frac{C}{k}\sup G.
    $$
    In particular,
    $
        \#\left(P\cap\ZZ^n/k\right)\leq\left|P\right|k^n+Ck^{n-1}.
    $
\end{lemma}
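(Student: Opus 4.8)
The plan is to reduce to a one-dimensional estimate by slicing $P$ one coordinate at a time. Two facts make this work \emph{uniformly} in $P$: the restriction of a concave function to the intersection of its domain with a line is again concave, and every coordinate projection of $P$ lies inside the corresponding projection of the \emph{fixed} body $K$, which is bounded; so the only place where $P$ is replaced by $K$ is in the crude count of the error term.

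\emph{One-dimensional case.} Let $G\ge 0$ be concave on an interval $I=[a,b]$ and list the points of $I\cap\ZZ/k$ as $x_0<\cdots<x_m$ (consecutive, spacing $1/k$). For $1\le i\le m$, concavity (i.e. the chord lies below the graph, the Hermite--Hadamard inequality) gives $\int_{x_{i-1}}^{x_i}G\ge \tfrac1{2k}\bigl(G(x_{i-1})+G(x_i)\bigr)$; summing over $i$ and using $G\ge 0$ on $[a,b]\setminus[x_0,x_m]$ yields $\tfrac1k\sum_{i=0}^m G(x_i)\le \int_I G+\tfrac1k\sup_I G$. The cases $m\le 0$ are immediate from $G\ge 0$.

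\emph{Iteration.} Fix $k$ and, for $0\le j\le n$, put $\Phi_j:=\tfrac1{k^{j}}\sum_{(x_1,\dots,x_j)\in(\ZZ/k)^j}\int_{\RR^{n-j}} G\,\mathbf 1_P\,dx_{j+1}\cdots dx_n$ (a concave function on a bounded convex body is bounded above, so $\sup_P G<\infty$ and everything is finite), so that $\Phi_0=\int_P G$ and $\Phi_n=\tfrac1{k^n}\sum_{x\in P\cap\ZZ^n/k}G(x)$. The difference $\Phi_j-\Phi_{j-1}$ only exchanges a sum for an integral in the $j$-th variable; for each fixed choice of the remaining $n-1$ coordinates the $j$-th slice of $P$ is an interval $I$ on which $G$ is concave, so the one-dimensional case bounds the corresponding bracket by $\tfrac1k\sup_P G$, and the bracket is $0$ when the slice is empty. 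Hence
\[
\Phi_j-\Phi_{j-1}\ \le\ \frac{\sup_P G}{k}\cdot\frac1{k^{j-1}}\sum_{(x_1,\dots,x_{j-1})\in(\ZZ/k)^{j-1}}\bigl|\{(x_{j+1},\dots,x_n):\ \text{$j$-th slice of $P$ nonempty}\}\bigr|.
\]
The set on the right is the $(n-j)$-dimensional slice of the projection $\pi(P)$ of $P$ forgetting the $j$-th coordinate, obtained by fixing its first $j-1$ coordinates; since $\pi(P)\subset\pi(K)\subset[-R,R]^{n-1}$ for some $R=R(K)$ with $K\subset[-R,R]^n$, this slice has volume $\le (2R)^{n-j}$ and is nonempty for at most $(2Rk+1)^{j-1}$ of the lattice points $(x_1,\dots,x_{j-1})$. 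Therefore $\Phi_j-\Phi_{j-1}\le \tfrac{\sup_P G}{k}(2R+1)^{j-1}(2R)^{n-j}$, and telescoping over $j=1,\dots,n$ gives $\Phi_n\le\Phi_0+\tfrac{C}{k}\sup_P G$ with $C=C(K):=n(2R+1)^{n-1}$. Taking $G\equiv 1$ (so $\sup G=1$, $\int_P G=|P|$) gives $\#(P\cap\ZZ^n/k)\le |P|k^n+Ck^{n-1}$.

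The only real issue is this uniformity: one cannot partition the varying body $P$ by a fixed grid, but one never needs to — the main term $\int_P G$ is kept exact, and bounding the boundary error by volumes and lattice-point counts inside the \emph{fixed} $K$ costs nothing. I do not expect any further obstacle; the remaining points (measurability, the at-worst-downward jumps of a concave function on $\partial P$) are routine.
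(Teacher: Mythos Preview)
Your argument is correct and genuinely different from the paper's. The paper lifts to dimension $n+1$: after normalizing $\sup G=1$ it observes that
\[
\frac{1}{k^n}\sum_{x\in P\cap\ZZ^n/k}G(x)=\Bigl|\Bigl(\bigcup_{x}\{x\}\times[0,G(x)]\Bigr)+\hat Q/k\Bigr|
\]
with $\hat Q=[-\tfrac12,\tfrac12]^n\times\{0\}$, notes that the column set is contained in the convex subgraph $P_G=\{(x,y):x\in P,\ 0\le y\le G(x)\}\subset K\times[0,1]$, and then expands $|P_G+\hat Q/k|$ as a Steiner-type polynomial in $1/k$ whose mixed-volume coefficients are bounded, by monotonicity, by those of $K\times[0,1]$. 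Your route instead iterates the one-variable trapezoid (Hermite--Hadamard) inequality via Fubini, telescoping $\Phi_0,\dots,\Phi_n$ and bounding each increment by the $(n-1)$-dimensional measure of a projection of $P$, which you then dominate by the projection of $K$. Both approaches exploit concavity of $G$ and convexity of $P$ exactly once and obtain uniformity by replacing $P$ with $K$ only in the error term; your version is more elementary (no mixed volumes) and yields the explicit constant $C=n(2R+1)^{n-1}$ with $K\subset[-R,R]^n$, while the paper's geometric packaging makes the structure of the error (a polynomial in $1/k$ with mixed-volume coefficients) more transparent.
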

\begin{proof}
    We may rescale and assume $\sup G=1$. Notice that
    \begin{equation}\label{cylinder}
        \frac{1}{k^n}\sum_{x\in P\cap\ZZ^n/k}G\left(x\right)
        =\left|\left(\bigcup_{x\in P\cap\ZZ^n/k}\left\{x\right\}\times\left[0,G\left(x\right)\right]\right)+\hat Q/k\right|,
    \end{equation}
    where
    \beq\lb{hatQEq}
        \hat Q=\left[-\frac{1}{2},\frac{1}{2}\right]^n\times\left\{0\right\}\subset\RR^{n+1}.
    \eeq
    Define
    \beq\lb{PGEq}
        P_G:=\left\{\left(x,y\right)\in P\times\left[0,1\right]\,:\,0\leq y\leq G\left(x\right)\right\}\subset K\times[0,1].
    \eeq
Then,
\begin{equation}\label{graph inclusion}
        \bigcup_{x\in P\cap\ZZ^n/k}\left\{x\right\}\times\left[0,G\left(x\right)\right]\subset P_G.
    \end{equation}
    Combining \eqref{cylinder}--\eqref{graph inclusion}, 
    and expanding the volume of a Minkowski sum of two convex bodies
    (note $P_G$ is convex since $G$ is concave),
    \begin{align*}
        \frac{1}{k^n}\sum_{x\in P\cap\ZZ^n/k}G\left(x\right)&\leq\left|P_G+\hat Q/k\right|\\
        &=\left|P_G\right|+\sum_{i=1}^n\binom{n+1}{i}V\left(P_G,n+1-i;\hat Q,i\right)k^{-i}\\
        &\leq\int_PG\left(x\right)dx+\sum_{i=1}^n\binom{n+1}{i}V\left(K\times\left[0,1\right],n+1-i;\hat Q,i\right)k^{-i},
    \end{align*}
    where $V(A,k;B,\ell)$ denotes the mixed volume $V(A,\ldots,A,B,\ldots,B)$ where $A$ appears $k$ times and $B$ appears $\ell$ times \cite[Theorem 5.1.7]{schneider}. This completes the proof of Lemma \ref{lattice upper bound}.
\end{proof}

\begin{prop}\label{lattice lower bound}
    For a convex body $K\subset\RR^n$ and $\nu\in(0,|K|]$, there exists $C=C(\nu,K)>0$ such that for any convex body $P\subset K$ with $|P|\geq \nu$,
    $$
        \#\left(P\cap\ZZ^n/k\right)\geq\left(1-\frac{C}{k}\right)\left|P\right|k^n,       \q\hbox{for all $k\in\NN$}.
    $$
\end{prop}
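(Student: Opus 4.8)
The plan is to deduce the bound from a covering argument: the lattice points of $\tfrac1k\ZZ^n$ lying inside $P$ cover an inner parallel body of $P$, and the volume lost in passing to that parallel body is controlled by the surface area of $P$, hence — by monotonicity — by a constant depending only on $K$. This mirrors, in reverse, the cube/Minkowski-sum computation behind Lemma \ref{lattice upper bound}.

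\emph{Step 1 (lattice points cover $P\ominus Q$).} Put $Q:=[-\tfrac1{2k},\tfrac1{2k}]^n$, so that the translates $x+Q$, $x\in\tfrac1k\ZZ^n$, tile $\RR^n$ up to a null set. If $z+Q\subseteq P$, let $x\in\tfrac1k\ZZ^n$ be obtained by rounding each coordinate of $z$ to the nearest multiple of $1/k$; then $x-z\in Q$, so $x\in z+Q\subseteq P$ and also $z\in x+Q$. Hence $P\ominus Q\subseteq\bigcup_{x\in P\cap\frac1k\ZZ^n}(x+Q)$, and taking Lebesgue measure (the cubes on the right being essentially disjoint, of volume $k^{-n}$ each) yields $\#(P\cap\tfrac1k\ZZ^n)\geq k^n\,|P\ominus Q|$.

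\emph{Step 2 (volume of $P\ominus Q$, and conclusion).} Since $Q\subseteq B(0,\tfrac{\sqrt n}{2k})$, we have $P\ominus Q\supseteq P\ominus B(0,\tfrac{\sqrt n}{2k})$. For a convex body the inner collar $\{z\in P:\mathrm{dist}(z,\partial P)<r\}$ has volume at most $r\,S(\partial P)$, where $S(\partial P)$ denotes surface area — the standard fact that surface area decreases along inner parallel bodies — and this holds for every $r>0$ (past the inradius $\rho$ it reads $|P|=\int_0^{\rho}S(\partial(P\ominus B(0,s)))\,ds\leq\rho\,S(\partial P)$) \cite{schneider}. Since that collar is exactly $P\setminus(P\ominus B(0,r))$, we get $|P\ominus B(0,r)|\geq|P|-r\,S(\partial P)$. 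Because surface area is monotone under inclusion of convex bodies, $S(\partial P)\leq S(\partial K)$ for all convex $P\subseteq K$ \cite{schneider}; this is what makes the estimate uniform in $P$. Combining with Step 1 and $|P|\geq\nu$,
$$
\#\!\left(P\cap\tfrac1k\ZZ^n\right)\ \geq\ k^n\Big(|P|-\tfrac{\sqrt n\,S(\partial K)}{2k}\Big)\ =\ |P|k^n\Big(1-\tfrac{\sqrt n\,S(\partial K)}{2k\,|P|}\Big)\ \geq\ \Big(1-\tfrac Ck\Big)|P|k^n,
$$
with $C:=\tfrac{\sqrt n\,S(\partial K)}{2\nu}$, depending only on $\nu$ and $K$. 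Together with Lemma \ref{lattice upper bound} this also establishes Theorem \ref{MainEMacThm}.

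The crux is Step 2, and precisely its \emph{uniformity} in $P$. The naive route — scaling $P$ toward an incenter by the factor $1-\tfrac{\sqrt n}{2k\rho}$, with $\rho$ the inradius — fails because $\rho$ is not bounded below over all convex $P\subseteq K$ with $|P|\geq\nu$ (a thin slab has tiny inradius), so that factor can be useless or even negative while the asserted inequality still holds (indeed for a thin $P$ the lattice count typically \emph{exceeds} $|P|k^n$). Replacing the inradius by the surface area, which \emph{is} controlled by $K$ alone via monotonicity of intrinsic volumes, rescues the argument; the only point needing care is that the inner-collar estimate survives once $r$ exceeds the inradius, which is the parenthetical computation above.
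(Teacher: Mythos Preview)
Your proof is correct and takes a genuinely different route from the paper's. Ironically, the paper proceeds exactly via the ``naive route'' you dismiss in your final paragraph: it first shows (Lemma~\ref{BallInsidePLemma}, via Blaschke selection) that the inradius of any convex $P\subset K$ with $|P|\ge\nu$ is bounded below by some $r=r(\nu,K)>0$, then scales $P$ toward the center of an inscribed $r$-ball by the factor $(1-\tfrac{n^{3/2}}{2rk})^{1/n}$ and verifies that the scaled body lies inside $(P\cap\ZZ^n/k)+[-\tfrac1{2k},\tfrac1{2k}]^n$. Your claim that this fails because ``a thin slab has tiny inradius'' is mistaken: a convex $P\subset K$ with $|P|\ge\nu$ cannot be arbitrarily thin, since $K$ is bounded---the diameter of $P$ is at most $\mathrm{diam}\,K$, so a lower volume bound forces a lower width (and hence inradius) bound. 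The paper extracts this via compactness; one could also argue directly via John's ellipsoid.

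That said, your surface-area argument is cleaner: it sidesteps the compactness step entirely and produces the explicit constant $C=\tfrac{\sqrt n\,S(\partial K)}{2\nu}$, whereas the paper's $C=\tfrac{n^{3/2}}{2r}$ depends on an inradius bound obtained non-constructively. The Step~1 covering observation (lattice cubes cover an inner body) is common to both proofs; your improvement is in Step~2, replacing control of the inradius by the immediate monotonicity $S(\partial P)\le S(\partial K)$.
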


\begin{proof} First, a useful lemma.

\blem\lb{BallInsidePLemma}
    For a convex body $K\subset\RR^n$ and $\nu\in(0,|K|]$, there exists $r=(\nu,K)>0$ such that for any convex body $P\subset K$ with $|P|\geq \nu$,
 there is a Euclidean ball of radius $r$ contained in $P$.
\elem

\bpf
    Suppose such an $r$ does not exist, i.e., there is a sequence $P_k\subset K$ with $|P_k|\geq \nu$ such that
    $$
        r_k:=\sup\left\{r\,:\,B\left(p,r\right)\subset P_k\mbox{ for some }p\right\}\to0 \q \hbox{ as $k\ra\infty$},
    $$
    where $B(p,r)$ denotes a Euclidean ball of radius $r$ centered at $p\in\RR^n$.
    By the Blaschke selection theorem, up to a subsequence we may assume $P_k\to P_\infty$ in Hausdorff distance $d_H$
    \cite[p. 62]{Blaschke16Book}, \cite[Theorem 1.8.7]{schneider}. By $d_H$-continuity of volume, $|P_\infty|\geq \nu>0$. In particular, $P_\infty$ has non-empty interior, i.e., there is $p\in P_\infty$ and $r_0>0$ with $B(p,r_0)\subset P_\infty$. For sufficiently large $k$,
    $
        d_H\left(P_k,P_\infty\right)<{r_0}/{2}.
    $
    In particular,
    $$
        P_k+B\left(0,\frac{r_0}{2}\right)\supset P\supset B\left(p,r_0\right)=B\left(p,\frac{r_0}{2}\right)+B\left(0,\frac{r_0}{2}\right).
    $$
    It follows that
    $
        P_k\supset B\left(p,{r_0}/{2}\right),
    $
    i.e.,
    $
        r_k\geq{r_0}/{2}
    $
    for sufficiently large $k$. This is a contradiction.
\epf

    \begin{figure}[ht]
        \centering
        \begin{tikzpicture}
            \draw[domain=-180:180,samples=100]plot({2.5*cos(\x)},{1.5*sin(\x)-exp(.3*cos(\x))+.5});
            \draw(2.2,-1.1)node{$P'$};
            \draw[domain=-180:180,samples=100]plot({3*cos(\x)},{1.8*sin(\x)-1.2*exp(.3*cos(\x))+.6});
            \draw(2.64,-1.32)node{$P$};
            \filldraw(0,0)circle(1pt)node[below right]{$p$};
            \draw(0,0)circle(.85);
            \draw(1,-1)node{$B(p,r)$};
        \end{tikzpicture}
        \caption{The convex bodies $P$ and $P'$, and the ball $B(p,r)\subset P$
        (Lemma \ref{P'PlatticeLem} and its proof).}
        \lb{FigPPprime}
    \end{figure}
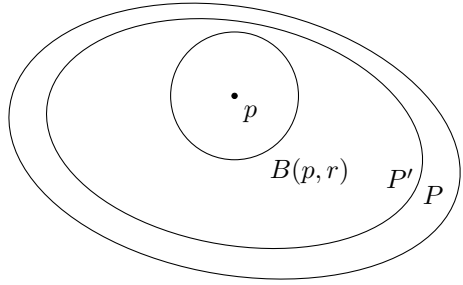

Next, we use Lemma \ref{BallInsidePLemma}  to construct an auxiliary convex sub-body that is useful for estimating from below the number of lattice points in $P$.

\blem\lb{P'PlatticeLem}
  Fix a convex body $K\subset\RR^n$ and a number $\nu\in(0,|K|]$.
  There exists $r=r(\nu,K)>0$ such that 
  for any convex sub-body $P\subset K$ satisfying $|P|\in(\nu,|K|]$, 
  there exists a point $p\in P$ with the property that 
    $$
        P_p:=\left(1-\frac{n^{3/2}}{2rk}\right)^\frac{1}{n}\left(P-p\right)+p\subset P
    $$
satisfies 
\begin{equation}\label{lattice lb}
        P_p\subset\left(P\cap\ZZ^n/k\right)+\left[-\frac{1}{2},\frac{1}{2}\right]^n\Big/k.
    \end{equation}
\elem

\bpf
Fix a convex body $P\subset K$ with $|P|\in(\nu,|K|]$.  
    Let $B(p,r)\subset P$ be given by Lemma \ref{BallInsidePLemma} 
    and let $C>0$  be a constant to be determined shortly.  Note,
    \begin{equation}\label{Bernoulli}
        \left(1-\frac{C}{k}\right)^\frac{1}{n}+\frac{C}{nk}\leq1,\q
        \hbox{for any $k>C$}.
    \end{equation}
    Set 
    $$
        P':=\left(1-\frac{C}{k}\right)^\frac{1}{n}\left(P-p\right)+p.
    $$
    Since $p\in P$ and $C<k$, $P'\subset P$ (see Figure \ref{FigPPprime}).
    Note that
    $$
        Q:=\left[-\frac{1}{2},\frac{1}{2}\right]^n\subset B\left(0,\frac{\sqrt{n}}{2}\right)\subset\RR^n.
    $$
    Thus, by \eqref{Bernoulli},
    \begin{equation*}
        P\supset P'+\frac{C}{kn}\left(P-p\right)\supset P'+\frac{C}{kn}B\left(0,r\right)\supset P'+\frac{2Cr}{kn^{3/2}}Q.
    \end{equation*}
Now, set $C=\frac{n^{3/2}}{2r}$, so that 
\beq\lb{PprimeQkEq}
P'+Q/k\subset P.
\eeq
Lemma \ref{P'PlatticeLem}
now follows from Claim \ref{PPrimeClaim}.
\epf

\bclaim\lb{PPrimeClaim}
 $P'\subset\left(P\cap\ZZ^n/k\right)+Q/k$.
\eclaim
\bpf
Let $x\in P'$. By \eqref{PprimeQkEq},
    \begin{equation}\label{cube inclusion}
        x+Q/k\subset P'+Q/k\subset P.
    \end{equation}
    Since $x+Q/k$ is a cube with edge length $1/k$, we can find
    \begin{equation}\label{lattice point}
        x'\in\left(x+Q/k\right)\cap\ZZ^n/k,
    \end{equation}
    i.e., a point $x'\in\ZZ^n/k$ such that $\|x-x'\|_\infty\leq\frac{1}{2k}$. Combining \eqref{cube inclusion} and \eqref{lattice point},
    $$
        x\in x'+Q/k\subset\left(\left(x+Q/k\right)\cap\ZZ^n/k\right)+Q/k\subset\left(P\cap\ZZ^n/k\right)+Q/k.
    $$
    This proves Claim \ref{PPrimeClaim}.
    \epf
By Lemma \ref{P'PlatticeLem},
    $$
    \baligned
    \frac{1}{k^n}\#\left(P\cap\ZZ^n/k\right)
    &=
    \left|\left(P\cap\ZZ^n/k\right)+Q/k\right|
    \ge
    \left|P_p\right|
    =\left(1-\frac{n^{3/2}}{2rk}\right)|P|,
    \ealigned
    $$
concluding the proof of Proposition \ref{lattice lower bound}.
\epf

\subsection{Idealized expected vanishing order}\label{the concave transform}

The main result of this subsection is a uniform one-sided estimate 
on the convergence~speed~of~$\overline{S}_{k,m}$. 

\begin{prop}\label{S upper}
    Suppose $m_k\in\{1,\ldots,d_k\}$ for $k\in\NN(L)$, and
    $
        \lim_{k\to\infty}\frac{m_k}{d_k}=\tau\in\left[0,1\right].
    $
    There exists $C,C'>0$ depending on $L$ and $\tau$ (independent of $v$ and $k$) such that     on $\ValXfin$,

    $$
        \overline{S}_{k,m_k}
        \leq
        \max\left\{
        \frac{\tau d_k}{m_k},1
        \right\}
        \left(
        \cS_\tau
        +\frac{C}{k}\cS_0
        \right),\q
        k\in\NN(L).
    $$
\end{prop}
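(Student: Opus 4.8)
The plan is to reduce the estimate on $\overline{S}_{k,m_k}$ to the continuous quantity $\cS_\tau$ via the idealized (``toric'') jumping numbers $i_{k,\ell}$, using the geometric interpretation of $\overline{S}_{k,m}$ as an average of the concave transform $G_v$ over lattice points in $\Delta$, together with the uniform Ehrhart estimates of \S\ref{lattice point estimates}. First I would recall from the proof of Lemma \ref{barSmkdecreasingRem} that if we order $\Delta\cap\ZZ^n/k = \{x_1,\ldots,x_{D_k}\}$ so that $G_v(x_1)\ge\cdots\ge G_v(x_{D_k})$, then $i_{k,\ell}(v)/k = G_v(x_\ell)$, hence
$$
k m\,\overline{S}_{k,m}(v) = \sum_{\ell=1}^m i_{k,\ell}(v) = k\sum_{\ell=1}^m G_v(x_\ell) = k\sum_{x\in\Delta\cap\ZZ^n/k,\ G_v(x)\ge i_{k,m}/k} G_v(x),
$$
with a possible adjustment for ties at the threshold value $i_{k,m}/k$. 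Writing $t_k := i_{k,m_k}(v)/k$, the first step is to relate $t_k$ to the continuous quantile $Q_v(\tau)$: by definition $m_k = \#\big(\Delta_v^{t_k}\cap\ZZ^n/k\big) = \#\big((\Delta_v^{t_k})_k'\big)$ in the idealized count, i.e., $m_k$ equals the number of lattice points in $\Delta$ with first-coordinate-type value $\ge t_k$ under $G_v$. Applying Theorem \ref{MainEMacThm} to the convex body $\Delta_v^{t_k}\subset\Delta$ (whose volume is bounded below by a constant depending on $\tau$ and $L$, using Lemma \ref{VvvolumestrictLem} and the hypothesis $m_k/d_k\to\tau$, at least once $\tau>0$; the case $\tau=0$ needs separate, easier treatment via $\overline S_{k,m}\le\overline S_{k,1}=\cS_0$ and Lemma \ref{mkMkLemma}) gives
$$
\Big|\,|\Delta_v^{t_k}|\,k^n - m_k\,\Big| \le C k^{n-1},
$$
so $|\Delta_v^{t_k}| = (m_k/d_k)\cdot(d_k/k^n) + O(1/k)$, which pins $t_k$ near $Q_v(\tau)$ up to $O(1/k)$ (by concavity of $t\mapsto(\Vol V_{v,\bullet}^t)^{1/n}$, Proposition \ref{V.t}, the volume function has a lower bound on its slope near $Q_v(\tau)$, so a volume change of $O(1/k)$ forces $|t_k - Q_v(\tau)| = O(1/k)$ — this uniformity in $v$ is the point where I expect the most care is needed, and it is presumably why $\cS_0$ appears as the scale in the error term rather than an absolute constant).

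Granting $t_k = Q_v(\tau) + O(\cS_0/k)$, the second step is to compare the lattice average $\frac1{m_k}\sum_{\ell\le m_k} G_v(x_\ell)$ to the continuous average $\cS_\tau = \big(\int_{\Delta_v^{Q_v(\tau)}} G_v\,dx\big)/\big(\int_{\Delta_v^{Q_v(\tau)}}dx\big)$ from the Remark after Proposition \ref{S_tau def}. Here I would apply Lemma \ref{lattice upper bound} to the nonnegative concave function $G_v$ restricted to the convex body $\Delta_v^{t_k}$:
$$
\frac1{k^n}\sum_{x\in\Delta_v^{t_k}\cap\ZZ^n/k} G_v(x) \le \int_{\Delta_v^{t_k}} G_v(x)\,dx + \frac{C}{k}\sup_{\Delta_v^{t_k}} G_v \le \int_{\Delta_v^{t_k}} G_v\,dx + \frac{C}{k}\cS_0(v),
$$
using $\sup_\Delta G_v = \cS_0(v)$ (recall the discussion after Lemma \ref{sliceDivOkBodyLem}, and that $i_{k,1}/k\le\cS_0$). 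Then $k m_k \overline S_{k,m_k} = k\sum_{x\in\Delta_v^{t_k}\cap\ZZ^n/k}G_v(x) \le k^{n+1}\big(\int_{\Delta_v^{t_k}}G_v\,dx + \tfrac{C}{k}\cS_0\big)$, so dividing by $k m_k$,
$$
\overline{S}_{k,m_k}(v) \le \frac{k^n}{m_k}\int_{\Delta_v^{t_k}} G_v\,dx + \frac{k^{n-1}}{m_k}\,C\,\cS_0(v).
$$
Now $k^n/m_k = (d_k/m_k)(k^n/d_k) = (1/\tau + o(1))\cdot(1/|\Delta| + O(1/k))$ when $\tau>0$, while $\int_{\Delta_v^{t_k}}G_v\,dx = \int_{\Delta_v^{Q_v(\tau)}}G_v\,dx + O(\cS_0/k)$ (the symmetric difference $\Delta_v^{t_k}\triangle\Delta_v^{Q_v(\tau)}$ has volume $O(1/k)$ and $G_v\le\cS_0$ there). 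Combining with $\int_{\Delta_v^{Q_v(\tau)}}G_v\,dx = \cS_\tau(v)\cdot|\Delta_v^{Q_v(\tau)}|$ and $|\Delta_v^{Q_v(\tau)}| = \tau|\Delta|$ (this is the defining property of $Q_v(\tau)$, at least when $\tau>\mu_v(\{\cS_0\})$; the charge case is absorbed into the $\max\{\tau d_k/m_k,1\}$ factor exactly as in Lemma \ref{mkMkLemma}), the leading term becomes $\frac{\tau d_k}{m_k}\cdot\frac{1}{\tau|\Delta|}\cdot\tau|\Delta|\cdot\cS_\tau(v) = \frac{\tau d_k}{m_k}\cS_\tau(v)$ up to the stated $\frac{C}{k}\cS_0$ correction, and the $\max\{\tau d_k/m_k,1\}$ appears because when $m_k > \tau d_k$ (so $\tau d_k/m_k<1$) we instead bound $\overline S_{k,m_k}\le\overline S_{k,\lceil\tau d_k\rceil}$ by monotonicity (Lemma \ref{barSmkdecreasingRem}) and run the same argument at level $\lceil\tau d_k\rceil$.

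The main obstacle, as flagged above, is establishing the $O(1/k)$ control of $t_k = i_{k,m_k}/k$ near $Q_v(\tau)$ \emph{uniformly in $v\in\ValXfin$}, with the error measured in units of $\cS_0(v)$. The resolution should be a scaling argument: replacing $v$ by $v/\cS_0(v)$ rescales $G_v$, $\Delta_v^t$, and all the jumping data by the factor $\cS_0(v)$, reducing to the case $\cS_0(v)=1$ where $\Delta_v^t\subset\Delta$ for $t\in[0,1]$ and one has a genuinely uniform lower bound on the concavity-defect slope of $t\mapsto(\Vol V_{v,\bullet}^t)^{1/n}$ near $Q_v(\tau)$ coming purely from $\Vol L$ and $\tau$ (via Claim \ref{conv lem 1} and the fact that $(\Vol V^t_{v,\bullet})^{1/n}$ is concave, decreasing, and vanishes at $t=1$); then Theorem \ref{MainEMacThm}, also applied after rescaling the body, gives the uniform $O(1/k)$ bound and multiplying back restores the $\cS_0(v)$ factor. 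Everything else is bookkeeping with the two regimes $\tau=0$ (handled by Proposition \ref{Strong version}, Lemma \ref{mkMkLemma}, and $\overline S_{k,1}=\cS_0$) and $\tau\in(0,1]$, and with the two sub-cases $\tau\lessgtr\mu_v(\{\cS_0(v)\})$ exactly as in the proof of Proposition \ref{S_tau def}.
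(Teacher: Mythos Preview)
Your ingredients are the same as the paper's --- the concave transform $G_v$, Lemma \ref{lattice upper bound} applied to $G_v$ on a sub-body of $\Delta$, and the lower Ehrhart bound from Proposition \ref{lattice lower bound} --- but you take an unnecessary detour that creates the very obstacle you flag. The paper never tries to locate $t_k=i_{k,m_k}(v)/k$ near $Q_v(\tau)$. Instead it works directly with the \emph{fixed} body $\Delta_v^{Q_v(\tau)}$ and its lattice count $M_k$ \eqref{MkEq}: it applies Lemma \ref{lattice upper bound} to $G_v$ on $\Delta_v^{Q_v(\tau)}$ to bound $\overline{S}_{k,M_k}$, bounds $M_k$ below by Proposition \ref{lattice lower bound}, and then invokes Lemma \ref{mkMkLemma} (which is exactly the inequality $\overline S_{k,m_k}\le\max\{M_k/m_k,1\}\overline S_{k,M_k}$) to pass back to $m_k$. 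The factor $\max\{\tau d_k/m_k,1\}$ then falls out of $M_k=(1+O(1/k))\tau d_k$. The case $Q_v(\tau)=\cS_0(v)$ (which covers $\tau=0$ and, for each $v$, the range $\tau\le\mu_v(\{\cS_0(v)\})$) is dispatched in one line: then $\cS_\tau(v)=\cS_0(v)\ge\overline S_{k,m_k}(v)$.

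Your proposed resolution of the ``main obstacle'' is not correct as stated. After rescaling to $\cS_0(v)=1$ you assert that $(\Vol V_{v,\bullet}^t)^{1/n}$ \emph{vanishes} at $t=1$ and derive a uniform slope bound from Claim \ref{conv lem 1}. But for general $v\in\ValXfin$ one only has $\lim_{t\to 1^-}\Vol V_{v,\bullet}^t=\mu_v(\{\cS_0(v)\})\VolL$ \eqref{mu charge}, which can be any value in $[0,\VolL)$; the slope bound you would get is $(1-(\mu_v(\{\cS_0(v)\})/\tau)^{1/n})(\VolL)^{1/n}$, which is \emph{not} uniform in $v$ as the charge approaches $\tau$. (Your argument would be valid on $\ValXdiv$ by Lemma \ref{muvdivLem}, but the proposition is on $\ValXfin$.) Moreover, even granting volume control $|\Delta_v^{t_k}|=m_k/k^n+O(1/k)$, the symmetric difference $\Delta_v^{t_k}\triangle\Delta_v^{Q_v(\tau)}$ has volume $|m_k-\tau d_k|/k^n+O(1/k)$, which is only $o(1)$, not $O(1/k)$, under the bare hypothesis $m_k/d_k\to\tau$; your monotonicity reduction handles $m_k>\tau d_k$ but you do not address $m_k<\tau d_k$. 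All of this evaporates once you replace the moving body $\Delta_v^{t_k}$ by the fixed $\Delta_v^{Q_v(\tau)}$ and use Lemma \ref{mkMkLemma}.
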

\begin{proof}
Let $v\in\ValXfin$. Let $Q_v(\tau)$ be as in \eqref{t_0 def}, i.e.,
    $$
        Q_v(\tau)=\sup\left\{t\leq\cS_0\left(v\right)\,:\,\left|\Delta^t\right|\geq\tau{{\VolL}/{n!}}\right\}.
    $$
    If $Q_v(\tau)=\cS_0(v)$, by Proposition \ref{S_tau def},
    $$
        \cS_\tau\left(v\right)=\frac{\int_{\left\{\cS_0\left(v\right)\right\}}td\mu_v\left(t\right)}{\int_{\left\{\cS_0\left(v\right)\right\}}d\mu_v\left(t\right)}=\cS_0\left(v\right)\geq\overline{S}_{k,m_k}\left(v\right).
    $$
    Next, suppose $Q_v(\tau)<\cS_0(v)$. Then by Claim \ref{conv lem 1}, $\Vol V_{v,\bullet}^{Q_v(\tau)}>0$. By continuity of the concave function $t\mapsto(\Vol V_{v,\bullet}^t)^\frac{1}{n}$ (recall Proposition \ref{V.t}), supremum is attained, and (recall \eqref{LM})
    \beq\lb{QvtauvolEq}
        \left|\Delta^{Q_v(\tau)}\right|=\tau{{\VolL}/{n!}}>0.
    \eeq
    Recall \eqref{MkEq}. By Lemma \ref{barSmkdecreasingRem},  Lemma \ref{lattice upper bound}, \eqref{GvDef}, \eqref{LM}, and \eqref{S_infty},
    \begin{align*}
        \overline{S}_{k,M_k}\left(v\right)&=\frac{1}{M_k}\sum_{x\in\Delta_v^{Q_v(\tau)}\cap\,\ZZ^n/k}G_v\left(x\right)\\
        &\leq\frac{k^n}{M_k}\left(\int_{\Delta_v^{Q_v(\tau)}}G_v\left(x\right)dx+\frac{C_1}{k}\sup G_v\right)\\
        &=\frac{k^n}{M_k}\left(\tau\frac{\VolL}{n!} \cS_\tau\left(v\right)+\frac{C_1}{k}\cS_0\left(v\right)\right)
        =\frac{\tau k^n}{M_k}\frac{\VolL}{n!}\left(\cS_\tau\left(v\right)+\frac{C_2}{k}\cS_0\left(v\right)\right).
    \end{align*}
    Moreover, by Proposition \ref{lattice lower bound},
    $$
        M_k\geq\left(1-\frac{C_3}{k}\right)\tau\frac{\VolL}{n!}k^n.
    $$
    Thus, by Lemma \ref{mkMkLemma},
    \begin{align*}
        \overline{S}_{k,m_k}\left(v\right)&\leq\max\left\{\frac{\tau k^n}{m_k}\frac{\VolL}{n!},\left(1-\frac{C_3}{k}\right)^{-1}\right\}\left(\cS_\tau\left(v\right)+\frac{C_2}{k}\cS_0\left(v\right)\right)\\
        &\leq\left(1+\frac{C_4}{k}\right)\max\left\{\frac{\tau d_k}{m_k},1\right\}\left(\cS_\tau\left(v\right)+\frac{C_2}{k}\cS_0\left(v\right)\right)
        \leq\max\left\{\frac{\tau d_k}{m_k},1\right\}\left(\cS_\tau\left(v\right)+\frac{C_5}{k}\cS_0\left(v\right)\right).
    \end{align*}
\end{proof}

\subsection{Well-definedness of stability thresholds}

In this subsection we finish the construction of the invariants $\boldsymbol\delta_\tau$ (Corollary \ref{deltadoubleasymCor}) and give an asymptotic lower bound.

\bcor\label{S upper bound}
    Suppose $m_k\in\{1,\ldots,d_k\}$ for $k\in\NN(L)$, and
    $
        \lim_{k\to\infty}\frac{m_k}{d_k}=\tau\in\left[0,1\right].
    $
There exists $C>0$ depending on $L$ and $\tau$ (independent of $k$) such that on $\ValXfin$,
    $$
        S_{k,m_k}
        \leq\left(1+\frac{C}{k}\right)\max\left\{\frac{\tau d_k}{m_k},1\right\}\cS_\tau,
                \q k\in\NN(L).
    $$
\ecor
\begin{proof}
    By Remark \ref{S leq S upper} and Proposition \ref{S upper},
    $$
        S_{k,m_k}
        \leq\max\left\{\frac{\tau d_k}{m_k},1\right\}
        \left(\cS_\tau
        +\frac{C_1}{k}\cS_0
        \right).
    $$
    Since 
    $
        \cS_0
        \leq\left(n+1\right)\cS_1
    $
    \cite[Lemma 2.6]{BJ20}
    and as $\cS_\tau$ decreases in $\tau$ (Proposition \ref{S_tau def}),
    \begin{align*}
        S_{k,m_k}\left(v\right)&\leq\max\left\{\frac{\tau d_k}{m_k},1\right\}\left(\cS_\tau\left(v\right)+\frac{C_2}{k}\cS_1\left(v\right)\right)\\
        &\leq\max\left\{\frac{\tau d_k}{m_k},1\right\}\left(\cS_\tau\left(v\right)+\frac{C_2}{k}\cS_\tau\left(v\right)\right)
        =\left(1+\frac{C_2}{k}\right)\max\left\{\frac{\tau d_k}{m_k},1\right\}\cS_\tau\left(v\right).
    \end{align*}
\end{proof}

\bcor 
\lb{deltadoubleasymCor}
    Suppose $m_k\in\{1,\ldots,d_k\}$ for $k\in\NN(L)$, and
    $
        \lim_{k\to\infty}\frac{m_k}{d_k}=\tau\in\left[0,1\right].
    $
The limit
    $$
        \bm{\delta}_\tau:=\lim_{k\to\infty}\delta_{k,m_k}
    $$
    exists, and
    \begin{equation}\label{delta_tau alg}
        \bm{\delta}_\tau=\inf_{v\in\ValXfin}\frac{A\left(v\right)}{\cS_\tau\left(v\right)}=\inf_{v\in\ValXdiv}\frac{A\left(v\right)}{\cS_\tau\left(v\right)}.
    \end{equation}
    In particular, $\bm{\delta}_0=\alpha$, and $\bm{\delta}_1=\delta$.
    Moreover,
    $$
        \delta_{k,m_k}\geq\left(1-\frac{C}{k}\right)\min\left\{\frac{m_k}{\tau d_k},1\right\}\bm{\delta}_\tau.
    $$
\ecor
\begin{proof}
Let $v_0\in\ValXfin$. By Lemma \ref{deltakminfProp},
    $$
        \limsup_{k\to\infty}\delta_{k,m_k}=\limsup_{k\to\infty}\inf_{v\in\ValX}\frac{A\left(v\right)}{S_{k,m_k}\left(v\right)}\leq\limsup_{k\to\infty}\frac{A\left(v_0\right)}{S_{k,m_k}\left(v_0\right)}=\frac{A\left(v_0\right)}{\lim\limits_{k\to\infty}S_{k,m_k}\left(v_0\right)}=\frac{A\left(v_0\right)}{\cS_\tau\left(v_0\right)}.
    $$
    Since $v_0$ is arbitrary, by Corollary \ref{S upper bound},
    $$
        \limsup_{k\to\infty}\delta_{k,m_k}\leq\inf_{v\in\ValXfin}\frac{A\left(v\right)}{\cS_\tau\left(v\right)}\leq\inf_{v\in\ValXdiv}\frac{A\left(v\right)}{\cS_\tau\left(v\right)}\leq\left(1+\frac{C}{k}\right)\max\left\{\frac{\tau d_k}{m_k},1\right\}\delta_{k,m_k}.
    $$
    Letting $k\to\infty$,
    $$
\limsup_{k\to\infty}\delta_{k,m_k}\leq\inf_{v\in\ValXfin}\frac{A\left(v\right)}{\cS_\tau\left(v\right)}\leq\inf_{v\in\ValXdiv}\frac{A\left(v\right)}{\cS_\tau\left(v\right)}\leq\liminf_{k\to\infty}\delta_{k,m_k}.
    $$
    This completes the proof.
\end{proof}

\section{Joint asymptotics of
Grassmannian thresholds: upper bound}\label{ub section}

The main result of this section is the remaining one-sided
asymptotics of $S_{k,m}$ on $\ValXdiv$.

\begin{theorem}\label{SdoubleasymCor}
    Suppose $m_k\in\{1,\ldots,d_k\}$ for $k\in\NN(L)$, and
    $
        \lim_{k\to\infty}\frac{m_k}{d_k}=\tau\in\left[0,1\right].
    $
    For any $v\in\ValXdiv$, there exists $C>0$ depending on $L$, $\tau$, $\inf_k\{m_k/d_k\}$, and $v$ (independent~of~$k$)~such~that
    \begin{numcases}{S_{k,m_k}\left(v\right)\geq}
        \left(1-C\left(\max\left\{\frac{m_k}{d_k},\frac{1}{k}\right\}\right)^\frac{1}{n}\right)\cS_\tau\left(v\right),&$\tau=0$,\label{Case2}\\
        \nonumber\\
        \left(1-\frac{C}{k}\right)\min\left\{\frac{\tau d_k}{m_k},1\right\}\cS_\tau\left(v\right),&$\tau\in(0,1]$.\label{Case1}
    \end{numcases}
\end{theorem}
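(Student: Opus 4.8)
The plan is to prove the two cases by separate arguments, both launched from the reverse comparison with idealized jumping numbers of Lemma~\ref{weakSkmIneq}; together with Corollary~\ref{S upper bound} this pins down the asymptotics of $S_{k,m}$ on $\ValXdiv$. Throughout I fix $v\in\ValXdiv$ and the admissible flag of Lemma~\ref{sliceDivOkBodyLem}, so that $G_v=p_1$ on the Okounkov body $\Delta$ (cf.~\eqref{Gp1Eq}), $\cS_0(v)=\max_\Delta p_1$, and, ordering $\Delta\cap\ZZ^n/k=\{x_1,\dots,x_{D_k}\}$ by nonincreasing $p_1$, one has $i_{k,\ell}(v)=k\,p_1(x_\ell)$ (proof of Lemma~\ref{barSmkdecreasingRem}). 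By \eqref{S_infty}, $\cS_\tau(v)$ is the average of $p_1$ over $\Delta_v^{Q_v(\tau)}$; if $\cS_\tau(v)=0$ the claim is vacuous, so assume $\cS_\tau(v)>0$. I also use, uniformly in $k$, that the gap count $g_k:=D_k-d_k$ is $O(k^{n-1})$ (Lemma~\ref{DkdkLem}), hence $d_k=\tfrac1{n!}\VolL\,k^n+O(k^{n-1})$ by Lemma~\ref{lattice upper bound} and Proposition~\ref{lattice lower bound} applied to $\Delta$, and that $\cS_0(v)\le(n+1)\cS_1(v)\le(n+1)\cS_\tau(v)$ \cite[Lemma 2.6]{BJ20}, $\cS_\tau(v)\le\cS_0(v)$.

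For $\tau\in(0,1]$ I first reduce to the balanced choice $m_k^\ast:=\lfloor\tau d_k\rfloor$. Since $m\,S_{k,m}=\tfrac1k\sum_{\ell\le m}j_{k,\ell}$ is nondecreasing in $m$ while $S_{k,m}$ is nonincreasing in $m$ by \eqref{m monotonicity}, one gets $S_{k,m_k}\ge\min\{m_k^\ast/m_k,1\}\,S_{k,m_k^\ast}\ge(1-C/k)\min\{\tau d_k/m_k,1\}\,S_{k,m_k^\ast}$, the last step using $m_k^\ast\ge\tau d_k-1$ and $\tau d_k\gtrsim k^n$; so it suffices to prove the core estimate $S_{k,m_k^\ast}(v)\ge(1-C/k)\cS_\tau(v)$. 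For this, Lemma~\ref{weakSkmIneq} gives $S_{k,m_k^\ast}(v)\ge\tfrac1{km_k^\ast}\sum_{\ell=1+g_k}^{m_k^\ast+g_k}i_{k,\ell}(v)$. Writing $P:=\Delta_v^{Q_v(\tau)}$ and $M_k:=\#(P\cap\ZZ^n/k)$, by \eqref{QvtauvolEq} $|P|=\tau\VolL/n!>0$ is a fixed positive number, so Proposition~\ref{lattice lower bound} applies to $P$, giving $M_k\ge(1-C/k)|P|k^n$, and Lemma~\ref{lattice upper bound} applied with $K=\Delta$, $P$, and the nonnegative affine function $G=\cS_0(v)-p_1$ gives $\sum_{x\in P\cap\ZZ^n/k}p_1(x)\ge k^n\!\int_P p_1\,dx-Ck^{n-1}=k^n|P|\cS_\tau(v)-Ck^{n-1}$. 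Since the top $M_k$ of the ordered lattice points of $\Delta$ are exactly $P\cap\ZZ^n/k$ (ties all occur at value $Q_v(\tau)$ and are immaterial to the sum), this reads $\sum_{\ell\le M_k}i_{k,\ell}(v)\ge k^{n+1}|P|\cS_\tau(v)-Ck^n$. The index range $[1+g_k,\,m_k^\ast+g_k]$ differs from $[1,M_k]$ only by $O(k^{n-1})$ terms at each end (recall $m_k^\ast+g_k=|P|k^n+O(k^{n-1})$ and $M_k=|P|k^n+O(k^{n-1})$), each such term being $\le i_{k,1}(v)\le k\cS_0(v)$; hence $\sum_{\ell=1+g_k}^{m_k^\ast+g_k}i_{k,\ell}(v)\ge k^{n+1}|P|\cS_\tau(v)-Ck^n(1+\cS_0(v))$. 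Dividing by $km_k^\ast$ and using $m_k^\ast=|P|k^n+O(k^{n-1})$, $\cS_0(v)\le(n+1)\cS_\tau(v)$, $\cS_\tau(v)\le\cS_0(v)$ yields $S_{k,m_k^\ast}(v)\ge\cS_\tau(v)-C(1+\cS_0(v))/k\ge(1-C'/k)\cS_\tau(v)$, with $C'$ depending on $v$ through $\cS_0(v)/\cS_\tau(v)$; the finitely many small $k$ are absorbed into $C'$.

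For $\tau=0$ the bodies $\Delta_v^t$ collapse as $t\to\cS_0(v)$, so rather than averaging I drop to a single idealized jumping number: since $i_{k,\ell}$ is nonincreasing, Lemma~\ref{weakSkmIneq} gives $S_{k,m_k}(v)\ge\tfrac1k i_{k,\,m_k+g_k}(v)=\max\{t:\#(\Delta_v^t\cap\ZZ^n/k)\ge m_k+g_k\}$. It then remains to verify $\#(\Delta_v^{t_k^\ast}\cap\ZZ^n/k)\ge m_k+g_k$ for $t_k^\ast:=\cS_0(v)\bigl(1-C(\max\{m_k/d_k,1/k\})^{1/n}\bigr)$ (when $t_k^\ast<0$ the asserted bound is trivially true as $S_{k,m_k}\ge0$). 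Here I avoid the degenerate $\nu\to0$ issue in Proposition~\ref{lattice lower bound} by the decomposition $\#(\Delta_v^t\cap\ZZ^n/k)=\#(\Delta\cap\ZZ^n/k)-\#\bigl((\Delta\cap\{p_1<t\})\cap\ZZ^n/k\bigr)\ge\bigl(|\Delta|k^n-Ck^{n-1}\bigr)-\bigl((|\Delta|-|\Delta_v^t|)k^n+Ck^{n-1}\bigr)=|\Delta_v^t|k^n-Ck^{n-1}$, where the lower bound is Proposition~\ref{lattice lower bound} on the fixed body $\Delta$ and the upper bound is Lemma~\ref{lattice upper bound} (with $G\equiv1$) on the sub-body $\Delta\cap\{p_1\le t\}\subset\Delta$, whose constant is uniform in $t$. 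Combining with the continuous concavity bound $|\Delta_v^{t_k^\ast}|\ge(1-t_k^\ast/\cS_0(v))^n|\Delta|=C^n\max\{m_k/d_k,1/k\}\,|\Delta|$ from Claim~\ref{conv lem 1}, and with $m_k+g_k\le C\max\{m_k,k^{n-1}\}$, one sees that choosing $C$ large (depending on $\VolL$, $v$, $n$) makes $\#(\Delta_v^{t_k^\ast}\cap\ZZ^n/k)\ge m_k+g_k$ for all $k$, which is precisely \eqref{Case2}.

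The main obstacle is the collapsing case: one needs a lower bound on the lattice-point count of $\Delta_v^t$ that remains uniform as $|\Delta_v^t|\to0$, whereas Proposition~\ref{lattice lower bound} as stated degenerates (its constant depends on a lower volume bound). The fix is the decomposition $\Delta=\Delta_v^t\sqcup(\Delta\cap\{p_1<t\})$, which rewrites the shrinking count as a difference of two counts for bodies of controlled geometry — the fixed $\Delta$ and a convex sub-body of $\Delta$ — so that the error stays $O(k^{n-1})$ with a $t$-independent constant; getting this right, together with the careful bookkeeping of the index shift by $g_k=O(k^{n-1})$ in Lemma~\ref{weakSkmIneq} and of the $m_k$-versus-$\tau d_k$ discrepancy via \eqref{m monotonicity}, is where essentially all the work lies. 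The non-collapsing case is comparatively routine once the reduction to $m_k=\lfloor\tau d_k\rfloor$ and the identification of the ``top $M_k$ lattice points'' with $\Delta_v^{Q_v(\tau)}\cap\ZZ^n/k$ are in hand.
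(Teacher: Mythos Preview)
Your argument is correct in both regimes, but neither route is the one the paper takes.

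For $\tau>0$ the paper does not reduce to $m_k^\ast=\lfloor\tau d_k\rfloor$; instead it first shows $S_{k,m_k}\ge\overline S_{k,m_k}-C/k$ directly from Lemma~\ref{weakSkmIneq} (using $m_k\gtrsim k^n$), then controls $\overline S_{k,M_k}$ by passing to the $(n{+}1)$-dimensional \emph{rooftop body} $\widehat K_\tau=\{(x,t):x\in\Delta_v^{Q_v(\tau)},\ 0\le t\le x_1\}$ and applying Proposition~\ref{lattice lower bound} in $\RR^{n+1}$, finishing via Lemma~\ref{mkMkLemma}. Your alternative---applying Lemma~\ref{lattice upper bound} to the nonnegative affine function $G=\cS_0(v)-p_1$ on $P=\Delta_v^{Q_v(\tau)}$---is a nice shortcut that stays in $\RR^n$ and avoids the rooftop construction entirely; your index-shift bookkeeping is also clean. (One small point you leave implicit: $|P|=\tau\VolL/n!$ from \eqref{QvtauvolEq} requires $Q_v(\tau)<\cS_0(v)$, but this is automatic here since for divisorial $v$ with the flag of Lemma~\ref{sliceDivOkBodyLem} the measure $\mu_v$ is absolutely continuous by Lemma~\ref{muvdivLem}.)

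For $\tau=0$ the difference is more substantive. The paper builds a family of \emph{cones} $\cC_v^{a,\cS_0(v),V}(t)$ with vertex on the collapsed face $\Delta\cap p_1^{-1}(\cS_0(v))$ and, via a translation-and-rescaling argument (Lemma~\ref{TranslationLemma}, Proposition~\ref{firstconeprop}), shows these cones contain $\gtrsim\ell_k^n$ lattice points of $\ZZ^n/k$, which yields the same bound \eqref{Case2}. Your complementary decomposition $\#(\Delta_v^t\cap\ZZ^n/k)=\#(\Delta\cap\ZZ^n/k)-\#(\Delta\cap\{p_1<t\}\cap\ZZ^n/k)$, combined with Proposition~\ref{lattice lower bound} on the fixed body $\Delta$, Lemma~\ref{lattice upper bound} on the convex sub-body $\Delta\cap\{p_1\le t\}$ (whose constant is uniform in the sub-body), and Claim~\ref{conv lem 1}, is more elementary and sidesteps the cone machinery entirely. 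The price is that your route yields only the exponent $1/n$: the paper's cone construction upgrades (Proposition~\ref{secondconeprop}) to exponent $1/(n-\iota)$ where $\iota=\dim\bigl(\Delta\cap p_1^{-1}(\cS_0(v))\bigr)$, and this refinement feeds into the improved estimates \eqref{DeltaimprovedasympEq}--\eqref{alphaimprovedasympEq}. Your decomposition, relying on Claim~\ref{conv lem 1}, does not see the dimension of the collapsed face.
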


\subsection{Joint asymptotics of \texorpdfstring{$S_{k,m}$: non-collapsing case and the rooftop Okounkov body}{S{k,mk}}}
\label{SkmnoncollapsingSubSec}

The proof in the non-collapsing case relies on a key idea from \cite{JR2} in the toric case: we construct
a `rooftop Okounkov body' and then compare its volume to its lattice point count.

\blem
\lb{DkdkLem}
For some $C>0$ independent of $k$,
$0\le D_k-d_k\le Ck^{n-1}$.
\elem

\bpf
    By Lemma \ref{lattice upper bound},
$
        D_k\leq\left|\Delta\right|k^n+C_1k^{n-1},
$
    while 
    $
        d_k=\left|\Delta\right|k^n+O\left(k^{n-1}\right)
    $
    \cite[Theorem A]{LM09}.
The result follows.
\epf

\begin{proof}[Proof of Theorem \ref{SdoubleasymCor} when $\tau>0$]
Follow the notation in the proof of Lemma \ref{sliceDivOkBodyLem}
for the Okounkov body associated to the flag \eqref{flagFEq}.

By Definition \ref{upper S}, $i_{k,\ell}\in[0,\cS(v)]$. Thus, by Lemma \ref{weakSkmIneq} (recall
we are working on $\ValXdiv$) together with  \eqref{first est} and Lemma \ref{DkdkLem}
(and using $\tau>0$, i.e., $m_k=O(\tau d_k)=O(k^n)$),
    \begin{align}
        S_{k,m_k}\left(v\right)&\geq\frac{1}{km_k}\left(\sum_{\ell=1}^{m_k+D_k-d_k}i_{k,\ell}\left(v\right)-\left(D_k-d_k\right)k\cS_0\left(v\right)\right)\nonumber\\
        &\geq\frac{1}{km_k}\sum_{\ell=1}^{m_k}i_{k,\ell}\left(v\right)-\frac{D_k-d_k}{m_k}\cS_0\left(v\right)
        \geq\frac{1}{km_k}\sum_{\ell=1}^{m_k}i_{k,\ell}\left(v\right)-\frac{C_3}{k}
        =\overline{S}_{k,m_k}\left(v\right)-\frac{C_3}{k}.\label{1st lower bd}
    \end{align}
    Since $v\in\ValXdiv$,
    Proposition \ref{S_tau def} together with Lemma \ref{sliceDivOkBodyLem} imply that $\cS_\tau(v)$ is the first coordinate of the barycenter of $\Delta_v^{Q_v\left(\tau\right)}$.
    In other words (using
    $\tau>0$ and \eqref{QvtauvolEq}),
\beq
\lb{cCtauvolEq}
\cS_\tau\left(v\right)=\frac{n!}{\tau\VolL}\int_{\Delta_v^{Q_v\left(\tau\right)}}x_1dx
=
\frac1{\big|\Delta_v^{Q_v(\tau)}\big|}\int_{\Delta_v^{Q_v\left(\tau\right)}}x_1dx,
\eeq
where $x_1=p_1(x)$.
    By Theorem \ref{MainEMacThm} applied to $\Delta_v^{Q_v(\tau)}$ (using $\tau>0$),
    $$
        \left(1-\frac{C_4}{k}\right)\tau\frac{\VolL}{n!}k^n\leq M_k\leq\left(1+\frac{C_4}{k}\right)\tau\frac{\VolL}{n!}k^n.
    $$
    Hence,
    \begin{equation}\label{3rd lower bd}
        \left(1-\frac{C_5}{k}\right)\tau d_k\leq M_k\leq\left(1+\frac{C_5}{k}\right)\tau d_k.
    \end{equation}
    Consider the `rooftop' convex body
    $$
        \widehat{K}_\tau:=\left\{\left(x,t\right)\in\RR^n\times\RR\,:\,x\in\Delta_v^{Q_v\left(\tau\right)},~0\leq t\leq x_1\right\}.
    $$
By \eqref{cCtauvolEq},
\beq\lb{rooftopvolEq}
        \left|\widehat{K}_\tau\right|=\int_{\Delta_v^{Q_v\left(\tau\right)}}x_1dx=\tau\frac{\VolL}{n!}\cS_\tau\left(v\right),
    \eeq
    and
    $$
        \widehat{K}_\tau\cap\ZZ^{n+1}/k=\bigcup_{x\in K\cap\ZZ^n/k}\left\{\left(x,0\right),\left(x,\frac{1}{k}\right),\ldots,\left(x,x_1\right)\right\}.
    $$
    Hence (recall \eqref{MkEq})
    $$
        \#\left(\widehat{K}_\tau\cap\ZZ^{n+1}/k\right)=\sum_{x\in K\cap\ZZ^n/k}\left(kx_1+1\right)=\sum_{\ell=1}^{M_k}i_{k,\ell}\left(v\right)+\#\left(K\cap\ZZ^n/k\right)=kM_k\overline{S}_{k,M_k}\left(v\right)+M_k.
    $$
    By Proposition \ref{lattice lower bound} and \eqref{rooftopvolEq},
    $
        kM_k\overline{S}_{k,M_k}\left(v\right)+M_k\geq\left(1-\frac{C_6}{k}\right)\tau\frac{\VolL}{n!}\cS_\tau\left(v\right)k^{n+1},
    $
    i.e.,
    \begin{equation}\label{4th lower bd}
        \overline{S}_{k,M_k}\left(v\right)\geq\left(1-\frac{C_6}{k}\right)\tau\frac{\VolL}{n!}\cS_\tau\left(v\right)\frac{k^n}{M_k}-\frac{1}{k}\geq\left(1-\frac{C_7}{k}\right)\tau\frac{d_k}{M_k}\cS_\tau\left(v\right)-\frac{1}{k}.
    \end{equation}
    \eqref{Case1} follows from \eqref{1st lower bd}, 
    Lemma \ref{mkMkLemma},
    \eqref{3rd lower bd}, and \eqref{4th lower bd}.
\end{proof}

\subsection{Cones and convex hulls in Okounkov bodies}

In performing lattice point counts in Okounkov bodies we will consider
certain canonical two-parameter families of convex sub-bodies of $\Delta$.

\bdefn
Let $v\in\ValXfin$, and let $a,b\in [\sigma(v),\cS_0(v)]$ with $a<b$.
The {\it $(a,b)$-slice-cone~of~$\Delta$}~is 
$$
\cC_v^{a,b}:=\co\left( G_v^{-1}(a)\cup G_v^{-1}(b)\right)\subset \Delta_v^a. 
$$
\edefn
Henceforth we will assume that $v\in\ValXdiv$ and that the flag is chosen as 
in \eqref{flagFEq} so that $G_v=p_1|_\Delta$ (recall \eqref{Deltavt}). 
Inside of the slice-cones of $\Delta$ one can fit
two distinguished families of convex sub-bodies. First,
the one-parameter superlevel sets
$$
\cC_v^{a,b}(t):=\cC_v^{a,b}\cap \{p_1\ge t\}, \q t\in[a,b].
$$
Second, for $V\in p_1^{-1}(b)\cap \Delta$, the cones
$$
\cC_v^{a,b,V}:=\co\left(p_1^{-1}(a)\cap\Delta\cup \{V\}\right)\subset \Delta_v^a,
$$
and their associated one-parameter family of sub-cones (see Figure \ref{Cone})
$$
\cC_v^{a,b,V}(t):=\cC_v^{a,b,V}\cap \{p_1\ge t\}, \q t\in[a,b].
$$

    \begin{figure}[ht]
        \centering
        \begin{tikzpicture}
            \draw[->](-.5,0)--(4.5,0);
            \draw[->](0,-.5)--(0,3);
            \draw(0,0)node[below left]{$0$};
            \draw[domain=30:330,samples=100]plot({2+1.8*cos(\x)},{2.4+sin(\x)-exp(.2*cos(\x))});
            \draw({2+.9*sqrt(3)},{1.9-exp(.1*sqrt(3))})--({2+.9*sqrt(3)},{2.9-exp(.1*sqrt(3))});
            \draw(3.3,2.3)node{$\Delta$};
            \draw(.92,{1.6-exp(-.12)})--(.92,{3.2-exp(-.12)})node[midway,right]{$\cC(a)$}--({2+.9*sqrt(3)},{2.2-exp(-.12)})node[right]{$V$}--cycle;
            \draw[dashed](.92,0)node[below]{$a$}--(.92,{1.6-exp(-.12)});
            \draw[dashed]({1.406+.405*sqrt(3)},{1.87-exp(-.12)})--({1.406+.405*sqrt(3)},{2.75-exp(-.12)})node[midway,right]{$\cC(t)$};
            \draw[dashed]({1.406+.405*sqrt(3)},0)node[below]{$t$};
            \draw[dashed]({2+.9*sqrt(3)},0)node[below]{$\cS_0(v)$}--({2+.9*sqrt(3)},{1.9-exp(.1*sqrt(3))});
        \end{tikzpicture}
        \caption{The cones $\cC(t)=\cC^{a,\cS_0(v),V}_v(t)$ in the Okounkov body $\Delta$.}\label{Cone}
    \end{figure}

    \begin{lemma}\lb{TranslationLemma}
        For $t\in[a,b)$,
        $$
            \frac{b-a}{b-t}\cC_v^{a,b,V}(t)
        $$
        is a translation of $\cC_v^{a,b,V}$.
    \end{lemma}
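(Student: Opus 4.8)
The plan is to realize the claimed translation by an explicit homothety centred at the apex $V$. Recall that $v\in\ValXdiv$ and the flag is chosen as in \eqref{flagFEq}, so that $G_v=p_1|_\Delta$; write $B:=p_1^{-1}(a)\cap\Delta$, which is convex, so that
$\cC_v^{a,b,V}=\co(B\cup\{V\})=\{(1-\lambda)p+\lambda V:p\in B,\ \lambda\in[0,1]\}$, and $p_1$ takes the whole interval $[a,b]$ on $\cC_v^{a,b,V}$ (it equals $a$ on $B$ and $b$ at $V$). Fix $t\in[a,b)$ and introduce the affine dilation $h\colon\RR^n\to\RR^n$, $h(x):=V+\tfrac{b-t}{b-a}(x-V)$, of ratio $\tfrac{b-t}{b-a}\in(0,1]$ centred at $V$. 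I would show that $h$ carries $\cC_v^{a,b,V}$ exactly onto $\cC_v^{a,b,V}(t)$, from which the statement is immediate after dilating back by $\tfrac{b-a}{b-t}$.

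For the inclusion $h(\cC_v^{a,b,V})\subseteq\cC_v^{a,b,V}(t)$ I would use two elementary observations: first, $h(x)=\tfrac{b-t}{b-a}x+\tfrac{t-a}{b-a}V$ is a convex combination of $x$ and $V$, so $h$ maps any convex set that contains $V$ into itself, giving $h(\cC_v^{a,b,V})\subseteq\cC_v^{a,b,V}$; second, $p_1\circ h$ is the affine self-map $s\mapsto b-\tfrac{b-t}{b-a}(b-s)$ of $\RR$, which sends $[a,b]$ onto $[t,b]$, so $p_1\ge t$ on $h(\cC_v^{a,b,V})$. For the reverse inclusion, take $y\in\cC_v^{a,b,V}$ with $p_1(y)\ge t$ and write $y=(1-\lambda)p+\lambda V$ with $p\in B$, $\lambda\in[0,1]$; since $p_1(y)=a+\lambda(b-a)\ge t$, one gets $\lambda\ge\tfrac{t-a}{b-a}$, equivalently $c:=\tfrac{(b-a)(1-\lambda)}{b-t}\in[0,1]$. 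Then $z:=V+c(p-V)$ lies on the segment $[V,p]\subseteq\cC_v^{a,b,V}$, and a one-line computation gives $h(z)=V+(1-\lambda)(p-V)=y$. Hence $\cC_v^{a,b,V}(t)=h(\cC_v^{a,b,V})=V+\tfrac{b-t}{b-a}\big(\cC_v^{a,b,V}-V\big)$.

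Finally I would dilate by $\tfrac{b-a}{b-t}$ to obtain
\[
\tfrac{b-a}{b-t}\,\cC_v^{a,b,V}(t)=\cC_v^{a,b,V}+\Big(\tfrac{b-a}{b-t}-1\Big)V=\cC_v^{a,b,V}+\tfrac{t-a}{b-t}\,V,
\]
so $\tfrac{b-a}{b-t}\,\cC_v^{a,b,V}(t)$ is the translate of $\cC_v^{a,b,V}$ by $\tfrac{t-a}{b-t}V$ (and both sides coincide with $\cC_v^{a,b,V}$ when $t=a$). There is no serious obstacle; the only place needing care is the bookkeeping in the reverse inclusion — checking that the scaling coefficient $c=\tfrac{(b-a)(1-\lambda)}{b-t}$ stays in $[0,1]$ exactly when $p_1(y)\ge t$, so that $h$ is a bijection of $\cC_v^{a,b,V}$ onto $\cC_v^{a,b,V}(t)$ rather than merely a self-map of the ambient body. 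An equivalent, slightly more conceptual route is to note that, $h$ being affine and fixing $V$, $h(\cC_v^{a,b,V})=\co\big(h(B)\cup\{V\}\big)$ is again the cone with apex $V$ over the slice $h(B)=\cC_v^{a,b,V}\cap p_1^{-1}(t)$, and truncating a cone above a level strictly between base and apex returns precisely the cone over that level's slice, which is $\cC_v^{a,b,V}(t)$.
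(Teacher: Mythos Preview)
Your proof is correct and follows essentially the same approach as the paper: both establish the identity $\cC_v^{a,b,V}(t)-V=\tfrac{b-t}{b-a}\big(\cC_v^{a,b,V}-V\big)$ via the homothety centred at the common apex $V$, and then rearrange to get the translation by $\tfrac{t-a}{b-t}V$. The paper simply asserts the similarity of the cones in one line, whereas you spell out the double inclusion carefully; your added detail is sound but not a different method.
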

    \begin{proof}
        The cones $\cC_v^{a,b,V}(t)$ all have the same tip $V$ and by construction
        are all similar. More precisely, 
        $$
            \cC_v^{a,b,V}(t)-V=\frac{b-t}{b-a}\left(\cC_v^{a,b,V}-V\right).
        $$
        I.e.,
        $\disp
            \frac{b-a}{b-t}\cC_v^{a,b,V}(t)=\cC_v^{a,b,V}+\frac{t-a}{b-t}V.
        $
    \end{proof}

    \bclaim\lb{CountingClaim}
For a convex body $K$, $\#(K\cap \ZZ^n/k)=\#(\frac k{\ell}K\cap \ZZ^n/\ell)$.
    \eclaim
\bpf
Observe that
$\#(K\cap \ZZ^n/k)=\#(\frac k{\ell}(K\cap \ZZ^n/k))=\#(\frac k{\ell}K\cap \frac k{\ell}(\ZZ^n/k))$.
\epf

\blem
\lb{cubetranslationlem}
Suppose that $t\in [a, b-\frac{\ell}k(b-a)]$. Then, 
$$
\#\left(\cC_v^{a,b,V}(t)\cap \ZZ^n/k\right)\ge 
\min_{x\in\left[-\frac{1}{2},\frac{1}{2}\right]^n}
\#\left((\cC_v^{a,b,V}+x)\cap \ZZ^n/\ell\right).
$$
\elem

\bpf
By inclusion, it suffices to prove the statement for $t=b-\frac{\ell}k(b-a)$.
By Claim \ref{CountingClaim}, the number of 
$\ZZ^n/k$ lattice
points of $\cC_v^{a,b,V}(t)$ coincides with the number
of $\ZZ^n/\ell$ lattice
points of $\frac k{\ell}\cC_v^{a,b,V}(t)$.
In turn, Lemma \ref{TranslationLemma} identifies the latter cone
with a translation of $\cC_v^{a,b,V}$. Thus,
$$
\#\left(\cC_v^{a,b,V}(t)\cap \ZZ^n/k\right)\ge 
\min_{x\in\RR^n}
\#\left((\cC_v^{a,b,V}+x)\cap \ZZ^n/\ell\right).
$$
The result follows since $\ZZ^n/\ell$ is invariant under translation by any integral vector.
\epf

\bprop\lb{firstconeprop}
Assume
that $\sigma(v)\le a<b\le\cS_0(v)$ and that if $a=\sigma(v)$ then 
$\dim \Delta\cap p_1^{-1}(a)=n-1$. Let $V\in \Delta\cap p_1^{-1}(b)$.
Consider a sequence $\{\ell_k\}$ with $\NN\ni\ell_k\le k$.
Suppose that $t\in [a, b-\frac{\ell_k}k(b-a)]$. Then for some $C_1,C_2>0$
independent of $k$ and $\ell_k$, 
$$
\#\left(\cC_v^{a,b,V}(t)\cap \ZZ^n/k\right)\ge C_2\ell_k^n, \q \hbox{\ for all $k\ge\ell_k>C_1$}.
$$
\eprop

\bpf
The result follows from Lemma \ref{cubetranslationlem} combined
with  Proposition \ref{lattice lower bound} applied to\newline $\cC_v^{a,b,V}+[-1/2,1/2]^n$.
\epf

More generally, an improved estimate for the slice-cones holds depending
on the dimension of the slices.

\bprop\lb{secondconeprop}
Assume
that $\sigma(v)\le a<b\le\cS_0(v)$ and that if $a=\sigma(v)$ then 
$\dim \Delta\cap p_1^{-1}(a)=n-1$. 
Consider a sequence $\{\ell_k\}$ with $\NN\ni\ell_k\le k$.
Suppose that $t\in [a, b-\frac{\ell_k}k(b-a)]$. Then for some $C_1,C_2>0$
independent of $k$ and $\ell_k$, 
$$
\#\left(\cC_v^{a,b}(t)\cap \ZZ^n/k\right)\ge C_2k^{\iota}\ell_k^{n-\iota},
\q \hbox{\ for all $k\ge\ell_k>C_1$},
$$
where
$
\iota:=\dim \Delta\cap p_1^{-1}(b).
$
\eprop
Note that by convexity of $\Delta$, $\iota=n-1$ for $b<\cS_0$, while
for $b=\cS_0$, $\iota$ can be any integer in the range $\{0,\ldots,n-1\}$.
Thus, Proposition \ref{firstconeprop} follows from
Proposition \ref{secondconeprop} by noting that since $\ell_k\le k$,
the estimate with $\iota=0$ always holds (as $\Delta\cap p_1^{-1}(\cS_0(v))\not=\emptyset$
as $\Delta$ is closed). The point of Proposition \ref{secondconeprop} is that the larger
the dimension of the slice above $\cS_0(v)$ the better the estimate. 

\subsection{Proof of Theorem \ref{maxp1Thm}}

\bcor
\lb{maxp1Cor}
Recall \eqref{p1plusbodyEq}.
Then, $
\#\big(\Delta^{\max_{\Delta_k} p_1+}\cap \ZZ^n/k\big)
\ge C (\max_{\Delta} p_1 - \max_{\Delta_k} p_1)^{n-\iota}k^n,
$
where $\iota:=\dim \Delta\cap p_1^{-1}(\max_\Delta p_1)$. 
\ecor

\bpf
The result follows from Proposition \ref{secondconeprop} for 
$a=(\sigma(v)+\cS_0(v))/2$,
$b=\cS_0(v)$, and 
$
\ell_k=\left\lceil 2k\frac
{\max_{\Delta}p_1-\max_{\Delta_k}p_1}
{\max_{\Delta}p_1-\min_{\Delta}p_1}
\right\rceil-1$. Indeed, 
for these choices, there is some $\eps_k>0$ such that
$\cC_v^{a,b}(t)\subset\Delta^{\max_{\Delta_k} p_1+\eps_k}$
for $t=b-\frac{\ell_k}k(b-a)$.
\epf

\bpf[Proof of Theorem \ref{maxp1Thm}]
By \eqref{TrivialLatticeCountEq}
 and Lemma \ref{DkdkLem},
$$
\#\big(\Delta^{\max_{\Delta_k} p_1+}\cap \ZZ^n/k\big)\le 
\#(\Delta\cap \ZZ^n/k)-\#(\Delta_k)=D_k-d_k\le C k^{n-1}.
$$
Theorem \ref{maxp1Thm} now follows from Corollary \ref{maxp1Cor}
by plugging-in the worst scenario $\iota=0$. Equivalently, 
Theorem \ref{maxp1Thm} follows from Proposition \ref{firstconeprop}
that treats only the case $\iota=0$, together with the same
argument as in the proof of Corollary \ref{maxp1Cor}.
\epf

\subsection{Joint asymptotics of \texorpdfstring{$S_{k,m}$: collapsing case}{S{k,mk}}}

In this section we prove Theorem \ref{SdoubleasymCor} in the collapsing case.

\begin{proof}[Proof of Theorem \ref{SdoubleasymCor} when $\tau=0$]
    By \eqref{first est} and the monotonicity 
    of the function $i$ (Definition \ref{upper S}),
    \begin{equation}\label{1st lower bd 0}
        S_{k,m_k}\left(v\right)\geq\frac{1}{km_k}\sum_{\ell=1+D_k-d_k}^{m_k+D_k-d_k}i_{k,\ell}\left(v\right)\geq\frac{1}{k}i_{k,m_k+D_k-d_k}\left(v\right).
    \end{equation}
    Recall Proposition \ref{firstconeprop} for definition of $C_1$ and $C_2$. 
    Since $\tau=0$ and using Lemma \ref{DkdkLem}, $m_k+D_k-d_k=o(k^n)$. Thus, there exists $C_3>0$ such that for $k\geq C_3$,
    $
        m_k+D_k-d_k\leq C_2k^n.
    $
    Consider an integer-valued function of $k$,
    \begin{equation}\label{k' def}
        \ell_k:=\max\left\{\left\lfloor C_1\right\rfloor+1,\left\lceil\left(\frac{m_k+D_k-d_k}{C_2}\right)^\frac{1}{n}\right\rceil\right\}.
    \end{equation}
    Note $\ell_k=o(k)$.
    When $k\geq\max\{\lfloor C_1\rfloor+1,C_3\}$, $k\geq \ell_k>C_1$. Let
    \begin{equation}\label{def t}
        {t}_{k,\ell_k}:=\cS_0\left(v\right)-\frac{\ell_k}{k}\left(\cS_0\left(v\right)-a\right),
    \end{equation}
    with $a$ and $V$ as in Proposition \ref{firstconeprop}, so that
    $$
        \#\left(\cC_{v}^{a,\cS_0(v),V}({t}_{k,\ell_k})\cap\ZZ^n/k\right)\ge
        C_2\ell_k^n\geq m_k+D_k-d_k.
    $$
Hence, by \eqref{bk def},
    \begin{align}
        \frac{1}{k}i_{k,m_k+D_k-d_k}\left(v\right)&=\max\left\{t\geq0\,:\,\#\left(\Delta\cap p_1^{-1}\left(\left[t,\cS_0\left(v\right)\right]\right)\cap\ZZ^n/k\right)\geq m_k+D_k-d_k\right\}\nonumber\\
        &\geq\max\left\{t\geq0\,:\,\#\left(\cC_{v}^{a,\cS_0(v),V}({t})\cap\ZZ^n/k\right)\geq m_k+D_k-d_k\right\}
        \geq{t}_{k,\ell_k}.\label{2nd lower bd 0}
    \end{align}
    Since by \eqref{k' def},
    \beq\lb{replaceellandnEq}
        \ell_k\leq C_4\left(m_k+D_k-d_k\right)^\frac{1}{n}\leq C_5\left(\max\left\{m_k,k^{n-1}\right\}\right)^\frac{1}{n},
    \eeq
    by \eqref{def t},
    \begin{equation}\label{3rd lower bd 0}
        {t}_{k,\ell_k}\geq\cS_0\left(v\right)\left(1-C_6\left(\max\left\{\frac{m_k}{k^n},\frac{1}{k}\right\}\right)^\frac{1}{n}\right)\geq\cS_0\left(v\right)\left(1-C_7\left(\max\left\{\frac{m_k}{d_k},\frac{1}{k}\right\}\right)^\frac{1}{n}\right).
    \end{equation}
    Equation \eqref{Case2} now follows from \eqref{1st lower bd 0}, \eqref{2nd lower bd 0}, and \eqref{3rd lower bd 0}.
\end{proof}
Using
Proposition \ref{secondconeprop}
instead of Proposition \ref{firstconeprop} in the proof above yields the improved estimate
$\ell_k\leq C_4\left(m_k+D_k-d_k\right)^\frac{1}{n-\iota}\leq C_5\left(\max\left\{m_k,k^{n-1}\right\}\right)^\frac{1}{n-\iota},
$
instead of 
\eqref{replaceellandnEq}. Consequently, 
Theorem \ref{SdoubleasymCor} in the case $\tau=0$ can be improved to
    $$
    S_{k,m_k}\left(v\right)\geq
        \left(1-C\left(\max\left\{\frac{m_k}{d_k},\frac{1}{k}\right\}\right)^\frac{1}{n-\iota}\right)\cS_\tau\left(v\right).
        $$

\subsection{Proof of Theorems \ref{MainAlphaThm}
and \ref{DeltatauThm}}
\lb{ProofofTwoThmSubSec}

We prove Theorem \ref{delta main} which (together with
Corollary \ref{BirkarCor}) implies Theorems \ref{MainAlphaThm} and \ref{DeltatauThm}.

\begin{definition}
\lb{computeddivisorialDef}
    Recall Corollary \ref{deltadoubleasymCor}. For a fixed $\tau\in[0,1]$, say $\bm{\delta}_\tau$ {\it is computed by a divisorial valuation} if the infimum in \eqref{delta_tau alg} is attained in $\ValXdiv$. In particular, $\alpha$ ($\delta$, respectively) is computed by a divisorial valuation if 
    for $\tau=0$ ($\tau=1$, respectively) the infimum in \eqref{delta_tau alg} is attained in $\ValXdiv$.
\end{definition}

Theorems \ref{MainAlphaThm} and \ref{DeltatauThm} are a consequence of
the following result.

\begin{theorem}\label{delta main}
    Suppose $m_k\in\{1,\ldots,d_k\}$ for $k\in\NN(L)$, and
    $
        \lim_{k\to\infty}\frac{m_k}{d_k}=\tau\in\left[0,1\right].
    $
    If $\bm{\delta}_\tau$ is computed by a divisorial valuation, then there exists $C>0$ depending on $L$, $\tau$, and $\{m_k\}$ (independent of $k$) such that
    $$
        \left(1-\frac{C}{k}\right)\min\left\{\frac{m_k}{\tau d_k},1\right\}\bm{\delta}_\tau\leq\delta_{k,m_k}\leq\begin{cases}
            \left(1+C\left(\max\left\{\frac{m_k}{d_k},\frac{1}{k}\right\}\right)^\frac{1}{n}\right)\bm{\delta}_\tau,&\tau=0;\\
            \left(1+\frac{C}{k}\right)\max\left\{\frac{m_k}{\tau d_k},1\right\}\bm{\delta}_\tau,&\tau\in\left(0,1\right].
        \end{cases}
    $$
    If $\alpha$ is computed by a divisorial valuation, 
    $
        \alpha_k=\alpha+O\left(k^{-\frac{1}{n}}\right).
    $
    If $\delta$ is computed by a divisorial valuation, 
    $
        \delta_k=\delta+O\left(k^{-1}\right).
    $
\end{theorem}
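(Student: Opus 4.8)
The plan is to assemble Theorem~\ref{delta main} from estimates already in hand: the lower bound in the displayed inequality is verbatim the final assertion of Corollary~\ref{deltadoubleasymCor}, which holds with no divisoriality hypothesis, so the only work lies in the upper bound. First I would use the hypothesis: by Definition~\ref{computeddivisorialDef} and \eqref{delta_tau alg} there is a valuation $v_0\in\ValXdiv$ with $\bm{\delta}_\tau=A(v_0)/\cS_\tau(v_0)$. Then Lemma~\ref{deltakminfProp} gives $\delta_{k,m_k}\le A(v_0)/S_{k,m_k}(v_0)$, so the whole problem reduces to a lower bound for $S_{k,m_k}(v_0)$ — which is exactly Theorem~\ref{SdoubleasymCor}. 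This is also the unique point at which divisoriality is genuinely used, Theorem~\ref{SdoubleasymCor} being available only on $\ValXdiv$.

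Next I would substitute the two cases of Theorem~\ref{SdoubleasymCor}. For $\tau\in(0,1]$ it gives $S_{k,m_k}(v_0)\ge(1-C/k)\min\{\tau d_k/m_k,1\}\,\cS_\tau(v_0)$; dividing $A(v_0)$ by this, using $\min\{\tau d_k/m_k,1\}^{-1}=\max\{m_k/(\tau d_k),1\}$ and $(1-C/k)^{-1}\le1+2C/k$ for $k$ large, produces the claimed $(1+C'/k)\max\{m_k/(\tau d_k),1\}\bm{\delta}_\tau$. For $\tau=0$ it gives $S_{k,m_k}(v_0)\ge\big(1-C(\max\{m_k/d_k,1/k\})^{1/n}\big)\cS_0(v_0)$, and the same reciprocal manipulation (legitimate since $\max\{m_k/d_k,1/k\}\to0$) yields $(1+C'(\max\{m_k/d_k,1/k\})^{1/n})\bm{\delta}_0$. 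The only care needed is routine bookkeeping: passing from reciprocal inequalities to the stated multiplicative normalization, and absorbing the finitely many small $k$ into a larger $C$, which is allowed because $L$ big forces $R_k\ne0$ and hence $\delta_{k,m_k}<\infty$; the resulting constant depends only on $L$, $\tau$, $\{m_k\}$, and the chosen $v_0$.

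Finally I would read off the two named asymptotics. Taking $m_k\equiv1$ gives $\tau=\lim_k 1/d_k=0$, $\delta_{k,1}=\alpha_k$, and $\bm{\delta}_0=\alpha$ by Corollary~\ref{deltadoubleasymCor}; moreover ``$\alpha$ is computed by a divisorial valuation'' is, by Definition~\ref{computeddivisorialDef}, the same as ``$\bm{\delta}_0$ is computed by a divisorial valuation.'' Since $d_k\to\infty$ one has $\max\{1/d_k,1/k\}=O(1/k)$, so the upper bound reads $\alpha_k\le\alpha+O(k^{-1/n})$ while the lower bound reads $\alpha_k\ge(1-C/k)\alpha$ (the $\min$ being $1$ since $\tau=0$), whence $\alpha_k=\alpha+O(k^{-1/n})$. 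Taking $m_k\equiv d_k$ gives $\tau=1$, $\delta_{k,d_k}=\delta_k$, $\bm{\delta}_1=\delta$, and both the $\max$ and the $\min$ collapse to $1$, so $(1-C/k)\delta\le\delta_k\le(1+C/k)\delta$, i.e.\ $\delta_k=\delta+O(k^{-1})$.

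I do not anticipate a real obstacle: the analytic substance — above all the collapsing regime $\tau=0$, which is precisely the source of the weaker $k^{-1/n}$ rate for $\alpha_k$ — is already carried by Theorem~\ref{SdoubleasymCor}, and the non-collapsing lower bound by Corollary~\ref{deltadoubleasymCor}. If anything is delicate it is merely keeping the $\max$/$\min$ normalizations and the constant-dependence consistent across the $\tau=0$ and $\tau>0$ regimes.
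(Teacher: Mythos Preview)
Your proposal is correct and follows exactly the paper's approach: the lower bound is taken directly from Corollary~\ref{deltadoubleasymCor}, while for the upper bound one fixes a divisorial $v_0$ computing $\bm{\delta}_\tau$, uses Lemma~\ref{deltakminfProp} to get $\delta_{k,m_k}\le A(v_0)/S_{k,m_k}(v_0)=\big(\cS_\tau(v_0)/S_{k,m_k}(v_0)\big)\bm{\delta}_\tau$, and then invokes Theorem~\ref{SdoubleasymCor}. Your added detail on reciprocals, absorption of small $k$, and the $\alpha_k$/$\delta_k$ specializations is all sound and merely spells out what the paper leaves implicit.
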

\begin{proof}
Suppose $v\in\ValXdiv$ computes $\bm{\delta}_\tau$. 
By Lemma \ref{deltakminfProp},
    $
        \delta_{k,m_k}\leq\frac{A\left(v\right)}{S_{k,m_k}\left(v\right)}=\frac{\cS_\tau\left(v\right)}{S_{k,m_k}\left(v\right)}\bm{\delta}_\tau.
    $
The theorem now follows from 
Corollary \ref{deltadoubleasymCor} and Theorem \ref{SdoubleasymCor}.
\end{proof}

\bigskip
\textsc{University of Maryland}\ \ \ 
{\tt cjin123@terpmail.umd.edu, yanir@alum.mit.edu}

\smallskip
\textsc{Peking University}\ \ \ 
{\tt gtian@math.pku.edu.cn}

\end{document}